\numberwithin{equation}{section}
\newcommand{\R}{\mathbb{R}}
\newcommand{\LL}{\mathcal{L}}
\newcommand{\OO}{\mathcal{O}}
\newcommand{\pp}{\partial}
\newcommand{\A}{\alpha}
\newcommand{\D}{\delta}
\newcommand{\ep}{\varepsilon}
\newcommand{\rdiv}{{\rm div}}
\newcommand{\rdist}{{\rm dist}}
\newcommand{\bfs}{{\bf s}}
\newcommand{\bft}{{\bf t}}
\newcommand{\zzeta}{{\boldsymbol  \zeta}}
\DeclareMathOperator{\uni}{\cup}
\newtheorem{lemma}{Lemma}[section]
\newtheorem{prop}{Proposition}[section]
\newtheorem{theo}{Theorem}
\newtheorem{corol}{Corollary}[section]
\title[\resizebox{5.6in}{!}{Boundary concentration phenomena for  the higher-dimensional Keller-Segel system}]{Boundary concentration phenomena for  the higher-dimensional Keller-Segel system}
\author[O. Agudelo]{Oscar Agudelo}
\address{\noindent O. Agudelo - NTIS Department of Mathematics,
Z\'{a}pado\v{c}esk\'{a} Univerzita v Plzni}
\email{oiagudel@ntis.zcu.cz}
\author[A. Pistoia]{Angela Pistoia}
\address{\noindent A. Pistoia - Dipartimento di Scienze Base e Applicate  La Sapienza Universit\'{a} di Roma.}
\email{angela.pistoia@uniroma1.it}
\begin{document}
\subjclass[2010]{35J60 (primary), and 35B33, 35J20 (secondary)}
\keywords{Keller-Segel system,  boundary and interior concentration layer}

\begin{abstract}
We study the existence of  steady states to the  Keller-Segel
 system   with linear chemotactical sensitivity function on a smooth bounded domain in $\R^N,$ $N\ge3,$ having rotational symmetry. 
 We find three different types of   chemoattractant concentration which  concentrate along   suitable $(N-2)-$dimensional minimal submanifolds of the boundary.
The corresponding density of the cellular slime molds 
 exhibit in the limit one or more Dirac measures supported on those boundary submanifolds.
\end{abstract}
\maketitle
\tableofcontents

\section{Introduction}

In 1970 Keller and Segel \cite{KELLERSEGEL} presented a system of two strongly coupled parabolic PDE's to describe the aggregation of cellular slime molds like Dictyostelium Discoidem. Assuming the whole process to take place on a suitable bounded region $D$ in $\R^N,$ $N\ge1$,
with no flux across the boundary, the myxoamoebae density of the cellular slime molds $w(t,x)$ and the chemoattractant concentration $v(t,x)$ at time $t$ in a point $x$ in $D$ satisfy the system
\begin{equation}\label{ks}
\left\{\begin{aligned}
&\partial_t w=\nabla\left[\mu(w,v)\nabla w-\chi(w,v)\nabla v\right]\ \hbox{in}\ \R\times D\\
&\partial_t v=\gamma_0\Delta v+k(w,v)\ \hbox{in}\ \R\times D\\
&\partial_\nu w=\partial_\nu v=0\ \hbox{in}\ \R\times\partial D\\
&w(0,x)=w_0(x),\ v(0,x)=v_0(x)\ \hbox{in}\  D,
\end{aligned}
\right.
\end{equation}
where
\begin{itemize}
\item $\mu(w,v)$ is the random motility coefficient
\item $\chi(w,v)=\chi_0\mu(w,v)w \nabla \Phi(v)$ is the total chemotaxic flux, where $\chi_0$ 
is a   constant and $\Phi$ is a smooth increasing function called {\it chemotactic sensitivity function}
\item $\gamma_0>0$ is a constant diffusion coefficient
\item $k(w,v)$ models the reaction, which   commonly is $k(w,v)=\gamma_0(\alpha w-\beta v) $ for some constants $\alpha$ and $\beta$.  
\item $\nu$ is the unit inner normal derivative at the boundary.
\end{itemize}

This model has attracted the attention of many mathematicians, since it has led to a
variety of stimulating problems.
Many contributions have been made towards understanding analytical aspects of system \eqref{ks}. We refer the reader to \cite{BILER,BRENNERCONSTANTINKADANOFFSCHENKELVENKATARAMANI,CHILDRESS,CORRIASPERTHAMEZAAQ,DOLBEAULTPERTHAME,GUERRAPELETIER,HERREROVELAZQUEZ1,HERREROVELAZQUEZ2,HERREROVELAZQUEZ3,HORSTMANN,JAGERWAND} and references therein. In particular, we quote  the recent  survey \cite{BBTW} which  focuses on the origi\-nal model and some of its developments and is devoted to the qualitative analysis of analytic problems, such as the existence of solutions, blow-up and asymptotic behavior. \\

The understanding of the global dynamics of the system is strongly related to
the existence of steady states, namely solutions to the system
\begin{equation}\label{kss} 
\left\{\begin{aligned}
& \nabla\left[\mu(w,v)\nabla w-\chi(w,v)\nabla v\right]=0\ \hbox{in}\   D\\
& \gamma_0\Delta v+k(w,v)=0\ \hbox{in}\   D\\
&\partial_\nu w=\partial_\nu v=0\ \hbox{in}\  \partial D.\\
 \end{aligned}
\right.
\end{equation}

The present paper deals with the existence of stationary solutions to the problem \eqref{ks} when the chemotactical sensitivity function $\Phi$ is linear, i.e.
$\Phi(v)=v$,.
In such a case the study of \eqref{kss} reduces to a single equation. Indeed, the function $w(x)=\lambda e^{\chi_0 v}$ solves the first equation and the second equation reduces to
\begin{equation}\label{lin-sen}
\Delta v+\alpha \lambda e^{\chi_0 v}-\beta v=0\ \hbox{in}\  D,\quad \partial_\nu v=0\ \hbox{on}\ \partial D.\end{equation}

Here we study problem \eqref{lin-sen}, which without loss of generality can  be rewritten as
\begin{equation}\label{p}
\Delta v+  \ep^2 e^{  v}-  v=0\ \hbox{in}\  D,\quad \partial_\nu v=0\ \hbox{on}\ \partial D.\end{equation}
when $ D$ is  a bounded domain in $\R^N,$ $N\ge1$, $\nu$ is the inner unit normal vector to $\pp \Omega$
and $\ep$ is a small  parameter.\\

The one dimensional version of equation \eqref{p}, was first treated by Schaaf in \cite{SCHAAF}.  In the two-dimensional case,   Wang and Wei \cite{WANGWEI} and Senba and Suzuki \cite{SENBASUZUKI} proved the existence of a non-costant solution for $\ep\le \ep_0$ for some $\ep_0$ and $\ep$ possibly different from certain values depending on the domain. 
Successively, del Pino and Wei in \cite{DELPINOWEI} constructed   solutions to \eqref{p} which concentrate (as $\ep$ goes to zero) at $\kappa$ different points $\xi_1,\dots,\xi_\kappa$ on the boundary of $ D$ and $\ell$ different points $\xi_{\kappa+1},\dots,\xi_{\kappa+\ell}$ inside the domain $ D.$ In particular
far away from those points the leading behavior of $v_\ep$ is given by
\begin{equation}\label{kk}
v_\ep(x) \to  \sum\limits_{i=1}^\kappa {1\over 2} G(x,\xi_i)  + \sum\limits_{i=1}^\ell  G(x, \xi_{i})
\end{equation}
where $G(\cdot,\xi)$ is the Green function for the problem
\begin{equation}
\left\{
\begin{aligned}
&-\Delta G + G = 8\pi \delta_\xi, \  \mbox{in}\   D,\\
& \frac{\partial G}{\partial\nu}=0\ \mbox{on}\ \partial D.
\end{aligned}
\right.
\end{equation}
 Here $\D_   {\zzeta}$ represents the   Dirac's mass concentrated at the point $\zeta$.
The corresponding solution $u_\ep(x)=\ep^2e^{v_\ep}$ of the first equation in \eqref{kss} exhibits, in the limit, $\kappa$ Dirac measures on the boundary of the domain and $\ell$ Dirac measures inside the domain with respectively weights $4\pi$ and $8\pi,$ namely
$$u_\ep\rightharpoonup \sum\limits_{i=1}^\kappa 4\pi\delta_{\xi_i}+\sum\limits_{i=1}^\ell 8\pi\delta_{\xi_{\kappa+i}}$$

Recently, Del Pino, Pistoia and Vaira in \cite{dpv} built a solution to problem \eqref{p} which concentrate along the whole boundary. In particular, far away from the boundary the leading behavior of $v_\ep$ is given by
$${1\over|\ln\ep|}v_\ep(x)\to\mathcal G(x)$$
where  $ \mathcal G$ is the unique solution of the problem
 \begin{equation}\label{G}
- \Delta  \mathcal G +  \mathcal G  =0 \hbox{ in }  D, \quad    \mathcal G = 1 \hbox{ on } \partial D.
\end{equation}

The corresponding solution $u_\ep=\ep^2e^{v_\ep}$ of the first equation in \eqref{kss} exhibits in the limit a Dirac measures supported on the boundary with a suitable weight, namely
$${1\over|\ln\ep|}u_\ep \rightharpoonup - \partial_\nu  \mathcal G\ \delta_{\partial D} .$$
($\partial_\nu  \mathcal G <0$, because of the maximum principle and Hopf's Lemma) .

As far as we know the only results dealing with higher-dimensional cases concerns the case when $D$ is a ball.  
Biler in \cite{BILER} established the existence of  a   strictly decreasing radial solutions, while Pistoia and Vaira in \cite{pv} found a second radial solution which is increasing and concentrates along the whole boundary of the domain as $\ep$ approaches zero. \\

Clearly, a natural  question arises.\\
 {\em Do there exist solutions to problem \eqref{p} in higher dimensional domains?
 In particular, is it possible to find solutions to problem \eqref{p} which concentrates on 
suitable submani\-folds of the boundary as the parameter $\ep$ approaches zero?}\\

In the present paper we give a positive answer when the domain has a rotational symmetry. 
Let $n=1,2,$ be fixed.   Let $\Omega$ be a smooth open bounded domain in $\R^2 $ such that
$$\overline {\Omega} \subset \{\left( x_{1},x_{2},x^{\prime
}\right) \in \mathbb{R}^{n}\times \mathbb{R}^{2-n}:x_{i}>0,\ i=1
,n\}.
$$
Let $M=\sum\limits_{i=1}^n M_i,$ $M_{i}\geq 2,$ and set
\begin{equation}\label{D}
{D}:=\{(y_{1},y_{n},x^{\prime })\in \mathbb{R}^{M_{1}}\times \mathbb{R}^{M_{n}}\times \mathbb{R}^{2-n}:\left( \left\vert
y_{1}\right\vert ,\left\vert y_{n}\right\vert ,x^{\prime }\right)
\in \Omega\}.
\end{equation}%
Then $ {D}$ is a smooth bounded domain in $\mathbb{R}^{N}$, $N:=M+2-n.$

The solutions we are looking for are $\Gamma-$invariant for the action of
the group $\Gamma:=\mathcal{O}(M_1)\times\mathcal{O}(M_n)$ on $\R^N$ given by
\begin{equation*}
(g_{1},g_{n})(y_{1},y_{n},x^{\prime }):=(g_{1}y_{1}
,g_{n}y_{n},x^{\prime }).
\end{equation*}%
Here $\mathcal{O}(M_i)$ denotes the group of linear isometries of $\R^{M_i}.$

A simple calculation shows that a function $v$ of the form 
\begin{equation}\label{simmetria}v(y_{1}
,y_{n},x^{\prime })=u\left( \left\vert y_{1}\right\vert ,\left\vert
y_{n}\right\vert ,x^{\prime }\right) \end{equation}
 solves problem \eqref{p} if and only
if $u$ solves the problem
\begin{equation*}
-\Delta u+\sum_{i=1}^{n}\frac{M_{i}-1}{x_{i}}\frac{\partial u}{\partial
x_{i}}+u=\ep^2 e^u\quad \text{in}\ \Omega,\qquad \partial_\nu u=0\quad \text{on}\
\partial \Omega,
\end{equation*}%
which can be rewritten as%
\begin{equation*}
-\text{div}(a(x)\nabla u)+a(x)u= \ep ^2 a(x)e^{u}\quad \text{in}\ \Omega%
 ,\qquad u=0\quad \text{on}\ \partial \Omega,
\end{equation*}%
where%
\begin{equation}\label{a}
a(x_{1},x_{n}):=x_{1}^{M_{1}-1}\cdot x_{n}^{M_{n}-1}.
\end{equation}
Thus, we are led to study the more general anisotropic equation
 
\begin{equation}\label{emdenfowlerhighdim}
-\rdiv(a(x)\nabla u) +a(x)u = \ep^2 a(x)e^{u}  \quad \hbox{in} \quad \Omega ,\quad \quad \frac{\pp u}{\pp \nu}=0 \quad \hbox{on} \quad \pp \Omega,
\end{equation}
where $\Omega\subset \R^2$ is a smooth bounded domain, $a: \overline{\Omega}\to \R$ is a strictly positive smooth function and $\ep>0$ is a small parameter. Here $\nu$ stands for the inner unit normal vector to $\pp \Omega$.

 Our goal is to construct solutions to problem \eqref{emdenfowlerhighdim} which concentrate at points $\zeta_1,\dots,\zeta_m$ of the boundary of $\Omega$ as $\epsilon$ goes to $0.$ They
correspond via \eqref{simmetria} to $\Gamma-$invariant solutions to  problem \eqref{p} with layers  which concentrate along  the  $\Gamma-$orbit $\Xi(\zeta_i)$ of $\zeta_i,$ for $i=1,\dots,m$
as  $\epsilon$ approaches zero. Here
\begin{equation}\label{orbit}
\Xi(\zeta_i):=\{(y_{1},y_{n},x^{\prime })\in \partial D:\left( \left\vert
y_{1}\right\vert ,\left\vert y_{n}\right\vert ,x^{\prime }\right)=\zeta_i
\in \partial\Omega\}
\end{equation}
is a $(N-2)-$dimensional  minimal submanifold  of the boundary of $D $
  diffeomorphic to $ \mathbb{S}^{M_1-1}\times\mathbb{S}^{M_n-1}  $
 where $\mathbb{S}^{M_i-1}$ is the unit sphere in $\R^{M_i} . $
\\\\
In order to state our main result, we need to introduce some tools.

The basic cell in our construction are the so-called {\em standard bubbles}
\begin{equation}\label{bubble}
U_{\mu,\zeta}(x):=\ln \left(8\mu^2\over(\mu^2+|x-\zeta|^2)^2\right),\ x,\zeta\in\R^2,\ \mu>0,
\end{equation}
which solve the Liouville equation
\begin{equation}\label{liu}
\Delta U+e^U=0\ \hbox{in}\ \R^2.
\end{equation}
To get a good approximation, we need to project the bubbles in order to fit the Neumann boundary condition with the linear
 differential operator
\begin{equation}\label{linearpartanisotropiceqn}
\LL_a u:= \Delta u+   \nabla (\ln a) \cdot \nabla u-u,
\end{equation}  
namely  
$$\LL_a PU_{\mu,\zeta}=\LL_a  U_{\mu,\zeta}\ \hbox{in}\ \Omega,\ \partial_\nu PU_{\mu,\zeta}=0\ \hbox{on}\ \pp\Omega.$$
To  compute the error given by the projected bubble, we need to perform
   a careful analysis of the regularity and asymptotic behavior of the Green's function $G_a(\cdot,\zeta)$ associate with 
 $\LL_a$ with Neumann boundary condition, namely  
\begin{equation} \label{greensfunctioneqn0}\left\{\begin{aligned}
&\LL_a \,G_a(\cdot,\zeta) + 8\pi\D_   {\zzeta}=0\quad \quad  \hbox{in}\quad \Omega,\\
&
\frac{\pp G_a(\cdot,\zeta)}{\pp \nu}=0\quad \quad  \hbox{on} \quad \pp \Omega,
\end{aligned}\right.\end{equation}
for every $\zeta\in \overline{\Omega}$.
The regular part of $G_a(\cdot,\zeta)$ is defined for $x\in \Omega$ as
\begin{equation}\label{regularpartGreen1}
H_a(x,\zeta):= 
\left\{
\begin{array}{ccc}
G_a(x,\zeta) + 4 \ln \left(|x-\zeta|\right),&\quad 
\zeta \in \Omega,\\
\\
G_a(x,\zeta) + 8 \ln\left(|x-\zeta|\right),&\quad \zeta\in \pp \Omega.
\end{array}
\right.
\end{equation}

Now, we can state our main results.
\medskip

Our first   result concerns with the existence of solutions   whose concentration points are inside the domain and   approaches different points of the boundary    as $\ep$ goes to zero.

\begin{theo}\label{theo2}
Assume 
\begin{itemize}
\item[(A1)] there exist $m$ different points $\zeta^*_1,\ldots,\zeta^*_m \in \pp \Omega$ such that $\zeta^*_i$ is either a strict local maximum or a strict local minimum   point of $a $ restricted to $\pp \Omega$, satisfying that 
 $$
\pp_{\nu}a(\zeta^*_i ):= \nabla a(\zeta^*_i )\cdot \nu(\zeta^*_i )<0, \quad \forall i=,\ldots,m.
$$\end{itemize}
Then if $\ep>0$ is small enough there exist $m$ points $\zeta^{\ep}_1,\ldots,\zeta^{\ep}_m \in \Omega$, $m$ positive real numbers $d^{\ep}_1,\ldots,d^{\ep}_m$ and a positive solution $u_{\ep}$ of equation \eqref{emdenfowlerhighdim} such that
\begin{equation}\label{profileof thesolutiontheo2}
u_{\ep}(x):=\sum_{i=1}^m \ln\left(\frac{1}{\left(  {d_i^\ep}^2 + |x-\zeta^{\ep}_i|^2\right)^2}\right)+ H_a(x,\zeta^{\ep}_i) + o(1), \quad \forall x \in \overline{\Omega},
\end{equation}
where as $\ep \to 0$ 
$$
\zeta^{\ep}_i \to \zeta^*_i, \quad \rdist(\zeta^{\ep}_i ,\pp \Omega)=\OO\left(|\ln \left({\ep}\right)|^{-1}\right)
$$
and
$$
c\ln \left(\frac{1}{\ep}\right) \leq d^{\ep}_i \leq C\ln \left(\frac{1}{\ep}\right).
$$
for some positive constants $c$ and $C.$\\

In particular, 
 {$$u_\ep(x) \to \sum\limits_{i=1}^m  G_a(x,\zeta^*_i)\ \hbox{in}\ \Omega\setminus\uni\limits_{i=1}^mB(\zeta_i^*,r)\ \hbox{as}\ \ep\to0  $$
for some $r>0$ and
$$\ep^2\int\limits_\Omega e^{u_\ep(x)}dx\to 8\pi m\ \hbox{as}\ \ep\to0.  $$}
\end{theo}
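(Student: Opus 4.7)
The plan is a standard Lyapunov--Schmidt finite-dimensional reduction based on projected Liouville bubbles, followed by a variational analysis of the resulting reduced functional in which assumption (A1) produces the required critical point.

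\textbf{Ansatz.} For $(\zeta_1,\dots,\zeta_m)$ with each $\zeta_i\in\Omega$ close to $\zeta_i^*\in\pp\Omega$ and positive scales $(d_1,\dots,d_m)$ of order $|\ln\ep|$, I would take as first approximation
$$W(x):=\sum_{i=1}^m PU_{\mu_i,\zeta_i}(x),$$
where $PU_{\mu,\zeta}$ is the projection of the standard bubble \eqref{bubble} onto the space satisfying the Neumann condition for $\LL_a$, and $\mu_i$ is linked to $d_i$ through the matching that makes $PU_{\mu_i,\zeta_i}\simeq G_a(\cdot,\zeta_i)$ away from $\zeta_i$. Using the regularity structure \eqref{regularpartGreen1} of $G_a$, together with the image singularity developed by $G_a(\cdot,\zeta_i)$ across $\pp\Omega$ when $\zeta_i$ is near the boundary, one then obtains a pointwise expansion of $PU_{\mu_i,\zeta_i}$ matching the profile \eqref{profileof thesolutiontheo2}.

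\textbf{Linear and nonlinear theory.} Writing $u=W+\phi$ and inserting into \eqref{emdenfowlerhighdim}, I would study the projected problem
$$\LL_a\phi+\ep^2 e^W\phi=-R_\ep+N(\phi)+\sum_{i,j}c_{ij}Z_{ij}\ \hbox{in}\ \Omega,\qquad \pp_\nu\phi=0 \text{ on }\pp\Omega,$$
where $R_\ep=\LL_a W+\ep^2 e^W$ is the error of the ansatz, $N(\phi)=\ep^2 e^W(e^\phi-1-\phi)$, and for each $i$ the $Z_{ij}$, $j=0,1,2$, are the dilation $\pp_\mu U_{\mu_i,\zeta_i}$ and the two translations $\pp_{\zeta^j}U_{\mu_i,\zeta_i}$, which together form an approximate kernel at each bubble. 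The nondegeneracy of the Liouville bubble in $\R^2$ and a standard contradiction/blow-up argument give uniform a priori bounds in suitable weighted $L^\infty$ norms, after which a contraction argument produces a unique small $\phi=\phi_\ep(d,\zeta)$ depending smoothly on the parameters.

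\textbf{Reduction and critical point.} A solution of \eqref{emdenfowlerhighdim} amounts to killing all $c_{ij}$, equivalently to finding a critical point of the reduced functional $F_\ep(d,\zeta):=J_\ep(W+\phi_\ep)$ associated with the natural variational formulation of the equation. Using step~2 estimates,
$$F_\ep(d,\zeta)=c_0+c_1\sum_{i=1}^m\Phi_\ep(d_i,\zeta_i)+\text{(interaction terms)}+o(1),$$
where each $\Phi_\ep$ combines multiples of $a(\zeta_i)$, of $H_a(\zeta_i,\zeta_i)$, and of $\ln(\ep d_i)$; the interactions are negligible since the $\zeta_i^*$ are distinct. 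Optimizing in $d_i$ determines $d_i^\ep\sim\ln(1/\ep)$; the residual $\zeta_i$-functional then has leading behaviour driven by $a(\zeta_i)$ (favouring extrema of $a|_{\pp\Omega}$) plus a normal-derivative term whose sign is set by $\pp_\nu a(\zeta_i)$. Assumption (A1) therefore gives a local extremum of the reduced functional in a small box around each $\zeta_i^*$, with $\rdist(\zeta_i^\ep,\pp\Omega)=O(|\ln\ep|^{-1})$. The mass claim $\ep^2\int_\Omega e^{u_\ep}\to 8\pi m$ and the Green-function limit $u_\ep\to\sum G_a(\cdot,\zeta_i^*)$ then follow from the Liouville normalization and the ansatz structure, while positivity of $u_\ep$ comes from the size of $W$ near the concentration points.

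\textbf{Main difficulty.} The delicate ingredient is the expansion of $F_\ep$, specifically the asymptotic behaviour of $H_a(\zeta,\zeta)$ as $\zeta\to\pp\Omega$: the image singularity in $G_a(\cdot,\zeta)$ contributes an additional $-4\ln|\zeta-\zeta^*|$ term whose divergence must be balanced against the $\ln(\ep d_i)$ contribution. This balance simultaneously fixes the $|\ln\ep|^{-1}$ scale of $\rdist(\zeta_i^\ep,\pp\Omega)$ and the $\ln(1/\ep)$ size of $d_i^\ep$, and singles out $\pp_\nu a(\zeta_i^*)<0$ as the correct sign condition producing a genuine critical point in the normal direction.
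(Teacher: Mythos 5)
Your overall architecture coincides with the paper's: projected bubbles as ansatz, a linear theory with orthogonality conditions, a contraction mapping for the nonlinear projected problem, and an expansion of the reduced energy whose leading terms are $8\pi|\ln\ep|\,a(\zeta_i)$ together with the Robin-function divergence $-4\ln\rdist(\zeta_i,\pp\Omega)$ balanced against $\ln(\ep d_i)$; you correctly identify this balance as the mechanism fixing $\rdist(\zeta_i^\ep,\pp\Omega)\sim|\ln\ep|^{-1}$ and $d_i^\ep\sim\ln(1/\ep)$ and singling out $\pp_\nu a(\zeta_i^*)<0$. One bookkeeping difference: the paper does not carry the dilation mode $Z_{i0}$ in the projected problem but removes that degeneracy beforehand by imposing the algebraic compatibility condition \eqref{logconditiondi} on the $d_i$'s, whereas you keep $d_i$ as a free reduced parameter and optimize afterwards; both routes are workable.

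The one genuine gap is the final critical-point step. Your claim that (A1) ``gives a local extremum of the reduced functional in a small box around each $\zeta_i^*$'' fails when $\zeta_i^*$ is a strict local \emph{minimum} of $a$ restricted to $\pp\Omega$, which (A1) allows. Writing $\zeta_i=s_i+|\ln\ep|^{-1}t_i\,\nu(s_i)$ as in \eqref{zetai}, the reduced energy has the form \eqref{cruciale}: the tangential part is $8\pi|\ln\ep|\,a(s_i)+o(|\ln\ep|)$, while the normal part $16\pi\left[a(s_i)\ln t_i+t_i\pp_\nu a(s_i)\right]$ has, under $\pp_\nu a<0$, a unique interior \emph{maximizer} $t_i=-a(s_i)/\pp_\nu a(s_i)$. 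Hence at a local minimum of $a|_{\pp\Omega}$ the candidate critical point is a saddle (minimum in $s_i$, maximum in $t_i$), and no extremization over a small box detects it; moreover, a $C^0$-stable extremum of the leading term does not by itself persist under the $o(1)$ perturbation coming from $\tilde\theta_\ep$. This is precisely why the paper establishes the expansion $C^1$-uniformly and then runs a Brouwer degree argument: $\mathrm{deg}(\nabla a,B(s_i^*,\rho),0)=\pm1$ at a strict local max or min, composed with the nondegenerate zero of $\mathcal T_i$ in the $t_i$ variable, yields a nonzero total degree and hence a critical point of $\mathcal F_\ep$. To complete your proof you need either this degree argument or a min-max/linking scheme, and in either case $C^1$ (not merely $C^0$) control of the error in the reduced energy.
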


Our second  result concerns with the existence of solutions whose concentration points lie on the boundary    and are far away from each other as $\ep$ goes to zero.

\begin{theo}\label{theo3} 
Assume \begin{itemize}
\item[(A2)] there exist $m$ different points $\zeta^*_1,\ldots,\zeta^*_m\in \pp \Omega$ such that $\zeta^*_i$ is either a strict local maximum or a strict local minimum  point of $a $ restricted to $\pp \Omega.$
\end{itemize}
Then if
 $\ep>0$ is small enough there exist $m$ points $\zeta^{\ep}_1,\ldots,\zeta^{\ep}_m \in \pp \Omega$, $m$ positive real numbers $d^{\ep}_1,\ldots,d^{\ep}_m$ and a positive solution $u_{\ep}$ of equation \eqref{emdenfowlerhighdim} such that
\begin{equation}\label{profileof thesolutiontheo3}
u_{\ep}(x):=\sum_{i=1}^m \ln\left(\frac{1}{\left(  d_i^2 + |x-\zeta^{\ep}_i|^2\right)^2}\right)+ \frac{1}{2}H_a(x,\zeta^{\ep}_i) + o(1), \quad \forall x \in \overline{\Omega},
\end{equation}
where as $\ep \to 0$,
$$
\zeta^{\ep}_i \to \zeta_i^* \hbox{as}\ \ep\to0   \quad \hbox{and} \quad 
c \leq d^{\ep}_i \leq C,
$$
for some positive constants $c$ and $C.$\\

 In particular, 
 {$$u_\ep(x) \to \sum\limits_{i=1}^m  {1\over2} G_a(x,\zeta^*_i)\ \hbox{in}\  \Omega\setminus\uni\limits_{i=1}^mB(\zeta_i^*,r)\ \hbox{as}\ \ep\to0  $$
for some $r>0$ and
$$\ep^2\int\limits_\Omega e^{u_\ep(x)}dx\to 4\pi m\ \hbox{as}\ \ep\to0.  $$}

\end{theo}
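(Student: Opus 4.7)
The proof will follow a Lyapunov--Schmidt reduction scheme adapted to the anisotropic Neumann equation \eqref{emdenfowlerhighdim}, tailored to $m$ bubbles sitting on $\pp\Omega$; the key structural point is that a single projected bubble based at a boundary point reproduces only \emph{half} of the Green's function, matching the coefficient $\tfrac12$ in \eqref{profileof thesolutiontheo3}.

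\textbf{Ansatz and error.} I would first choose parameters $\zeta=(\zeta_1,\dots,\zeta_m)\in(\pp\Omega)^m$ near $(\zeta_1^*,\dots,\zeta_m^*)$ and $d=(d_1,\dots,d_m)\in[c,C]^m$, set $\mu_i=\ep d_i$, and take as first approximation
\[
V_\ep(x):=\sum_{i=1}^m P\!U_{\mu_i,\zeta_i}(x).
\]
The decay analysis of the Green's function from \eqref{greensfunctioneqn0}, \eqref{regularpartGreen1} shows that $P\!U_{\mu_i,\zeta_i}(x)=\tfrac12 G_a(x,\zeta_i)+\ln(8\mu_i^2)+\OO(\mu_i)$ outside a fixed neighborhood of $\zeta_i$, the $\tfrac12$ reflecting the boundary doubling $8\ln|\cdot|$ in \eqref{regularpartGreen1}. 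Writing the equation as $\LL_a u+\ep^2 e^{u}=0$ and decomposing
\[
S(V_\ep)=\LL_a V_\ep+\ep^2 e^{V_\ep}=\sum_{i=1}^m\bigl(\LL_a P\!U_{\mu_i,\zeta_i}+e^{U_{\mu_i,\zeta_i}}\bigr)+\Bigl(\ep^2 e^{V_\ep}-\sum_{i=1}^m e^{U_{\mu_i,\zeta_i}}\Bigr),
\]
identity \eqref{liu} together with the scaling $\mu_i=\ep d_i$ and the uniform separation of the $\zeta_i$ will yield $\|S(V_\ep)\|_{**}=\OO(\ep|\ln\ep|)$ in a weighted norm concentrated on the bubbles.

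\textbf{Linear theory and reduction.} The plan is then to linearize, $L_\ep\phi:=\LL_a\phi+\ep^2 e^{V_\ep}\phi$, and for each $i$ to introduce the two functions $\{Z_{ij}\}_{j=0,1}$ that survive the Liouville kernel after flattening $\pp\Omega$ near $\zeta_i$ and imposing the Neumann condition (one dilation mode, one tangential translation). The projected linear problem
\[
L_\ep\phi=h+\sum_{i,j}c_{ij}\,e^{U_{\mu_i,\zeta_i}}Z_{ij},\qquad \int_\Omega e^{U_{\mu_i,\zeta_i}}Z_{ij}\phi\,dx=0,
\]
will be solved with the a priori bound $\|\phi\|_*\le C|\ln\ep|\,\|h\|_{**}$ through a blow-up/contradiction argument, exploiting the nondegeneracy of $U_{1,0}$ on $\R^2_+$ with Neumann boundary. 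Combined with a Banach fixed-point iteration on a ball $\{\|\phi\|_*\le K\ep|\ln\ep|^2\}$ and the error estimate above, this produces a unique small corrector $\phi=\phi(\zeta,d)$ and Lagrange multipliers $c_{ij}(\zeta,d)$ solving the projected nonlinear problem, reducing the existence of a genuine solution to making the $c_{ij}$ vanish.

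\textbf{Reduced energy and critical points.} True solutions correspond to critical points of the reduced energy $\J_\ep(\zeta,d):=J_\ep(V_\ep+\phi)$, with $J_\ep(u):=\tfrac12\int_\Omega a(|\nabla u|^2+u^2)\,dx-\ep^2\int_\Omega a\,e^{u}\,dx$. Using Steps~1--2 and expanding $J_\ep(V_\ep+\phi)=J_\ep(V_\ep)+\OO(\|\phi\|_*^2)$, a careful expansion yields
\[
\J_\ep(\zeta,d)=C_0 m|\ln\ep|+C_1\sum_{i=1}^m a(\zeta_i)+\sum_{i=1}^m a(\zeta_i)\,\Phi\bigl(d_i,H_a(\zeta_i,\zeta_i)\bigr)+\sum_{i\neq j}\sqrt{a(\zeta_i)a(\zeta_j)}\,G_a(\zeta_i,\zeta_j)+o(1),
\]
where $\Phi$ is explicit and strictly concave/convex in $d_i$ with a unique nondegenerate critical point $d_i^*\in(c,C)$ depending continuously on $\zeta_i$. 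Since the $\zeta$-dependence of the leading nontrivial order is driven by $\sum_i a(\zeta_i)$, assumption (A2) gives strict extremality at $(\zeta_1^*,\dots,\zeta_m^*)\in(\pp\Omega)^m$, and a local min--max argument on a small neighborhood of each $\zeta_i^*$ delivers critical points $\zeta_i^\ep\to\zeta_i^*$, $d_i^\ep\to d_i^*$ of $\J_\ep$; the separation of the $\zeta_i^*$ keeps the $G_a(\zeta_i,\zeta_j)$ cross-terms under control. The limit-measure statement $\ep^2\!\int_\Omega e^{u_\ep}\to 4\pi m$ then follows from $\int_{\R^2_+}e^{U_{1,0}}=4\pi$ and the bound on $\phi$.

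\textbf{Main obstacle.} The hardest step will be the uniform linear invertibility of $L_\ep$ on the orthogonal complement of the approximate kernel: one has to accommodate simultaneously the anisotropic drift $\nabla\ln a\cdot\nabla$, the Neumann condition, and $m$ boundary-concentrating bubbles. The blow-up analysis must separate the inner scales at each $\zeta_i$ from the outer regime, handle the flattening of $\pp\Omega$ in the presence of the weight $a$, and rule out nontrivial limits via the half-plane nondegeneracy of the Liouville equation.
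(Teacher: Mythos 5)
Your overall architecture (projected boundary bubbles, invertibility of the projected linearization, fixed point, reduced energy, extremality of $a$ at the $\zeta_i^*$) is the same as the paper's, and the identification of the factor $\tfrac12$ with the boundary normalization in \eqref{regularpartGreen1} is correct. There is, however, a genuine gap in your treatment of the dilation parameters. You take $d=(d_1,\dots,d_m)\in[c,C]^m$ as free parameters and claim $\|S(V_\ep)\|_{**}=\OO(\ep|\ln\ep|)$ uniformly in $d$. This is false: near $\zeta_i$ one has $\ep^2e^{V_\ep}=\ep^2e^{U_{\mu_i,\zeta_i}}\,e^{\Lambda_i+o(1)}$ with $\Lambda_i=-\ln(8d_i^2)+\tfrac12H_a(\zeta_i,\zeta_i)+\sum_{j\neq i}\tfrac12G_a(\zeta_i,\zeta_j)$, so the error carries a term $(e^{\Lambda_i}-1)\,\ep^2e^{U_{\mu_i,\zeta_i}}$ which is of order one in any bubble-weighted norm unless $\Lambda_i=0$. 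That is exactly the compatibility condition \eqref{logconditiondi}, which the paper imposes from the outset; it is what makes the error small (Lemma \ref{sizeof the errorSvep}), removes the $m$ dilation modes $z_{i0}$ from the approximate kernel (so only the tangential translations are projected out for boundary points), and leaves a reduced energy depending on $\zzeta$ alone. Without it, your fixed-point step does not produce a corrector of size $o(1)$, and the subsequent expansion $\J_\ep(V_\ep+\phi)=\J_\ep(V_\ep)+\OO(\|\phi\|_*^2)$ loses the precision you need.

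Relatedly, your claim that the reduced energy contains a function $\Phi(d_i,\cdot)$ that is strictly concave/convex in $d_i$ with an interior nondegenerate critical point is not something you can assert at the level of your expansion: the quadratic-gradient part of the energy is linear in $\ln d_i$ at leading order, and the compensating term comes only from $-\ep^2\int_\Omega a\,e^{V_\ep+\phi}$ computed with the $\OO(1)$ mismatch $e^{\Lambda_i}$ — a computation that itself presupposes control of $\phi$, i.e. the small-error estimate you have not yet secured. The clean way out (and the paper's) is to solve $\Lambda_i=0$ for $d_i$ as a function of $\zzeta$ before the reduction; then Proposition \ref{variational reduction} and \eqref{boundary} give $\mathcal F_\ep(\zzeta)=|\ln\ep|\bigl[8\pi\sum_{i=1}^m a(\zeta_i)+o(1)\bigr]$ $C^1$-uniformly on configurations with $|\zeta_i-\zeta_j|\ge c$, and the strict local extrema of $a$ on $\pp\Omega$ yield, by a degree (or stable critical value) argument, critical points $\zeta_i^\ep\to\zeta_i^*$. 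Your min--max step and the final mass computation $\ep^2\int_\Omega e^{u_\ep}\to 4\pi m$ are fine once this is repaired.
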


Our last existence result concerns with the existence of solutions whose concentration points lie on the boundary and collapse to the same point as $\ep$ goes to zero.

\medskip
\begin{theo}\label{theo4} 
Assume 
\begin{itemize}
\item[(A3)]   $\zeta_0 \in \pp \Omega$ is a strict local maximum point of $a $ restricted to $\pp \Omega$. 
\end{itemize}
 Then for any integer $m\geq 1$ if $\ep>0$ small enough, there exist $m$ points $\zeta^{\ep}_1,\ldots,\zeta^{\ep}_m \in \pp \Omega$, and positive real numbers $d^{\ep}_1,\ldots,d^{\ep}_m$ and a positive solution $u_{\ep}$ of equation \eqref{emdenfowlerhighdim} such that
\begin{equation}\label{profileof thesolutiontheo4}
u_{\ep}(x):=\sum_{i=1}^m \ln\left(\frac{1}{\left(  d_i^2 + |x-\zeta^{\ep}_i|^2\right)^2}\right)+ \frac{1}{2}H_a(x,\zeta^{\ep}_i) + o(1), \quad \forall x \in \overline{\Omega},
\end{equation}
where as $\ep \to 0$,
$$
\zeta^{\ep}_i \to \zeta_0\quad \hbox{and} \quad 
c\ln \left(\frac{1}{\ep}\right) \leq d^{\ep}_i \leq C\ln \left(\frac{1}{\ep}\right),
$$
for some positive constants $c$ and $C.$

 In particular, 
 {$$u_\ep(x) \to {m\over2}  G_a(x,\zeta_0)\ \hbox{in}\ \Omega\setminus B(\zeta_0,r)\ \hbox{as}\ \ep\to0  $$
for some $r>0$ and
$$\ep^2\int\limits_\Omega e^{u_\ep(x)}dx\to 4\pi m\ \hbox{as}\ \ep\to0.  $$}

\end{theo}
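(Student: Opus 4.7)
The plan is to prove Theorem \ref{theo4} by a Lyapunov--Schmidt finite-dimensional reduction, sharing structural steps with the arguments behind Theorems \ref{theo2} and \ref{theo3}, but requiring a new analysis of the reduced functional to handle the fact that the $m$ bubbles collapse to the single boundary point $\zeta_0$.

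I would first construct the approximate solution. For parameters $\zeta_1,\ldots,\zeta_m\in\pp\Omega$ clustered near $\zeta_0$ and scales $d_1,\ldots,d_m$ confined to the band $[c|\ln\ep|,C|\ln\ep|]$, set $W_\ep(x)=\sum_{i=1}^m P U_{\mu_i,\zeta_i}(x)$, where $P U_{\mu,\zeta}$ denotes the $\LL_a$-projection of the standard bubble \eqref{bubble} into the Neumann class and $\mu_i=\mu_i(\ep,d_i)$ is tuned so that, away from $\zeta_i$, each boundary-bubble projection admits the expansion $PU_{\mu_i,\zeta_i}(x)=\tfrac12 G_a(x,\zeta_i)+\text{l.o.t.}$, producing the profile \eqref{profileof thesolutiontheo4}. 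The factor $1/2$ reflects that a bubble centered at $\pp\Omega$ contributes only half of its mass to $\Omega$, and matching with the Liouville problem on a half-plane after blow-up dictates the choice of $\mu_i$. Using the asymptotics of $H_a$ (the same analysis already needed for Theorems \ref{theo2} and \ref{theo3}), I would then compute the error $R_\ep:=\LL_a W_\ep+\ep^2 e^{W_\ep}$ and estimate it in a weighted $L^\infty$-norm adapted to the bubble scales.

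Writing $u=W_\ep+\phi$ and performing the standard decomposition, the problem splits into a nonlinear equation for $\phi$ orthogonal to the approximate kernel, and a finite system of algebraic conditions on $(d,\zeta)$. The linearized operator $L_\ep \phi=\LL_a\phi+\ep^2 e^{W_\ep}\phi$ is invertible modulo the approximate kernel spanned by the Liouville zero-modes attached to each bubble (the dilation mode $Z_{i,0}$ and the two translation modes $Z_{i,1},Z_{i,2}$), just as in the proofs of the previous theorems. The corresponding weighted linear theory, together with a contraction argument, yields $\phi=\phi(\ep,d,\zeta)$ with $\|\phi\|_*=o(1)$, uniformly in the admissible range of parameters.

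The main obstacle, and the real departure from Theorem \ref{theo3}, lies in the analysis of the reduced functional $F_\ep(d,\zeta):=J_\ep(W_\ep+\phi(\ep,d,\zeta))$. When the $\zeta_i$ cluster at $\zeta_0$, the pairwise interactions $G_a(\zeta_i,\zeta_j)$ become singular and must be balanced against both the self-interaction $\tfrac12 H_a(\zeta_i,\zeta_i)$ and the boundary maximum of $a$ at $\zeta_0$. The expansion I would carry out reads, schematically,
\[
F_\ep(d,\zeta) = c_0 m + \sum_{i=1}^m \bigl( A\ln d_i + B\ln\ep + \tfrac12 H_a(\zeta_i,\zeta_i) \bigr) - \tfrac14 \sum_{i\ne j} G_a(\zeta_i,\zeta_j) + o(1),
\]
where balancing the $\ln d_i$ and $\ln\ep$ terms pins the scales at $d_i\sim |\ln\ep|$, and, after rescaling the positions $\zeta_i-\zeta_0$, the strict local maximum of $a|_{\pp\Omega}$ at $\zeta_0$ together with the repulsive effect of the pairwise term produces a finite-dimensional critical point, found either as a constrained local max or through a topological linking/deformation argument. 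This yields $u_\ep$, and the convergence $u_\ep\to \tfrac{m}{2} G_a(\cdot,\zeta_0)$ away from $\zeta_0$, together with $\ep^2\int_\Omega e^{u_\ep}\to 4\pi m$, follows from the bubble expansion and the identity $\int_{\R^2}e^{U_{\mu,\zeta}}=8\pi$ halved by the boundary placement. The hard part will be this reduced-functional expansion and the proof that critical points persist in the collapsing regime: interaction terms that were negligible in the well-separated regime of Theorem \ref{theo3} now enter at leading order, and the stationary analysis must be performed in suitable rescaled coordinates around $\zeta_0$.
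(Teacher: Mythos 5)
Your proposal follows essentially the same route as the paper: a Lyapunov--Schmidt reduction to the functional $\mathcal F_\ep$, whose expansion $8\pi|\ln\ep|\sum_i a(\zeta_i)-2\pi\sum_i a(\zeta_i)\bigl[H(\zeta_i,\zeta_i)+\sum_{j\neq i}G(\zeta_i,\zeta_j)\bigr]+o(1)$ on the shrinking configuration set $|\zeta_i-\zeta_j|\ge c|\ln\ep|^{-\kappa}$ is maximized by balancing the attraction toward the maximum of $a|_{\pp\Omega}$ at $\zeta_0$ against the repulsion $G(\zeta_i,\zeta_j)\sim-4\ln|\zeta_i-\zeta_j|$, exactly the mechanism you describe (the paper invokes Lemma 7.1 of Wei--Ye--Zhou for this maximization). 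The only structural difference is that the paper eliminates the dilation parameters beforehand via the consistency condition $\ln(8d_i^2)=c_iH(\zeta_i,\zeta_i)+\sum_{j\neq i}c_jG(\zeta_i,\zeta_j)$, so the reduced problem is posed in $\zzeta\in(\pp\Omega)^m$ alone, whereas you keep the $d_i$ as free variables to be balanced in the reduced energy; both are standard and equivalent here.
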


All the previous arguments yield immediately the following existence result for the higher-dimensional problem \eqref{p}.

\begin{theo}\label{main}
Assume $D$ is as described in \eqref{D}. 
\begin{itemize}
\item[(i)] If (A1) of Theorem \eqref{theo2} holds,
then for every $\ep>0$ small enough, there exists a $\Gamma-$invariant solution $v_\ep$ to problem \eqref{p} with $m$ {\underline {interior}} layers which concentrate at
$m$ distinct $(N-2)-$dimensional  minimal submanifold  of the boundary of $D $ as $\ep\to0.$
 {Moreover (see \eqref{orbit})
$$\ep^2\int\limits_{\mathcal D} e^{v_\ep(x)}dx\to 8\pi \sum\limits_{i=1}^m |\Xi(\zeta^*_i)|\ \hbox{as}\ \ep\to0.  $$}

\medskip
\item[(ii)] If (A2) of Theorem \eqref{theo3} holds,
then for every $\ep>0$ small enough, there exists a $\Gamma-$invariant solution $v_\ep$ to problem \eqref{p} with $m$ {\underline {boundary}} layers which concentrate at
$m$ distinct $(N-2)-$dimensional  minimal submanifold  of the boundary of $D $ as $\ep\to0.$
 {Moreover (see \eqref{orbit})
$$\ep^2\int\limits_{\mathcal D} e^{v_\ep(x)}dx\to 4\pi \sum\limits_{i=1}^m|\Xi(\zeta^*_i)|\ \hbox{as}\ \ep\to0.  $$}

\medskip
\item[(iii)] If (A3) of Theorem \eqref{theo4},
then for any integer $m\ge1$ and every $\ep>0$ small enough, there exists a $\Gamma-$invariant solution $v_\ep$ to problem \eqref{p} with $m$ {\underline {boundary}} layers which concentrate at
the same $(N-2)-$dimensional  minimal submanifold  of the boundary of $D $ as $\ep\to0.$
 {Moreover (see \eqref{orbit})
$$\ep^2\int\limits_{\mathcal D} e^{v_\ep(x)}dx\to 4\pi m|\Xi(\zeta_0)|\ \hbox{as}\ \ep\to0.  $$}

\end{itemize}
\end{theo}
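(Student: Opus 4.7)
The plan is to observe that Theorem \ref{main} is essentially a corollary of Theorems \ref{theo2}, \ref{theo3}, \ref{theo4}, combined with the $\Gamma$-invariant reduction introduced in the paragraph preceding \eqref{emdenfowlerhighdim} and a change of variables. First, since $\overline{\Omega}$ lies in the open orthant $\{x_1>0,x_n>0\}$, the weight $a(x_1,x_n)=x_1^{M_1-1}x_n^{M_n-1}$ is smooth and strictly positive on $\overline{\Omega}$, so the structural hypothesis on $a$ in \eqref{emdenfowlerhighdim} holds. The three cases (i), (ii), (iii) of the statement correspond directly to hypotheses (A1), (A2), (A3) of the two-dimensional theorems applied to this particular $a$.

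Next, in each case I would apply the corresponding two-dimensional theorem to obtain a solution $u_\ep$ of \eqref{emdenfowlerhighdim} with concentration points $\zeta^\ep_i\to\zeta^*_i$ and the explicit profiles \eqref{profileof thesolutiontheo2}--\eqref{profileof thesolutiontheo4}. Setting
$$v_\ep(y_1,y_n,x'):=u_\ep(|y_1|,|y_n|,x'),$$
the calculation recorded just before \eqref{emdenfowlerhighdim} shows that $v_\ep$ is a $\Gamma$-invariant solution of \eqref{p} on $D$; smoothness across the symmetry axes is not an issue because $\overline{\Omega}$ stays away from them, so the quotient map $\pi(y_1,y_n,x'):=(|y_1|,|y_n|,x')$ is smooth on all of $\overline{D}$. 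Under this quotient, each point $\zeta\in\overline{\Omega}$ has fibre $\pi^{-1}(\zeta)=\Xi(\zeta)$, so a two-dimensional bubble of $u_\ep$ around $\zeta^\ep_i$ lifts to a layer of $v_\ep$ concentrated along the $(N-2)$-dimensional submanifold $\Xi(\zeta^\ep_i)$, which collapses to $\Xi(\zeta^*_i)\subset\partial D$ as $\ep\to 0$; this submanifold is minimal and diffeomorphic to $\mathbb{S}^{M_1-1}\times\mathbb{S}^{M_n-1}$ in the boundary cases, and is an interior one in case (i).

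For the integral asymptotics, Fubini combined with polar coordinates in the $y_1$ and $y_n$ variables gives
$$\ep^2\int_D e^{v_\ep(y)}\,dy=|\mathbb{S}^{M_1-1}|\,|\mathbb{S}^{M_n-1}|\,\ep^2\int_\Omega a(x)\,e^{u_\ep(x)}\,dx.$$
From the two-dimensional theorems, the nonnegative measures $\ep^2 e^{u_\ep(x)}\,dx$ converge weakly to $8\pi\sum_i \delta_{\zeta^*_i}$ in case (i), to $4\pi\sum_i \delta_{\zeta^*_i}$ in case (ii), and to $4\pi m\,\delta_{\zeta_0}$ in case (iii). Testing against the continuous weight $a$ and using the identity
$$|\Xi(\zeta)|=|\mathbb{S}^{M_1-1}|\,|\mathbb{S}^{M_n-1}|\,a(\zeta),$$
which follows from the product structure of the orbit as a product of spheres of radii $\zeta_1$ and $\zeta_n$, yields the three asserted limits for $\ep^2\int_D e^{v_\ep}\,dy$.

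The only delicate point is the passage to the limit against the weight $a$: one must verify that no mass of $\ep^2 e^{u_\ep}$ escapes or spreads out, i.e., that the weak convergence to a finite sum of Dirac masses is genuine. This is encoded in the sharp pointwise profiles \eqref{profileof thesolutiontheo2}--\eqref{profileof thesolutiontheo4} furnished by the previous theorems, and a routine estimate of $e^{u_\ep}$ far from the concentration set using the Green-function representation \eqref{regularpartGreen1} closes the verification.
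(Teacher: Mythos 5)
Your proposal is correct and follows exactly the route the paper intends: the paper gives no separate proof of Theorem \ref{main}, asserting it follows ``immediately'' from Theorems \ref{theo2}--\ref{theo4} via the $\Gamma$-invariant reduction, and your argument (positivity of $a$ on $\overline{\Omega}$, the lift $v_\ep=u_\ep\circ\pi$, the polar-coordinate identity $\ep^2\int_D e^{v_\ep}=|\mathbb{S}^{M_1-1}||\mathbb{S}^{M_n-1}|\,\ep^2\int_\Omega a\,e^{u_\ep}$, and testing the weak limit $\ep^2e^{u_\ep}\,dx\rightharpoonup c\sum_i\delta_{\zeta_i^*}$ against $a$ together with $|\Xi(\zeta)|=|\mathbb{S}^{M_1-1}||\mathbb{S}^{M_n-1}|a(\zeta)$) is precisely the omitted verification. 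In fact you supply more detail than the paper does.
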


Let us make some comments.\\

First, we strongly believe that our results hold true even if we drop the symmetry assumption. In particular, we conjecture that (i) and (ii) of Theorem \ref{main} can be rephrased in the more general form

\begin{itemize}
\item[(1)]
  {\it if $D$ is a general bounded domain in $\R^N$ with $N\ge3$ and $\Xi$ is a $(N-2)-$dimensio\-nal minimal submanifold (possibly non-degenerate) of the boundary of $D$ with a suitable sign on the sectional curvatures, then   problem  \eqref{p}
has a solution with an interior layer concentrating along $\Xi$ as $\ep$ goes to zero.}\\
\item[(2)]
  {\it if $D$ is a general bounded domain in $\R^N$ with $N\ge3$ and $\Xi$ is a $(N-2)-$dimensio\-nal minimal submanifold (possibly non-degenerate) of the boundary of $D$, then   problem  \eqref{p}
has a solution with a boundary layer   concentrating along $\Xi$ as $\ep$ goes to zero.}\\
\end{itemize}

Second, the proof of our result relies on a well known Lyapunov-Schmidt procedure. The same strategy has been used by Wei, Ye and Zhou \cite{WEIYEZHOU1,WEIYEZHOU2} to find concentrating solutions for the anisotropic Dirichlet problem
$$-\rdiv(a(x)\nabla u)  = \ep^2 a(x)e^{u}  \quad \hbox{in} \quad \Omega ,\quad \quad u=0 \quad \hbox{on} \quad \pp \Omega,
$$
 where $\Omega\subset \R^2$ is a smooth bounded domain, $a: \overline{\Omega}\to \R$ is a strictly positive smooth function and $\ep>0$ is a small parameter.

\medskip

The structure of the paper is the following. In section \ref{anisotropic robin's function} we perform a careful study of Green's function introduced in \eqref{greensfunctioneqn0}. In section \ref{approximation} we provide the approximation of the solutions predicted by our existence Theorems and compute the error created by this approximation. Section \ref{Reductionscheme} concerns with the finite dimensional reduction scheme which is the first step in the proof of our existence results. In section \ref{energy estimates} we find precise energy estimates for the approximation found in section \ref{approximation}. Finally, in section \ref{proofoftheorems} we provide the detailed proof of our existence Theorems using variational and topological arguments.

\medskip
\section{Anisotropic Green's function}\label{anisotropic robin's function}

In this part we  analyze the asymptotic boundary behavior of the functions $G(\cdot,\zeta)=G_a(\cdot,\zeta)$ and $H(\cdot,\zeta)=H_a(\cdot,\zeta)$ introduced in \eqref{greensfunctioneqn0} and \eqref{regularpartGreen1}, respectively.

\medskip
We begin by recalling some well known facts about Sobolev spaces and refer the reader to \cite{HAIMBREZIS,DINEZZAPALATUCCIVALDINOCI,
GILBARGTRUDINGER} and references therein, for an exhaustive description of these spaces and the related results hereby presented.

\medskip
Let $\Omega\subset \R^2$ be a bounded domain with smooth boundary. The space $L^{p}(\Omega)$ is the space of measurable functions $v:\Omega \to \R$ for which the norm
$$
\|v\|_{L^{p}(\Omega)}:=\left\{
\begin{array}{ccc}
\left(\int_{\Omega}|v(x)|^p\,dx\right)^{\frac{1}{p}},& 1 \leq p < \infty\\
\\
\sup_{x\in \Omega}|v(x)|,& p=\infty
\end{array}
\right.
$$
is finite. The Sobolev space $W^{k,p}(\Omega)$ is the space of functions in $L^p(\Omega)$ having {\it weak derivatives}, up to order $k$, also in $L^p(\Omega)$. The space $W^{k,p}(\Omega)$ is a Banach space endowed with the norm
$$
\|v\|_{W^{k,p}(\Omega)}:=\sum_{i=0}^k
\|D^i v\|_{L^{p}(\Omega)}.
$$ 

Given $k\in \mathbb{N}$, we let $C^{k}(\overline{\Omega})$ denote the space of functions having continuous derivatives of order $k$ up to the boundary. In addition, for any $\A \in (0,1]$, we denote $C^{k,\A}(\overline{\Omega})$ the {\it H\"{o}lder space}, consisting of functions $v \in C^k(\overline{\Omega})$ for which the {\it H\"{o}lder norm}
$$
\|v\|_{C^{k,\A}(\overline{\Omega})}:= \sum_{i=0}^k \|D^i v\|_{L^{\infty}(\Omega)} + \sup \limits_{x,y \in \overline{\Omega}\,\,x\neq y}\frac{|D^i v(x)- D^i v(y)|}{|x-y|^{\A}}
$$ 
is finite.

\medskip
We will make use of the following embeddings for Sobolev functions.  
\begin{equation}\label{SobolevEmdebbings}
W^{2,p}(\Omega) \hookrightarrow \left\{
\begin{array}{ccc}
W^{1,\frac{2p}{2-p}}(\Omega)\cap C^{0,2\left(1-\frac{1}{p}\right)}(\overline{\Omega}),& 1<p<2\\
\\
C^{1,1-\frac{2}{p}}(\overline{\Omega}), & p>2.
\end{array}
\right.
\end{equation}

From the continuity of the {\it trace operator} together with Sobolev embeddings in one dimensional manifolds, we find that for any $1<p<2$,
\begin{equation}\label{traceembedding}
W^{1,p}(\Omega)\hookrightarrow L^q(\pp \Omega), \quad \quad  \forall q \in \left(1,
\frac{p}{2-p}\right). 
\end{equation}

Set
$$
\gamma(x)= \left(\nabla \,\ln a\right)(x), \quad \hbox{for } x\in \overline{\Omega}
$$ 
and notice that $\gamma \in C^{\infty}(\overline{\Omega})$, since $a \in C^{\infty}(\overline{\Omega})$ and $a>0$.

\medskip

Recall also from \eqref{linearpartanisotropiceqn} that
\begin{equation*}
\LL  \,:= \Delta  \,+\, \gamma(x)\cdot \nabla  \,-\, 1
\end{equation*}  
and that $G=G(x,\zeta)$, the {\it Green's function} associated to $\LL$ satisfies   for every $\zeta\in \overline{\Omega}$ the boundary value problem
\begin{eqnarray}
\LL \,G(\cdot,\zeta) + 8\pi\D_   {\zzeta}&=&0\quad \quad  \hbox{in}\quad \Omega, \label{greensfunctioneqn}\\
\nonumber\\
\frac{\pp G(\cdot,\zeta)}{\pp \nu}&=&0\quad \quad  \hbox{on} \quad \pp \Omega.\label{greensfunctionbdrcondition} 
\end{eqnarray}

For $x\in \Omega$, the regular part of $G(x,\zeta)$ is  the function
\begin{equation}\label{regularpartGreen}
H(x,\zeta):= 
\left\{
\begin{array}{ccc}
G(x,\zeta) + 4 \ln \left(|x-\zeta|\right),&\quad 
\zeta \in \Omega,\\
\\
G(x,\zeta) + 8 \ln\left(|x-\zeta|\right),&\quad \zeta\in \pp \Omega
\end{array}
\right.
\end{equation}
Let us introduce the vector function $R=R(z)$, solving 
\begin{equation}\label{ellipticequationR20}
\Delta_z R - R = \frac{z}{|z|^2} \quad \quad \hbox{in } \R^2, \quad \quad R\in L^{\infty}_{loc}(\R^2).
\end{equation}

\medskip
We remark that standard regularity theory implies that $R\in W^{2,p}_{loc}(\R^2)\cap C^{\infty}(\R^2-\{0\})$, for any $p\in (1,2)$. On the other hand, Sobolev embeddings allow us to conclude that for any ball $B_r(0)$ of radius $r>0$ and centered at the origin
\begin{equation}\label{integrabilityR}
R\in W^{1,p}(B_r(0)) \cap C^{0,\frac{1}{p}}(\overline{B_r(0)})
, \quad \quad \forall \,\,p\in(1,\infty).
\end{equation}

Our first result uses the function $R(z)$ to describe the regularity of the family of functions $\zeta \in \overline{\Omega} \mapsto H(\cdot,\zeta)$ and concerns with the local behavior of $H(x,\zeta)$.

\medskip
\begin{theo}\label{RegularPartInner} 
Let $R=R(z)$ be the function described in \eqref{ellipticequationR20}. There exists a function $H_1=H_1(x,\zeta)$, such that 
\begin{itemize}
\item[(i)] for every $\zeta,x\in \overline{\Omega}$,
\begin{equation}\label{regularpartqualitativebehavior}
H(x,\zeta)=H_1(x,\zeta)\,+\,\left\{
\begin{array}{cc}
4\gamma(\zeta)\cdot R(x-\zeta),& \zeta \in \Omega \\
\\
8\gamma(\zeta)\cdot R(x-\zeta),& \zeta \in \pp \Omega
\end{array}
\right.
\end{equation}

and 

\medskip
\item[(ii)] the mapping $
\zeta \in \overline{\Omega} \mapsto H_1(\cdot,\zeta)$
belongs to $C^1\left(\Omega; C^1(\overline{\Omega})\right)\cap C^1(\pp \Omega;C^1(\overline{\Omega}) )$.
\end{itemize}

\medskip

In particular, for any $\A\in (0,1)$, $H\in C^{0,\A}(\overline{\Omega}\times {\Omega})\cap C^{0,\A}(\overline{\Omega}\times \pp {\Omega})$ and the Robin's function $\zeta \in \overline{\Omega} \mapsto H(\zeta,\zeta)$ belongs to $C^{1}(\Omega)\cap C^1(\pp \Omega)$.
\end{theo}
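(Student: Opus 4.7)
The plan is to extract the most singular part of $H$ through an explicit ansatz built with $R$, and then to show that the remainder $H_1$ solves a Neumann problem whose data are regular enough to yield both the local regularity in $x$ and the $C^1$ dependence on $\zeta$.

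\emph{Ansatz and equation for $H_1$.} Writing $G(x,\zeta)=\Gamma(x,\zeta)+H(x,\zeta)$ with $\Gamma(x,\zeta)=-4\ln|x-\zeta|$ for $\zeta\in\Omega$ and $\Gamma(x,\zeta)=-8\ln|x-\zeta|$ for $\zeta\in\pp\Omega$, the Dirac mass in \eqref{greensfunctioneqn} is absorbed by $\Delta\Gamma$, so $H(\cdot,\zeta)$ must solve
\begin{equation*}
\LL H=c\,\gamma(x)\cdot\frac{x-\zeta}{|x-\zeta|^2}+c\ln|x-\zeta|\text{ in }\Omega,\qquad \frac{\pp H}{\pp\nu}=-\frac{\pp\Gamma}{\pp\nu}\text{ on }\pp\Omega,
\end{equation*}
with $c\in\{4,8\}$. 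The first summand on the right is only in $L^p(\Omega)$ for $p<2$; this is the obstruction to higher regularity that must be absorbed. Splitting $\gamma(x)=\gamma(\zeta)+(\gamma(x)-\gamma(\zeta))$ isolates the bad piece $c\,\gamma(\zeta)\cdot(x-\zeta)/|x-\zeta|^2$, which by \eqref{ellipticequationR20} is produced exactly by $\LL\bigl(c\,\gamma(\zeta)\cdot R(x-\zeta)\bigr)$ up to an additional term of the form $-c\,\gamma(x)\cdot\gamma(\zeta)\,\nabla_xR(x-\zeta)$. This motivates defining $H_1(x,\zeta):=H(x,\zeta)-c\,\gamma(\zeta)\cdot R(x-\zeta)$, which is precisely the decomposition \eqref{regularpartqualitativebehavior}.

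\emph{Regularity of $H_1$ in $x$.} A direct computation then gives $\LL H_1=F(\cdot,\zeta)$ in $\Omega$ and $\pp_\nu H_1=g(\cdot,\zeta)$ on $\pp\Omega$, where
\begin{equation*}
F(x,\zeta)=c\bigl(\gamma(x)-\gamma(\zeta)\bigr)\cdot\frac{x-\zeta}{|x-\zeta|^2}+c\ln|x-\zeta|-c\,\gamma(x)\cdot\gamma(\zeta)\,\nabla_xR(x-\zeta).
\end{equation*}
By the Lipschitz continuity of $\gamma$, integrability of the logarithm, and \eqref{integrabilityR}, we have $F(\cdot,\zeta)\in L^p(\Omega)$ for every $p\in(1,\infty)$, uniformly as $\zeta$ varies in a compact subset of $\overline{\Omega}$. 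For the boundary datum: when $\zeta\in\Omega$ both pieces are smooth on $\pp\Omega$; when $\zeta\in\pp\Omega$ one combines \eqref{traceembedding} with \eqref{integrabilityR} to obtain $g(\cdot,\zeta)\in L^q(\pp\Omega)$ for every $q\in(1,\infty)$. Standard $W^{2,p}$ Neumann theory together with \eqref{SobolevEmdebbings} then yields $H_1(\cdot,\zeta)\in C^{1,\A}(\overline{\Omega})$ for every $\A\in(0,1)$.

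\emph{Regularity in $\zeta$ and main obstacle.} For the $C^1$ dependence on $\zeta$, one checks that $\zeta\mapsto F(\cdot,\zeta)$ and $\zeta\mapsto g(\cdot,\zeta)$ are $C^1$ as maps into $L^p(\Omega)$ and $L^q(\pp\Omega)$ respectively, treating $\zeta\in\Omega$ and $\zeta\in\pp\Omega$ separately (differentiating only tangentially in the latter case). The key inputs are smoothness of $\gamma$, $C^1$ dependence of $\zeta\mapsto R(\cdot-\zeta)$ as a map into $W^{1,p}_{loc}(\R^2)$, and $C^1$ dependence of $\zeta\mapsto(\cdot-\zeta)/|\cdot-\zeta|^2$ as a map into $L^p$ on any fixed domain. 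Inverting $\LL$ with the appropriate Neumann condition gives $\zeta\mapsto H_1(\cdot,\zeta)$ as a $C^1$ map into $W^{2,p}(\Omega)\hookrightarrow C^1(\overline{\Omega})$. The Hölder regularity of $H$ and the $C^1$ character of the Robin function $\zeta\mapsto H(\zeta,\zeta)$ then follow from \eqref{regularpartqualitativebehavior} and $R\in C^{0,\A}_{loc}(\R^2)$ via \eqref{integrabilityR}. The main technical difficulty is the boundary case $\zeta\in\pp\Omega$: the correction $R(x-\zeta)$ is centered on $\pp\Omega$ itself, so the trace $\pp_\nu R(\cdot-\zeta)$ on $\pp\Omega$ has to be controlled uniformly in $\zeta$ and shown to depend in a $C^1$ fashion on tangential motion of $\zeta$ along $\pp\Omega$ in the $L^q(\pp\Omega)$-topology.
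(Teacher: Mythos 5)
Your decomposition is exactly the one the paper uses: peel off $c\,\gamma(\zeta)\cdot R(x-\zeta)$ using \eqref{ellipticequationR20}, reduce to a Neumann problem for $H_1$ with $L^p$ data, and get $C^1$ dependence on $\zeta$ by differentiating the data (tangentially when $\zeta\in\pp\Omega$) and re-applying $W^{2,p}$ theory plus \eqref{SobolevEmdebbings}. Two remarks. First, a harmless sign slip: the zeroth-order source is $-c\ln|x-\zeta|$, not $+c\ln|x-\zeta|$; this changes nothing since only $|\ln|x-\zeta||\in L^p$ is used.

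Second, there is one point where your justification does not actually cover the claim. For $\zeta\in\pp\Omega$ the boundary datum $g(\cdot,\zeta)$ contains the piece $c\,\nu(x)\cdot\frac{x-\zeta}{|x-\zeta|^2}$, and you assert $g(\cdot,\zeta)\in L^q(\pp\Omega)$ ``by combining \eqref{traceembedding} with \eqref{integrabilityR}.'' Those two facts only control the $\gamma(\zeta)\cdot D_zR(x-\zeta)$ piece. The piece $\nu(x)\cdot(x-\zeta)/|x-\zeta|^2$ naively scales like $|x-\zeta|^{-1}$, which is not even in $L^1$ of the one-dimensional manifold $\pp\Omega$; what saves it is the geometric cancellation $\nu(x)\cdot(x-\zeta)=\OO(|x-\zeta|^2)$ for $x,\zeta\in\pp\Omega$, which makes this quotient extend smoothly to $x=\zeta$ with value $\tfrac12 k_{\pp\Omega}(\zeta)$. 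The paper proves this separately (Lemma \ref{boundaryterm} in the Appendix, via the local graph parametrization ${\rm p}$ with ${\rm p}(0)={\rm p}'(0)=0$) and then invokes it at precisely this step. You correctly flag the boundary case as the main difficulty, but you locate the difficulty in the trace of $D_zR$ rather than in this term; without the curvature cancellation the elliptic estimate for $H_1$ does not close when $\zeta\in\pp\Omega$. Adding that lemma (or an equivalent Taylor-expansion argument on $\pp\Omega$) completes your proof, and the same cancellation is also what makes the tangential $\zeta$-derivative of this boundary term controllable in $L^q(\pp\Omega)$ for part (ii).
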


\begin{proof} For $\zeta \in \overline{\Omega}$, let us write 
\begin{equation}\label{constantczeta}
c:=\left\{
\begin{array}{ccc}
4, &\hbox{if } \zeta \in \Omega\\
8, &\hbox{if } \zeta \in \pp \Omega.
\end{array}
\right.
\end{equation}

From \eqref{greensfunctioneqn} and \eqref{regularpartGreen}, we observe that
\begin{equation}
\LL \, H(x,\zeta)= c\gamma(x)\cdot\frac{x-\zeta}{|x-\zeta|^2}\,-\, c\ln(|x-\zeta|), \quad \quad \forall\,\, x\in \Omega\label{EqnregularpartGreen2}
\end{equation}
with the boundary condition
\begin{equation}
\frac{\pp H(x,\zeta)}{\pp \nu_x}= c\nu(x)\cdot\frac{x-\zeta}{|x-\zeta|^2}\,, \quad \quad \forall\,\, x\in \pp \Omega, \quad x\neq \zeta
\label{EqnregularpartGreen2boundary}.
\end{equation}

The right hand side in \eqref{EqnregularpartGreen2} can be written as
\begin{equation}\label{righthand29}
c\gamma(\zeta)\cdot \frac{x-\zeta}{|x-\zeta|^2} + E(x,\zeta),
\end{equation}
where
$$
E(x,\zeta):=c\left(\gamma(x) -\gamma(\zeta)\right)\cdot \frac{x-\zeta}{|x-\zeta|^2} -c\ln(|x-\zeta|).
$$

Using a smooth extension of the function $\gamma$ to a larger compact domain containing $\Omega$, we find a constant $C>0$ depending only on $\gamma$ and $\Omega$ such that
$$
\left|\left(\gamma(x)-\gamma(\zeta)\right)\cdot \frac{(x-\zeta)}{|x-\zeta|^2}\right|\leq C, \quad \hbox{for } x\in \Omega.
$$

On the other hand, given $p\in (0,1)$, there exists a constant $C=C(p,\Omega)>0$, such that for every $\zeta \in \overline{\Omega}$,
$$
\int_{\Omega}\left|\ln(|x-\zeta|)\right|^p \,dx \leq C\int_{0}^{2{\rm diam}(\Omega)}r|\ln(r)|^p\,dr \leq C.
$$
 
We conclude that for any $p\in (1,\infty)$, the mapping
$$
\zeta \in \overline{\Omega} \mapsto E(\cdot,\zeta)\in L^p(\Omega)
$$
is well defined. The Dominated Convergence Theorem yields that $\zeta \mapsto E(\cdot,\zeta)$ belongs to $C({\Omega}; L^p(\Omega))\cap C(\pp {\Omega}; L^p(\Omega))$.

\medskip
Next, let $I_{2\times 2}$ be the $2 \times 2$ identity matrix and 
$$
(x-\zeta)\otimes (x-\zeta):= 
\left[
\begin{array}{cc}
(x_1-\zeta_1)^2& (x_1-\zeta_1) (x_2-\zeta_2)\\
(x_1-\zeta_1)(x_2-\zeta_2)& (x_2-\zeta_2)^2
\end{array}
\right].
$$

We compute for $x,\zeta \in \overline{\Omega}$, $x\neq \zeta$
$$
\nabla_   {\zzeta}E(x,\zeta)=
$$
$$
c\left( D \gamma(\zeta)\cdot \frac{x-\zeta}{|x-\zeta|^2} + \frac{4(\gamma(x)-\gamma(\zeta)}{|x-\zeta|^2}\cdot\left(I_{2\times 2} - 2\,\frac{(x-\zeta)\otimes (x-\zeta)}{|x-\zeta|} \right)\right).
$$

\medskip
Using again Dominated Convergence Theorem again we obtain that for any $p\in (1,2)$, $\zeta\in \overline{\Omega} \mapsto E(\cdot,\zeta)$ belongs to $C^1({\Omega};L^p(\Omega))\cap C^1(\pp {\Omega};L^p(\Omega))$.

\medskip

Define
$$
H_1(x,\zeta):= H(x,\zeta) - c\,\gamma(\zeta)\cdot R(x-\zeta), \quad \quad \forall  x,\zeta\in \Omega,
$$
where $R=R(z)$ is the vector function described in \eqref{ellipticequationR20} and $c$ is described in \eqref{constantczeta}. 

\medskip

From \eqref{ellipticequationR20}, \eqref{EqnregularpartGreen2} and \eqref{righthand29}, we compute the equation for $H_1(\cdot,\zeta)$ to obtain that 
\begin{equation}
\LL \, H_1(x,\zeta)= -c\,\gamma(x)\cdot \left(\gamma(\zeta)\cdot D_{z} R(x-\zeta)\right)\,+\,E(x,\zeta), \quad \quad \hbox{in }\Omega\label{EqnregularpartGreen1.1}
\end{equation}
and the boundary condition reads as 
\begin{equation}
\frac{\pp H_1(x,\zeta)}{\pp \nu_x}\,=\,c\,\nu(x)\cdot\left(\frac{x-\zeta}{|x-\zeta|^2} - \gamma(\zeta)\cdot D_z R(x-\zeta)\right)\quad \quad \hbox{on }\pp \Omega \label{EqnregularpartGreen2.1}.
\end{equation}

The fact that $\zeta \mapsto E(\cdot,\zeta)$ belongs to $C({\Omega};L^p(\Omega))\cap C(\pp {\Omega};L^p(\Omega))$ for any $p \in (1,\infty)$, together with \eqref{integrabilityR}, imply that for any $p\in (1,\infty)$ and any $\zeta \in \overline{\Omega}$, the right hand side in equation \eqref{EqnregularpartGreen1.1} belongs to $L^p(\Omega)$.

\medskip
In the case $\zeta \in \Omega$, the right hand side in \eqref{EqnregularpartGreen2.1} is smooth. In the case $\zeta\in \pp \Omega$, we appeal to Lemma \ref{boundaryterm} in the Appendix and embedding \eqref{traceembedding} to find that for any $\zeta \in \pp {\Omega}$ the right hand side in \eqref{EqnregularpartGreen2.1} belongs to  $L^p(\pp \Omega)$ for any $p>1$.

\medskip

Standard elliptic regularity theory implies that for any $p\in (1,\infty)$, $H_1(\cdot,\zeta)\in W^{2,p}(\Omega)$ and the Sobolev embeddings in \eqref{SobolevEmdebbings} yield that $H_1(\cdot,\zeta)\in C^{1,\A}(\overline{\Omega})$, for any $\A\in (0,1)$.

\medskip

Finally, we check that $\zeta \mapsto H_1(\cdot,\zeta)$ belongs to $C^1(\Omega;C^1(
\overline{\Omega}))\cap C^1(\pp \Omega;C^1(\overline{\Omega}))$. We first deal with the inner regularity. Recall that for any $p\in (1,2)$, $R\in W^{2,p}_{loc}(\R^2)$ and $\zeta \mapsto \nabla_   {\zzeta} E$ belongs $C({\Omega};L^p(\Omega))$. 

\medskip
A direct application of the Dominated Convergence Theorem yields that the mapping
$$
\zeta \in {\Omega} \mapsto \nabla_   {\zzeta}\left[\,-c\,\gamma(x)\cdot \left(\gamma(\zeta)\cdot D_{z} R(x-\zeta)\right)\,+\,E(x,\zeta)\,\right]
$$
belongs to $C({\Omega};L^p(\Omega))$ and consequently, the mapping $\zeta \in \Omega\mapsto\nabla_   {\zzeta} H_1(\cdot,\zeta)\in W^{2,p}(\Omega)$ is well defined and solves 
\begin{equation*}
\LL \, \left(\nabla_   {\zzeta} H_1(x,\zeta)\right)= \nabla_   {\zzeta}\left[\,-c\,\gamma(x)\cdot \left(\gamma(\zeta)\cdot D_{z} R(x-\zeta)\right)\,+\,E(x,\zeta)\,\right]
, \quad \quad \hbox{in }\Omega\label{EqnregularpartGreen1.2}
\end{equation*}
with the boundary condition
\begin{equation*}
\frac{\pp \left(\nabla_   {\zzeta}H_1(x,\zeta)\right)}{\pp \nu_x}\,=\,c\nabla_   {\zzeta}\left(\nu(x)\cdot\left(\frac{x-\zeta}{|x-\zeta|^2} - \gamma(\zeta)\cdot D_z R(x-\zeta)\right) \right)\quad \quad \hbox{on }\pp \Omega \label{EqnregularpartGreen2.2}.
\end{equation*}

Regularity theory and Sobolev embeddings in \eqref{SobolevEmdebbings} and \eqref{traceembedding} imply that the mapping $\zeta \in \Omega \mapsto \nabla_   {\zzeta}H_1(\cdot,\zeta)$ belongs to $C(\Omega;C^{0,\A}(\Omega))$ for any $\A\in (0,1)$. 

\medskip
As for the boundary regularity, we proceed in the same way as we did for the inner regularity, replacing $\nabla_   {\zzeta}$ by its the tangential component respect to the $\pp \Omega$. This concludes the proof of the lemma.
\end{proof}

\medskip
Next, we introduce some notation that will be needed for subsequent developments. Fix $\eta>0$ small such that every $\zeta \in \Omega$ with $\rdist(\zeta,\pp \Omega)< \eta$, has a well defined reflection across $\pp \Omega$ along the normal direction, $\zeta^* \in \Omega^c$. Denote, 
$$
\Omega_{\eta}:= \{\zeta\in \Omega\,:\, \rdist(\zeta,\Omega)<\eta\}
$$
which is also a smooth domain. Observe that for any $\zeta \in \Omega_{\eta}$, $|\zeta- \zeta^*|=2\rdist(\zeta, \pp \Omega)$.

\medskip

Our next result concerns the boundary asymptotic behavior of the Robin's function, which we recall is given by $\zeta \in \overline{\Omega} \mapsto H(\zeta,\zeta)$. 

\begin{prop} \label{asymptoticsof H}
There exists a mapping $z\in C(\Omega_{\eta};C^{0,\A}(\overline{\Omega}))\cap L^{\infty}(\Omega_{\eta};C^{0,\A}(\overline{\Omega}))$ such that
$$
H(x,\zeta):= -4\ln\left(|x-\zeta^*|\right) + z(x,\zeta), \quad \quad \forall\,\, x,\in \overline{\Omega}, \quad \forall\,\, \zeta \in \Omega_{\eta}.
$$

Even more, for every $\zeta \in \Omega_{\eta}$ and $x\in \Omega$
\begin{equation}\label{zeta}
z(x,\zeta)=4\gamma(\zeta)\cdot R(x- \zeta) - 4\gamma(\zeta^*)\cdot R(x-\zeta^*) + \tilde{z}(x,\zeta),
\end{equation}
where the mapping $\zeta \in \Omega_{\eta} \mapsto\tilde{z}(\cdot,\zeta)$ belongs to $C^1\left(\overline{\Omega_{\eta};} C^1(\overline{\Omega})\right).$
\end{prop}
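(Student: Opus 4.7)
My plan is to define $z$ directly via $z(x,\zeta):=H(x,\zeta)+4\ln|x-\zeta^*|$, analyze the mixed boundary value problem that $z$ solves uniformly in $\zeta\in\Omega_\eta$, and then refine the decomposition by peeling off the leading $R$-type terms to identify $\tilde z$.

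First I would compute the equation for $z$. Since $\zeta^*\in\Omega^c$, the map $x\mapsto 4\ln|x-\zeta^*|$ is harmonic and smooth on $\overline{\Omega}$, so a direct calculation using \eqref{EqnregularpartGreen2}--\eqref{EqnregularpartGreen2boundary} yields
$$\LL z=4\gamma(x)\cdot\Bigl[\tfrac{x-\zeta}{|x-\zeta|^2}+\tfrac{x-\zeta^*}{|x-\zeta^*|^2}\Bigr]-4\ln|x-\zeta|-4\ln|x-\zeta^*|\ \hbox{in }\Omega,$$
$$\partial_\nu z=4\nu(x)\cdot\Bigl[\tfrac{x-\zeta}{|x-\zeta|^2}+\tfrac{x-\zeta^*}{|x-\zeta^*|^2}\Bigr]\ \hbox{on }\partial\Omega.$$
The next step is to obtain uniform-in-$\zeta$ estimates. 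For $x\in\Omega$ one has the reflection inequality $|x-\zeta^*|\geq|x-\zeta|$ (exactly true in the half-plane, up to bounded curvature corrections in general), so the interior source lies in $L^p(\Omega)$ for every $p\in(1,2)$, uniformly in $\zeta\in\Omega_\eta$. The Neumann data is precisely the object addressed by Lemma \ref{boundaryterm} in the Appendix: in the flat model the normal components of $x-\zeta$ and $x-\zeta^*$ cancel exactly, and in the curved domain this yields $L^p(\partial\Omega)$ control for any $p>1$, uniform in $\zeta$.

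Elliptic regularity for mixed-Neumann problems plus the embeddings \eqref{SobolevEmdebbings}--\eqref{traceembedding} then give $z(\cdot,\zeta)\in W^{2,p}(\Omega)\hookrightarrow C^{0,\A}(\overline{\Omega})$ for every $\A\in(0,1)$, with the $L^{\infty}(\Omega_\eta;C^{0,\A})$-bound and continuity in $\zeta$ obtained by Dominated Convergence exactly as in the proof of Theorem \ref{RegularPartInner}. To refine to $C^1$, I would use that $\LL R(x-p)=\frac{x-p}{|x-p|^2}+\gamma(x)\cdot D_x R(x-p)$, so that subtracting the appropriate $R$-translates with coefficients built from $\gamma(\zeta)$ and $\gamma(\zeta^*)$ replaces each leading $\gamma(x)\cdot(x-p)/|x-p|^2$ singularity by a bounded $(\gamma(x)-\gamma(p))\cdot(x-p)/|x-p|^2$ term. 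Setting $\tilde z(x,\zeta):=z(x,\zeta)-4\gamma(\zeta)\cdot R(x-\zeta)+4\gamma(\zeta^*)\cdot R(x-\zeta^*)$ and invoking \eqref{integrabilityR}, the equation satisfied by $\tilde z$ has right-hand side in $L^p(\Omega)$ for every $p<\infty$ and boundary data again controlled by Lemma \ref{boundaryterm}; the Sobolev embedding for $p>2$ then gives $\tilde z(\cdot,\zeta)\in C^{1,\A}(\overline{\Omega})$.

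Finally, the $C^1$ dependence on $\zeta\in\overline{\Omega_\eta}$ is obtained by differentiating the equation for $\tilde z$ in $\zeta$ (using the tangential gradient when $\zeta\in\partial\Omega\cap\overline{\Omega_\eta}$) and repeating the scheme for $\nabla_\zeta\tilde z$, noting that each building block $(\gamma(x)-\gamma(p))\cdot(x-p)/|x-p|^2$, $D_x R(x-p)$ and the reflected boundary data remains uniformly $L^p$-integrable after $\zeta$-differentiation. The main technical obstacle throughout is the uniform control of the Neumann data as $\zeta\to\partial\Omega$, where the singularities at $\zeta$ and $\zeta^*$ merge: the reflection is precisely designed so that this merging is compatible with the Neumann boundary condition, and Lemma \ref{boundaryterm} is the quantitative ingredient which makes this rigorous.
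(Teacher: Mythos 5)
Your proposal follows essentially the same route as the paper: define $z=H+4\ln|x-\zeta^*|$, derive its Neumann problem, obtain uniform $W^{2,p}$ bounds from the reflection cancellation in the boundary data, and then peel off the $R$-translates to isolate a $C^1$ remainder $\tilde z$, with $\zeta$-regularity via dominated convergence as in Theorem \ref{RegularPartInner}. One remark on signs, which is the only substantive point of divergence. Your computation of $\LL z$ and $\partial_\nu z$ (with a \emph{sum} of the $\zeta$- and $\zeta^*$-contributions) is the correct one; the paper's displays \eqref{EqnResidueregularpartGreen}--\eqref{EqnResidueregularpartGreen2boundary} carry a sign slip, and indeed it is the sum $\nu(x)\cdot\bigl[\tfrac{x-\zeta}{|x-\zeta|^2}+\tfrac{x-\zeta^*}{|x-\zeta^*|^2}\bigr]$, not the difference, that cancels near $\pp\Omega$ (the difference is a nascent Dirac mass as $\zeta\to\pp\Omega$). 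But this has a consequence you did not carry through: since $\LL z$ contains $+4\gamma(\zeta^*)\cdot\tfrac{x-\zeta^*}{|x-\zeta^*|^2}$, the correcting term must be \emph{subtracted}, i.e. $\tilde z:=z-4\gamma(\zeta)\cdot R(x-\zeta)-4\gamma(\zeta^*)\cdot R(x-\zeta^*)$. With the sign you wrote (copied from \eqref{zeta}), the singular term at $\zeta^*$ in $\LL\tilde z$ doubles rather than cancels, and since $\tfrac{1}{|x-\zeta^*|}$ is only uniformly in $L^p(\Omega)$ for $p<2$ when $\zeta^*$ approaches $\pp\Omega$, the $p>2$ bootstrap to $C^{1,\A}$ for $\tilde z$ would fail. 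The fix is a one-character sign change and your description of the mechanism (replacing each $\gamma(x)\cdot(x-p)/|x-p|^2$ by a bounded $(\gamma(x)-\gamma(p))\cdot(x-p)/|x-p|^2$) is exactly right; just make the formula consistent with it.
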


\begin{proof} Consider the function 
$$
z(x,\zeta):= H(x,\zeta) +4\ln\left(|x-\zeta^*|\right), \quad \quad \forall\,\, x,\in \overline{\Omega}, \quad \forall\,\, \zeta \in \Omega_{\eta} ,\quad x\neq \zeta.
$$

We directly compute from \eqref{EqnregularpartGreen2} and \eqref{righthand29}, to find that
\begin{equation}
\LL \, z(\cdot,\zeta)= 4\gamma(x)\cdot\left[\frac{x-\zeta}{|x-\zeta|^2}-\frac{x-\zeta^*}{|x-\zeta^*|^2}\right]\,-\, 4\left[\ln(|x-\zeta|)- \ln(|x-\zeta^*|)\right], \quad \quad \forall\,\, x\in \Omega\label{EqnResidueregularpartGreen}.
\end{equation}
with the boundary condition
\begin{equation}
\frac{\pp z(x,\zeta)}{\pp \nu_x}= 4\nu(x)\cdot\left[\frac{x-\zeta}{|x-\zeta|^2}- \frac{x-\zeta^*}{|x-\zeta^*|^2}\right] \quad \quad \forall\,\, x\in \pp \Omega
\label{EqnResidueregularpartGreen2boundary}.
\end{equation}

The right hand side in  equation \eqref{EqnResidueregularpartGreen} can be written as
$$
4\gamma(\zeta)\cdot\frac{x-\zeta}{|x-\zeta|^2} -4\gamma(\zeta^*)\cdot\frac{x-\zeta^*}{|x-\zeta^*|^2} + \tilde{E}(x,\zeta),
$$
where
\begin{multline}
\tilde{E}(x,\zeta):= 4\left(\gamma(x)-\gamma(\zeta)\right)\cdot \frac{x-\zeta}{|x-\zeta|^2} - 4\left(\gamma(x)-\gamma(\zeta^*)\right) \cdot\frac{x-\zeta^*}{|x-\zeta^*|^2}
\\
 \,-\, 4\left[\ln(|x-\zeta|)- \ln(|x-\zeta^*|)\right], \quad \quad \forall\,\zeta \in \Omega_{\eta}, \quad x\in \Omega. 
\end{multline}

\medskip
Proceeding in the same fashion as in the proof of Theorem \ref{RegularPartInner}, we obtain that the mapping $\zeta \in \overline{\Omega_{\eta}} \mapsto \tilde{E}(\cdot,\zeta)$ belongs to $C^1(\overline{\Omega_{\eta}};L^p(\Omega))$ for any $p\in (1,\infty)$. 

\medskip
To justify \eqref{zeta}, we use the function $R=R(z)$, from \eqref{ellipticequationR20}. We decompose $z(x,\zeta)$ as
$$
z(x,\zeta)=4\gamma(\zeta)\cdot R(x-\zeta) - 4\gamma(\zeta^*)\cdot R(x-\zeta^*) + \tilde{z}(x,\zeta)
$$
to find that if $x\in\Omega$
\begin{equation}
\LL \, \tilde{z}(x,\zeta)=-4\,\gamma(x)\cdot \left(\gamma(\zeta)\cdot D_{z} R(x-\zeta)\,-\,\gamma(\zeta^*)\cdot D_{z} R(x-\zeta^*)\right)+\tilde{E}(x,\zeta)\label{EqnResidueregularpartGreen1}.
\end{equation}
with the boundary condition if $x\in\pp\Omega$
\begin{equation}\frac{\pp \tilde{z}(x,\zeta)}{\pp \nu_x}=
 4\nu(x)\cdot\left[\frac{x-\zeta}{|x-\zeta|^2}- \frac{x-\zeta^*}{|x-\zeta^*|^2}- \left(\gamma(\zeta)\cdot D_{z} R(x-\zeta)\,-\,\gamma(\zeta^*)\cdot D_{z} R(x-\zeta^*)\right)\right].
\label{EqnResidueregularpartGreen2boundary1}
\end{equation}

From \eqref{EqnResidueregularpartGreen1} and \eqref{EqnResidueregularpartGreen2boundary1}, proceeding again as in the proof of Theorem \ref{RegularPartInner}, we obtain that the mapping 
$\zeta\in \overline{\Omega_{\eta}} \mapsto \tilde{z}(\cdot,\zeta)$ belongs to $C^1(\overline{\Omega_{\eta}}; C^1(\overline{\Omega}))$. This concludes the proof  of the proposition.   
\end{proof}

\medskip
{\bf Remark:} For further develoments we notice that from  Proposition \ref{asymptoticsof H} the function $H=H(x,\zeta)$ has continuous partial derivatives in the set $\Omega\times \Omega \setminus \{(x,\zeta)\,:\, x\neq \zeta\}$.

\medskip
Also, directly from Proposition \ref{asymptoticsof H} we obtain the following corollary.

\medskip
\begin{corol}\label{corolario}
Under the assumptions in Proposition 2.1, the Robin's function, 
$$
\zeta \in \Omega \mapsto H(\zeta,\zeta)$$  
satisfies that
\begin{equation}
{H}(\zeta,\zeta):= -4\ln\left(\rdist(\zeta,\pp \Omega)\right) + {\rm z}(\zeta), \quad \quad \forall \,\zeta \in \Omega_{\eta},\label{asymptoticsrobinfunction}
\end{equation}
where ${\rm z}\in C^1(\overline{\Omega_{\eta}})$ and
$$
{\rm z}(\zeta):= 4\gamma(\zeta)\cdot R(0) - \gamma(\zeta^*)\cdot R(\zeta - \zeta^*)+ \tilde{z}(\zeta,\zeta), \quad \quad \forall \,\,\zeta\in \Omega_{\eta}.
$$
\end{corol}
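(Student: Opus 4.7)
The strategy is to evaluate on the diagonal $x=\zeta$ the decomposition of $H(x,\zeta)$ provided by Proposition \ref{asymptoticsof H}. First, substituting $x=\zeta$ into
$$H(x,\zeta) = -4\ln(|x-\zeta^*|) + z(x,\zeta)$$
and invoking the identity $|\zeta-\zeta^*| = 2\,\rdist(\zeta, \pp\Omega)$, which holds for $\zeta\in\Omega_\eta$ by the very definition of the reflection, produces the singular logarithmic term $-4\ln(\rdist(\zeta,\pp\Omega))$; the additive constant $-4\ln 2$ is harmless and can be absorbed into the definition of ${\rm z}(\zeta)$.

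Next, I would expand $z(\zeta,\zeta)$ using formula \eqref{zeta}, obtaining
$$z(\zeta,\zeta) = 4\gamma(\zeta)\cdot R(\zeta-\zeta) - 4\gamma(\zeta^*)\cdot R(\zeta-\zeta^*) + \tilde{z}(\zeta,\zeta) = 4\gamma(\zeta)\cdot R(0) - 4\gamma(\zeta^*)\cdot R(\zeta-\zeta^*) + \tilde{z}(\zeta,\zeta),$$
which is exactly the claimed representation of ${\rm z}(\zeta)$ (modulo an apparent missing factor of $4$ in the middle term of the printed statement). The constant vector $R(0)$ is well-defined thanks to the H\"older continuity $R\in C^{0,1/p}(\overline{B_r(0)})$ supplied by \eqref{integrabilityR}.

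For the regularity assertion ${\rm z}\in C^1(\overline{\Omega_\eta})$ I would argue term by term. The term $\gamma(\zeta)\cdot R(0)$ is smooth on $\overline\Omega$, since $\gamma\in C^\infty(\overline\Omega)$ and $R(0)$ is a fixed vector. For $\gamma(\zeta^*)\cdot R(\zeta-\zeta^*)$, one uses that the reflection $\zeta\mapsto\zeta^*$ is smooth on $\Omega_\eta$ by the tubular neighborhood theorem, together with $R\in C^{\infty}(\R^2\setminus\{0\})$. Finally, the contribution $\tilde{z}(\zeta,\zeta)$ is $C^1$ by the chain rule, using that $\zeta\mapsto\tilde{z}(\cdot,\zeta)\in C^1(\overline{\Omega_\eta};C^1(\overline\Omega))$ from Proposition \ref{asymptoticsof H}.

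The main technical subtlety lies in the behavior of the composition $R(\zeta-\zeta^*)$ as $\zeta\to\pp\Omega$, where $\zeta-\zeta^*\to 0$ and $R$ ceases to be $C^1$. I would handle it by writing $4\gamma(\zeta)\cdot R(0) - 4\gamma(\zeta^*)\cdot R(\zeta-\zeta^*) = 4(\gamma(\zeta)-\gamma(\zeta^*))\cdot R(0) - 4\gamma(\zeta^*)\cdot\bigl(R(\zeta-\zeta^*) - R(0)\bigr),$ so that both summands vanish on $\pp\Omega$; then I would exploit the explicit structure $\zeta-\zeta^* = -2\,\rdist(\zeta,\pp\Omega)\,\nu(\pi(\zeta))$, where $\pi$ is the nearest-point projection on $\pp\Omega$, combined with the asymptotic behavior of $R$ near the origin inferred from the defining equation \eqref{ellipticequationR20}, to obtain the required boundary regularity.
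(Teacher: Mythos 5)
Your proposal is correct and follows essentially the same route as the paper, which obtains the corollary directly from Proposition \ref{asymptoticsof H} by setting $x=\zeta$ in the decomposition $H(x,\zeta)=-4\ln(|x-\zeta^*|)+z(x,\zeta)$ and using $|\zeta-\zeta^*|=2\,\rdist(\zeta,\pp\Omega)$ together with formula \eqref{zeta}. Your additional observations — that the constant $-4\ln 2$ must be absorbed into ${\rm z}$ and that the printed middle term is missing a factor of $4$ — correctly identify typographical slips in the stated corollary rather than gaps in the argument.
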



\medskip
\section{The approximation of the solution} \label{approximation}
In this part we find an appropriate aproximation for a solution of equation \eqref{emdenfowlerhighdim} which will allow us to carry out a reduction procedure. We also compute the error created by the choice of our approximation.

\medskip
To set up our approximation, we first identify the formal limit problem associated to equation \eqref{emdenfowlerhighdim}.

\medskip
Dividing by $a(x)$, equation \eqref{emdenfowlerhighdim} becomes 
\begin{equation}
\label{emdenfowlerneumannbdcond2}
\Delta u + \gamma(x)\cdot\nabla u - u + \ep^2 \, e^{u}=0 \quad \hbox{in }\Omega,\qquad
\frac{\pp u}{\pp \nu} =0 \quad \hbox{on }\pp \Omega.
\end{equation}

Take $\ep>0$ 
and set $\Omega_{\ep}:= \ep^{-1}\Omega$. If $u$ is a solution of \eqref{emdenfowlerhighdim}, the function
\begin{equation}\label{rescaling}
v(y)=4\,\ln(\ep) + u(\ep y), \quad \quad y\in \Omega_{\ep} 
\end{equation}
solves the equation
\begin{equation}
\label{emdenfowlerneumannbdcond22}
\Delta v + \ep \gamma(\ep y)\cdot\nabla v - \ep^2(v-4\ln(\ep)) + \, e^{v}=0 \quad \hbox{in }\Omega_{\ep},\quad \quad \frac{\pp v}{\pp \nu_{\ep}} =0 \quad \hbox{on }\pp \Omega_{\ep}
\end{equation}
where $\nu_{\ep}$ is the inner unit normal vector to $\pp \Omega_{\ep}$.

\medskip
From \eqref{emdenfowlerneumannbdcond2},  formally as $\ep\to 0^+$, we obtain the limit equation    
\begin{equation}\label{LiouvilleEqnR2}
\Delta V + e^V=0, \quad \hbox{in }\R^2, \quad \quad \nabla V \in L^2(\R^2).
\end{equation}

\medskip
Solutions to  \eqref{LiouvilleEqnR2} are given by
\begin{equation}\label{LiouvilleEqnnoscaling}
V(y):= \ln
\left(\frac{8\,d^2}{\left(d^2 + |y-\zeta'|^2\right)^2}\right), \quad \hbox{for } y \in \R^2
\end{equation}
where $d \in \R$ and $\zeta'\in \R^2$ are arbitrary parameters.

\medskip

Pulling back the rescaling in \eqref{rescaling}, for any $d>0$ and any $\zeta\in \R^2$, the function
\begin{equation}\label{LiouvilleEqnrescaling}
U_{d,\zeta}(x):= \ln
\left(\frac{8\,d^2}{\left(\ep^2 d^2 + |x-\zeta|^2\right)^2}\right), \quad \hbox{for } x \in \R^2
\end{equation}
solves the equation 
\begin{equation}\label{LiouvilleEqnR2rescaling}
\Delta U_{d,\zeta} + \ep^2e^{U_{d,\zeta}}=0, \quad \hbox{in }\R^2, \quad \quad \nabla_x U_{d,\zeta} \in L^2(\R^2).
\end{equation}

Let $m\in \mathbb{N}$ be fixed. Consider $m$ real numbers $d_i>0$ and $m$ arbitrary different points $\zeta_i \in \overline{\Omega}$. For every $i=1,\ldots,m$, define
\begin{equation}\label{LiouvilleEqnrescalingwithparameters}
U_i(x):= U_{d_i,\zeta_i}(x)=\ln\left(\frac{8d^2_i}{\left(\ep^2 \,d_i^2 + |x-\zeta_i|^2\right)^2}\right),\quad \quad  x\in \R^2. 
\end{equation}

Let $PU_{i}\in H^1(\Omega)$ be the solution of
\begin{equation}\label{EquationPU}
\Delta PU_{i}
+ \gamma(x) \cdot \nabla PU_{i} - PU_{i}+\ep^2e^{U_{i}}=0 \quad \hbox{in }\Omega, 
\qquad
\frac{\pp PU_{i}}{\pp \nu}=0 \quad \hbox{on }\pp \Omega.  
\end{equation}

By standard regularity theory, $PU_{i}\in C^{\infty}(\overline{\Omega})$, so that $PU_{i}$ is indeed a classical solution of \eqref{EquationPU}.

\medskip
Observe that each function $PU_{i}$ depends on $\ep>0$, $d_i$ and $\zeta_i$, but for notational simplicity, we unify this depedence using the subindex $i$.

\medskip
For every $i=1,\ldots,m$,  consider the function $H_{i}\in C^{\infty}(\overline{\Omega})$ given by
$$
H_{i}(x):=PU_{i}(x)- U_{i}(x), \quad \quad \hbox{for}\quad x\in \overline{\Omega}.
$$ 

From \eqref{EquationPU}, $H_{i}$ solves the equation 
\begin{equation}\label{EquationHdzeta}
\Delta H_{i} + \gamma(x)\cdot \nabla H_{i} - H_{i} = U_{i} - \gamma(x)\cdot \nabla U_{i} \quad \hbox{in }\Omega
\end{equation}
with the boundary condition
\begin{equation}
\label{BdConditionHdzeta}
\frac{\pp H_i}{\pp \nu} = -\frac{\pp U_{i}}{\pp \nu} \quad \hbox{on }\pp \Omega.
\end{equation}

\medskip
The following assumptions on the parameters $d_i$ and $\zeta_i$ will play a crucial role in what follows.

\medskip
We assume that for every $i=1,\ldots,m$, the parameters $d_i>0$ and $\zeta_i\in \overline{\Omega}$ depend on $\ep>0$. This dependence is expressed by the conditions:
\begin{equation}\label{assumptionsdi}
\lim_{\ep\to 0^+}\ep\, d^{\A}_i =0, \quad \quad \forall\, \A>0
\end{equation}
and in the case that $\zeta_i \in \Omega$, for some $c_0>0$ and for some $\kappa\geq 1$
\begin{equation}\label{assumptionszetai}
\rdist(\zeta_i,\pp \Omega)\geq c_0|\ln(\ep)|^{-\kappa}. 
\end{equation}

\medskip
Next, lemma concerns the asymptotic behavior of the functions $H_{i}$ in terms of $d_i$, $\zeta_i$ and $\ep>0$ small enough. For every fixed $i=1,\ldots,m$ and $\zeta_i\in \overline{\Omega}$, we will use the convention that 
$$
c_i:=
\left\{
\begin{array}{ccc}
1, & \zeta_i \in \Omega\\
\frac{1}{2}, &\zeta_i \in \pp \Omega.
\end{array}
\right.
$$

\medskip

\begin{lemma}\label{expansionHdzeta}
Assume conditions \eqref{assumptionsdi} and \eqref{assumptionszetai}. Then, for every $i=1,\ldots,m$ and every $\ep>0$ small enough, there exists a function $z_i$ such that
\begin{itemize}
\item[(i)] for every $x\in \Omega$ 
\begin{equation}\label{AsymptResiduePU}
H_{i}(x)= -\ln(8d_i^2) + c_i\,H(x,\zeta_i) + z_{i}(x),
\end{equation}
where $H=H_a$ is defined in \eqref{regularpartGreen1}
and
\medskip
\item[(ii)]  $\forall \,p\in(1,2)$, $z_i \in W^{2,p}(\Omega) \cap C(\overline{\Omega})$ and  
$$
\|z_{i}\|_{W^{2,p}(\Omega)}\,+\,\|z_{i}\|_{L^{\infty}(\Omega)}\leq C\ep^{\frac{1}{p}}d_i^{\frac{1}{p}},
$$
where the constant $C>0$ depedens only on $p$.
\end{itemize}
\end{lemma}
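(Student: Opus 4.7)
The plan is to define
\[
z_i(x) := H_i(x) + \ln(8 d_i^2) - c_i\, H(x,\zeta_i),
\]
and show it solves an auxiliary Neumann problem whose data are small in suitable $L^p$ norms. Writing $c_i'\in\{4,8\}$ for the constant from \eqref{regularpartGreen1} (depending on whether $\zeta_i\in\Omega$ or $\zeta_i\in\partial\Omega$), the crucial algebraic identity is $c_i\cdot c_i'=4$ in both cases; this produces exact cancellation of the logarithmic and dipole singularities contained in $U_i-\gamma\cdot\nabla U_i$ on one side, and in $c_i\LL_a H(\cdot,\zeta_i)$ on the other.

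First I derive the equation for $z_i$. Combining \eqref{EquationHdzeta}--\eqref{BdConditionHdzeta} with the explicit equations for $H(\cdot,\zeta_i)$ established in the proof of Theorem \ref{RegularPartInner}, and plugging in the identities $U_i(x)=\ln(8d_i^2)-2\ln(\ep^2 d_i^2+|x-\zeta_i|^2)$ and $\nabla U_i(x)=-4(x-\zeta_i)/(\ep^2 d_i^2+|x-\zeta_i|^2)$, direct algebra yields
\[
\LL_a z_i=-2\ln\!\Bigl(1+\tfrac{\ep^2 d_i^2}{|x-\zeta_i|^2}\Bigr)-\frac{4\ep^2 d_i^2\,\gamma(x)\cdot(x-\zeta_i)}{(\ep^2 d_i^2+|x-\zeta_i|^2)\,|x-\zeta_i|^2}\quad\text{in }\Omega,
\]
\[
\partial_\nu z_i=-\frac{4\ep^2 d_i^2\,\nu(x)\cdot(x-\zeta_i)}{(\ep^2 d_i^2+|x-\zeta_i|^2)\,|x-\zeta_i|^2}\quad\text{on }\partial\Omega.
\]

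Next I estimate the interior forcing and boundary datum via the scaling $x=\zeta_i+\ep d_i y$. For the logarithmic term the resulting integral converges and gives $\|\cdot\|_{L^p(\Omega)}\le C(\ep d_i)^{2/p}$. For the second interior term I decompose $\gamma(x)\cdot(x-\zeta_i)=\gamma(\zeta_i)\cdot(x-\zeta_i)+[\gamma(x)-\gamma(\zeta_i)]\cdot(x-\zeta_i)$: the remainder is $O(|x-\zeta_i|^2)$ by smoothness of $\gamma$ and contributes $O((\ep d_i)^{2/p})$, while the $\gamma(\zeta_i)\cdot(x-\zeta_i)/|x-\zeta_i|^2$ piece is precisely the source handled by the corrector function $R$ of \eqref{ellipticequationR20} via the decomposition of Proposition \ref{RegularPartInner}, absorbing this singular source through the already analyzed $R$. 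For the boundary datum with $\zeta_i\in\partial\Omega$, smoothness of $\partial\Omega$ gives $|\nu(x)\cdot(x-\zeta_i)|\le C|x-\zeta_i|^2$ near $\zeta_i$, hence $|\partial_\nu z_i|\le C\ep^2 d_i^2/(\ep^2 d_i^2+|x-\zeta_i|^2)$ and scaling yields $\|\partial_\nu z_i\|_{L^p(\partial\Omega)}\le C(\ep d_i)^{1/p}$; when $\zeta_i\in\Omega$, assumption \eqref{assumptionszetai} forces $|x-\zeta_i|\ge c_0|\ln\ep|^{-\kappa}$ on $\partial\Omega$, rendering the boundary term polynomially small in $\ep d_i$.

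Finally I apply standard $W^{2,p}$ elliptic regularity for $\LL_a$ with Neumann boundary conditions (coercive thanks to the $-1$ zeroth-order term), which combined with the above $L^p$ bounds yields $\|z_i\|_{W^{2,p}(\Omega)}\le C(\ep d_i)^{1/p}$. The Sobolev embedding \eqref{SobolevEmdebbings} then produces the $L^\infty$ bound. The main obstacle is handling the singular interior source $\gamma(\zeta_i)\cdot(x-\zeta_i)/|x-\zeta_i|^2$: a naive $L^p$ estimate on the forcing would give only $(\ep d_i)^{2/p-1}$, which is too weak. The way around is to exploit the $R$-decomposition of Proposition \ref{RegularPartInner} so that this exact dipole source is folded into the already-analyzed $R$, and elliptic regularity is applied only to the residual --- which is precisely where the careful analysis of Section \ref{anisotropic robin's function} pays off.
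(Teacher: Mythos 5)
Your overall strategy coincides with the paper's: you define $z_i:=H_i+\ln(8d_i^2)-c_iH(\cdot,\zeta_i)$, derive its Neumann problem (your interior equation and boundary datum are correct, including the factor $\ep^2 d_i^2/(\ep^2d_i^2+|x-\zeta_i|^2)$ in the normal derivative), estimate the data in $L^p$, and conclude by $W^{2,p}$ elliptic regularity plus the embedding \eqref{SobolevEmdebbings}. Your treatment of the logarithmic term and of the boundary datum (via Lemma \ref{boundaryterm} when $\zeta_i\in\pp\Omega$, and via \eqref{assumptionszetai} when $\zeta_i\in\Omega$) is exactly what the paper does.

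The one step that does not work is your proposed handling of the dipole term. The singular source in the $z_i$-equation is not the pure dipole $\gamma(\zeta_i)\cdot(x-\zeta_i)/|x-\zeta_i|^2$ but that dipole multiplied by the localization factor $\ep^2 d_i^2/(\ep^2d_i^2+|x-\zeta_i|^2)$. The function $R$ of \eqref{ellipticequationR20} corrects the pure dipole $z/|z|^2$, so subtracting $c\,\gamma(\zeta_i)\cdot R(x-\zeta_i)$ from $z_i$ leaves the residual source $-\gamma(\zeta_i)\cdot(x-\zeta_i)/(\ep^2d_i^2+|x-\zeta_i|^2)$, which is of size $|x-\zeta_i|^{-1}$ and has $L^p(\Omega)$-norm of order one: nothing is gained. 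Worse, $R(\cdot-\zeta_i)$ is itself an $O(1)$ function, so absorbing it into $z_i$ is incompatible with the smallness you are trying to prove; in Theorem \ref{RegularPartInner} the corrector $R$ is a regularity device (it isolates the non-$C^1$ part of $H$), not a smallness device. The good news is that the bound you dismiss as ``too weak'', namely $C(\ep d_i)^{2/p-1}$, is in fact sufficient: it is a positive power of $\ep d_i$ for every $p\in(1,2)$, it is precisely what the paper's own estimate of this term yields, and it is all that is used downstream (Lemma \ref{sizeof the errorSvep} only requires $\OO((\ep d_i)^{\A})$ for $\A\in(0,1)$, and every such $\A$ is reached by choosing $p$ close to $1$). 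So the correct repair is to keep the naive estimate and record the conclusion with exponent $2/p-1$ (equivalently, with an arbitrary $\A\in(0,1)$ in place of $1/p$); the exponent $1/p$ in the statement is attained only by the logarithmic and boundary contributions, not by the dipole one.
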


\begin{proof} For notational simplicity, throughout this proof, we omit the subindex $i$. Let $\ep>0$ be small, $d>0$ and $\zeta \in \overline{\Omega}$. 

\medskip

For $x\in \Omega$, we have that $H(x):= PU(x) - U(x)$. We set for $x\in \overline{\Omega}$
$$
z(x) := H(x) + \ln(8d^2) - 
\left\{
\begin{array}{ccc}
H(x,\zeta), & \hbox{if } \quad \zeta \in \Omega,\\
\\
\frac{1}{2}H(x,\zeta), & \hbox{if } \quad \zeta \in \pp \Omega.
\end{array}
\right.
$$ 

\medskip
Recall from \eqref{linearpartanisotropiceqn} that $\LL= \Delta + \gamma(x)\cdot \nabla -1$. From \eqref{EqnregularpartGreen2} and \eqref{EquationHdzeta}, the equation for $z$ reads as 
$$
\LL\, z(x) = 2\ln\left(\frac{|x-\zeta|^2}{\ep^2 d^2 + |x-\zeta|^2}\right) + 4\gamma(x)\cdot \frac{(x-\zeta)}{|x-\zeta|^2}\cdot \frac{\ep^2 d^2}{\ep^2d^2 + |x-\zeta|^2} \quad \hbox{in }\Omega,
$$
$$
\frac{\pp z}{\pp \nu} = -4\nu(x)\cdot \frac{(x-\zeta)}{|x-\zeta|^2}\cdot \frac{1}{\ep^2d^2 + |x-\zeta|^2} \quad \hbox{on }\pp \Omega.
$$

\medskip

For any $p>1$, we estimate
\begin{eqnarray*}
\int_{\Omega}\left|\ln\left(\frac{|x-\zeta|^2}{\ep^2 d^2 + |x-\zeta|^2}\right)\right|^pdx &\leq & C\int_{0}^{2\, {\rm diam}(\Omega)}r\left|\ln\left(\frac{r^2}{\ep^2 d^2 +r^2}\right)\right|^p dr\\
&=& C\ep^2 d^2 \int_{0}^{\frac{2{\rm diam}(\Omega)}{\ep\,d}}r\left|\ln\left(1+\frac{1}{r^2}\right)
\right|^pdr\\
&\leq &C \ep^2 d^2\, \left(\int_0^{1}r\left|\ln\left(1+\frac{1}{r^2}\right)
\right|^pdr+\int_{1}^{\infty}r^{1-2p}dr\right)\\
&\leq& C \ep^2 d^2.
\end{eqnarray*}

\medskip

Hence, we obtain that
\begin{equation}\label{EQNrighthandsideI}
\left\|\ln\left(\frac{|x-\zeta|^2}{\ep^2 d^2 + |x-\zeta|^2}\right)\right\|_{L^p(\Omega)} \leq  C \ep^{\frac{2}{p}} d^{\frac{2}{p}}.
\end{equation}

\medskip
As for the second term, let us take $p\in(1,2)$, so that 
\begin{eqnarray*}
\int_{\Omega}\left|\frac{\gamma(x)\cdot (x-\zeta)}{|x-\zeta|^2}\cdot \frac{\ep^2 d^2}{\ep^2d^2 + |x-\zeta|^2}\right|^p dx&\leq & C\ep^{2-p} d^{2-p} \int_{0}^{\frac{2{\rm diam}(\Omega)}{\ep d}} \frac{r^{1-p}}{(1+r^2)^p}dr\\
&\leq &C\ep^{2-p} d^{2-p} \left(\int_{0}^{1} \frac{r^{1-p}}{(1+r^2)^p}dr+\int_1^{\infty}r^{1-p}dr
\right)
\\
&\leq &  C\ep^{2-p} d^{2-p}
\end{eqnarray*}
and therefore
\begin{equation}\label{EQNrighthandsideII}
\left\|\frac{\gamma(x)\cdot (x-\zeta)}{|x-\zeta|^2}\cdot \frac{\ep^2 d^2}{\ep^2d^2 + |x-\zeta|^2}\right\|_{L^p(\Omega)}\leq C \ep^{\frac{2}{p}-1}d^{\frac{2}{p}-1}.
\end{equation}

As for the boundary term, if $\zeta\in \Omega$ we use conditions \eqref{assumptionsdi}  and \eqref{assumptionszetai}, to find that, provided $\ep>0$ and $\ep d$ are small enough 
$$
\left|\frac{\nu(x)\cdot (x-\zeta)}{|x-\zeta|^2}\cdot\frac{\ep^2 d^2}{\ep^2 d^2+ |x-\zeta|^2}\right|\leq \frac{C\ep^2 d^2}{|x-\zeta|^3}\leq C\ep d, \quad \quad \forall \,x\in \pp \Omega,
$$

On the other hand, if $\zeta \in \pp \Omega$, from Lemma \eqref{boundaryterm}, we estimate
$$
\left|\frac{\nu(x)\cdot (x-\zeta)}{|x-\zeta|^2}\cdot\frac{\ep^2 d^2}{\ep^2 d^2+ |x-\zeta|^2}\right|\leq \frac{C\ep^2 d^2}{\ep^2 d^2+ |x-\zeta|^2}, \quad \quad \forall \,x\in \pp \Omega.
$$

Let us take $\D>0$ small but fixed,  so that
$$
\left|\frac{\nu(x)\cdot (x-\zeta)}{|x-\zeta|^2}\cdot\frac{\ep^2 d^2}{\ep^2 d^2+ |x-\zeta|^2}\right|\leq \frac{C\ep^2 d^2}{\D^2}, \quad \quad \forall \,x\in \pp \Omega\cap B^c_{\D}(\zeta)
$$
while for any $p>1$, we estimate
\begin{eqnarray}
\int_{\pp \Omega\cap B_{\D}(\zeta)} \left|\frac{\nu(x)\cdot (x-\zeta)}{|x-\zeta|^2}\cdot\frac{\ep^2 d^2}{\ep^2 d^2+ |x-\zeta|^2}\right|^p dx &\leq & C \ep d \int_{0}^{\frac{\D}{\ep d}}\frac{1}{(1 + s^2)^p}ds\\
&\leq & C \ep d.
\end{eqnarray}

We conclude that for any $\zeta \in \overline{\Omega}$
\begin{equation}
\label{EQNrighthandsideBdCondition}
\left\|\frac{\nu(x)\cdot (x-\zeta)}{|x-\zeta|^2}\cdot\frac{\ep^2 d^2}{\ep^2 d^2+ |x-\zeta|^2}\right\|_{L^p(\pp \Omega)}\leq C\ep^{\frac{1}{p}}d^{\frac{1}{p}}.
\end{equation}

\medskip
Standard elliptic regularity implies that for any $p\in (1,2)$, $z\in W^{2,p}(\Omega)$. The Sobolev embeddings in \eqref{SobolevEmdebbings} together with estimates \eqref{EQNrighthandsideI}, \eqref{EQNrighthandsideII} and \eqref{EQNrighthandsideBdCondition},  imply that
$$
\|z\|_{L^{\infty}(\Omega)}+\|z\|_{W^{2,p}(\Omega)}\leq C\ep^{\frac{1}{p}} d^{\frac{1}{p}}
$$
and this concludes the proof of the lemma.
\end{proof}

Now we are in a position to set our approximation. Consider the parameters $d_1, \ldots,d_m \in \R_+$ and $\zeta_1, \ldots,\zeta_m \in \overline{\Omega}$ satisfying \eqref{assumptionsdi} and \eqref{assumptionszetai} respectively. In addition, assume that 
\begin{equation}\label{logconditiondi}
\ln\left(8 d_i^2\right)= c_iH(\zeta_i,\zeta_i) \,+ \, \sum_{j\neq i} c_jG(\zeta_i,\zeta_j), \quad \quad \forall\,i=1,\ldots,m.
\end{equation}

We set as approximation the function 
\begin{equation}\label{approx}
u_{\ep}(x):=\sum_{i=1}^m PU_i(x)=\sum_{i=1}^m U_i(x)+ H_i(x), \quad \quad \hbox{for }x\in \overline{\Omega}.
\end{equation}

Using the rescaling in \eqref{rescaling}, we also set for every $y\in \Omega_{\ep}$
\begin{eqnarray}
v_{\ep}(y)&:=& 4\,\ln(\ep) + u_{\ep}(\ep y)\label{rescalingapprox}\\
&=& 4(1 - m)\ln(\ep) + \sum_{i=1}^m \ln\left(\frac{8d_i^2}{\left(d_i^2 \,+\, |y -\zeta_i'|^2\right)^2}\right) + H_i(\ep y),\nonumber
\end{eqnarray}
where we have denoted $\zeta_i':= \frac{\zeta_i}{\ep}$. 

For our subsequent developments, we introduce another condition on the numbers $d_1,\ldots,d_m$ and the points $\zeta_1,\ldots,\zeta_m\in \overline{\Omega}$. Consider the real number 
\begin{equation}\label{distancezetaizetaj}
c_{\ep}:=\min\{|\zeta_i -\zeta_j|\,:\, i,j=1,\ldots,m, \quad i\neq j\}
\end{equation}
which is well defined, positive and uniformly bounded above, since $\Omega$ is bounded. We assume in addition that  
\begin{equation}\label{nottoclosezetaij}
\lim \limits_{\ep\to 0^+} \frac{c_{\ep}}{\ep\,d_i}=\infty, \quad \quad \forall \,i=1,\ldots,m.
\end{equation}

Condition \eqref{nottoclosezetaij} means that as $\ep\to 0^+$, the numbers $d_i$ might go to infinity, but at a rate that is controlled by the distance between the points $\zeta_i$.

\medskip

Let us denote
$$
W:= e^{v_{\ep}}, \quad \quad S(v_{\ep}):= \Delta v_{\ep} + \ep \gamma(\ep y)\cdot\nabla v_{\ep} - \ep^2(v_{\ep}-4\ln(\ep)) + \, e^{v_{\ep}}.
$$

Next lemma provides the size of the error term $S(v_{\ep})$. 
\medskip
\begin{lemma}\label{sizeof the errorSvep}
Assume hypotheses in Lemma \ref{expansionHdzeta} and conditions \eqref{logconditiondi} and \eqref{nottoclosezetaij}. Then for any $\A, \beta \in (0,1)$, there exists $\ep_0>0$ small such that for any $\ep\in (0,\ep_0)$, there exists a function $\theta_{\ep}(y)$ such that 
$$
|\theta_{\ep}(y)|\leq C \ep^{3+\A} + C \ep^{\beta} \sum_{i=1}^m |y-\zeta_i'|^{\beta}, \quad \quad \forall\, y\in \Omega_{\ep}
$$
and 
\begin{equation}\label{sizeW}
W(y)= \sum_{i=1}^m \frac{8\,d_i^2}{\left(\ep^2d_i^2 \,+\,|y-\zeta_i'|\right)^2}\left(1 + \theta_{\ep}(y)\right), \quad \quad \forall\, y\in \Omega_{\ep}.
\end{equation}

Even more, 
\begin{equation}\label{sizeSvep}
|S(v_{\ep})(y)|\leq C \ep^{\A}\sum_{i=1}^m \frac{1}{1+ |y-\zeta_i'|^3}, \quad \quad \forall\, y\in \Omega.
\end{equation}
\end{lemma}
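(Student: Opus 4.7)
The starting point is a direct algebraic observation: since each $PU_i$ solves \eqref{EquationPU} in the original variable, the rescaling $v_\ep(y) = 4\ln\ep + u_\ep(\ep y)$ yields, for $y \in \Omega_\ep$, the identity
$$
S(v_\ep)(y) = e^{v_\ep(y)} - \ep^4 \sum_{i=1}^m e^{U_i(\ep y)} = W(y) - W_0(y),
$$
where $W_0(y) := \sum_i 8d_i^2/(d_i^2+|y-\zeta_i'|^2)^2 = \ep^4 \sum_i e^{U_i(\ep y)}$. Hence the pointwise bound \eqref{sizeSvep} will follow from the factorization \eqref{sizeW} upon multiplication by $W_0$, combined with the elementary inequality $d^2/(d^2+r^2)^2 \leq Cd/(1+r^3)$ valid for $d\geq 1$ and $r\geq 0$, and assumption \eqref{assumptionsdi} which lets us absorb the factor $d_i$ into the $\ep$-powers. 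The main task is therefore to establish \eqref{sizeW}.

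Working in log-scale, substituting Lemma \ref{expansionHdzeta} into \eqref{rescalingapprox} makes the terms $\pm\ln(8d_i^2)$ cancel, leaving
$$
v_\ep(y) = 4(1-m)\ln\ep - 2\sum_i \ln\bigl(d_i^2+|y-\zeta_i'|^2\bigr) + \sum_i \bigl[c_i H(\ep y,\zeta_i) + z_i(\ep y)\bigr].
$$
I would then split $\Omega_\ep$ into the pairwise disjoint balls $\mathcal{B}_k := \{y\in\Omega_\ep\,:\,|y-\zeta_k'|<c_\ep/(2\ep)\}$ (disjoint by \eqref{distancezetaizetaj}; moreover \eqref{nottoclosezetaij} forces $d_k \ll c_\ep/\ep$) and their complement in $\Omega_\ep$, and analyze $W$ separately on each region.

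Inside $\mathcal{B}_k$, for each $j\ne k$ one uses $|y-\zeta_j'| = |\zeta_j'-\zeta_k'|(1 + O(\ep|y-\zeta_k'|/c_\ep))$ and $d_j^2/|\zeta_j'-\zeta_k'|^2 = o(1)$ to Taylor expand $-2\ln(d_j^2+|y-\zeta_j'|^2)$ around $y=\zeta_k'$; for $j=k$ the expression is kept as is. The Hölder regularity of $H$ from Theorem \ref{RegularPartInner} yields $c_j H(\ep y,\zeta_j) = c_j H(\zeta_k,\zeta_j) + O(\ep^\alpha|y-\zeta_k'|^\alpha)$, and $z_i(\ep y) = O(\ep^{1/p} d_i^{1/p})$ by Lemma \ref{expansionHdzeta}(ii). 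The crucial algebraic cancellation is produced by the identity $c_j H(\zeta_k,\zeta_j) - 4\ln|\zeta_k-\zeta_j| = c_j G(\zeta_k,\zeta_j)$ (valid uniformly for $\zeta_j \in \overline{\Omega}$, since $c_j\cdot 4 = 4$ when $\zeta_j\in\Omega$ and $c_j\cdot 8 = 4$ when $\zeta_j\in\pp\Omega$), together with the matching condition \eqref{logconditiondi}: the constant terms collapse precisely to $\ln(8d_k^2)$, giving
$$
v_\ep(y) = \ln\frac{8d_k^2}{(d_k^2+|y-\zeta_k'|^2)^2} + \theta_\ep^{(k)}(y)
$$
with $\theta_\ep^{(k)}$ of the claimed form. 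Exponentiating, together with the fact that in $\mathcal{B}_k$ the $k$-th summand dominates $W_0$ (the remaining ones being of size $O(\ep^4 d_j^2/c_\ep^4)$), gives \eqref{sizeW} on $\mathcal{B}_k$.

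Outside $\bigcup_k \mathcal{B}_k$, all $|y-\zeta_i'|\geq c_\ep/(2\ep)$ are uniformly large, each $-2\ln(d_i^2+|y-\zeta_i'|^2)$ reduces to $-4\ln|y-\zeta_i'|$ plus a negligible correction, and both $W$ and $W_0$ turn out to be of the same order, $\sim\ep^4\sum_i d_i^2/|x-\zeta_i|^4$. In this far region the relative error $\theta_\ep$ is only $O(1)$, which is precisely why the additive summand $C\ep^\beta\sum_i|y-\zeta_i'|^\beta$ appears in the bound: on $\Omega_\ep$ one has $|y-\zeta_i'|\leq C/\ep$, so this summand is already of order one exactly where the expansion loses its smallness. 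Combining the two regions yields \eqref{sizeW}, and \eqref{sizeSvep} follows as already explained. The hardest part is the inner region $\mathcal{B}_k$: one must carefully align the Hölder exponent from Theorem \ref{RegularPartInner}, the $W^{2,p}$-estimate from Lemma \ref{expansionHdzeta}(ii), the matching condition \eqref{logconditiondi}, and the small parameters $\ep d_i,\,c_\ep^{-1}$ so that all residual errors organize precisely into the form $C\ep^{3+\alpha} + C\ep^\beta\sum_i|y-\zeta_i'|^\beta$.
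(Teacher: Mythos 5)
Your proposal follows essentially the same route as the paper's proof: the same identity $S(v_{\ep})=W-W_0$, the same substitution of Lemma \ref{expansionHdzeta} into the rescaled ansatz, the same splitting of $\Omega_{\ep}$ into balls of radius comparable to $c_{\ep}/\ep$ around each $\zeta_i'$ versus their complement, and the same collapse of the constant terms via the H\"older continuity of $H$ from Theorem \ref{RegularPartInner} and the matching condition \eqref{logconditiondi}. The only cosmetic difference is that you obtain \eqref{sizeSvep} purely from the multiplicative form \eqref{sizeW}, whereas the paper keeps a separate additive $\OO(\ep^{3+\A})$ contribution from the far region; both versions require the same choice $\beta<\A$ and the same absorption of powers of $d_i$ via \eqref{assumptionsdi}.
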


\begin{proof}
We proceed as in the proof of expressions (20) and (21) in \cite{DELPINOWEI}. For the sake of completeness we include the detailed computations. 

\medskip
We first prove \eqref{sizeW}. Let $\ep>0$ be small. Observe that for any $i=1,\ldots,m$ and any $y\in \Omega_{\ep}$
\begin{eqnarray*}
v_{\ep}(y)&=&4\ln(\ep) + U_i(\ep y) + H_i(\ep y)+ \sum_{j\neq i}\left(U_j(\ep y) + H_j(\ep y)\right)\\
&=&\ln\left(\frac{8d^2_i}{(d^2_i + |y-\zeta'_i|^2)^2}\right)+ H_i(\ep y)+ \sum_{j\neq i}\left(U_j(\ep y) + H_j(\ep y)\right).
\end{eqnarray*}

Using \eqref{AsymptResiduePU}, we find that
\begin{eqnarray*}
v_{\ep}(y)&=& \ln\left(\frac{8d^2_i}{(d^2_i + |y-\zeta'_i|^2)^2}\right)- \ln(8 d_i^2) + c_i H(\ep y, \zeta_i) + z_i(\ep y) +\\
&&+ \sum_{j\neq i}\left(\ln\left(\frac{8d^2_j}{(\ep^2 d^2_j + |\ep y-\zeta_j|^2)^2}\right)-\ln(8 d_j^2) + c_jH(\ep y, \zeta_j) + z_j(\ep y)\right).
\end{eqnarray*}

Therefore, for any $\A\in (0,1)$,
\begin{multline}\label{vepfirstasymptotics}
v_{\ep}(y)=\ln\left(\frac{1}{(d^2_i + |y-\zeta'_i|^2)^2}\right)+ c_i H(\ep y, \zeta_i)\\
+ \sum_{j\neq i}\left(\ln\left(\frac{1}{(\ep^2 d^2_j + |\ep y-\zeta_j|^2)^2}\right) + c_j H(\ep y, \zeta_j)\right)
+\sum_{j=1}^m\OO_{L^{\infty}(\Omega_{\ep})}(\ep^{\A}d_j^{\A}).
\end{multline}

To estimate more accurately \eqref{vepfirstasymptotics}, we first notice from Theorem \ref{RegularPartInner} that for any fixed $j=1,\ldots,m$ and any $\beta\in (0,1)$,
\begin{equation}\label{HolderregularpartGreensFnct}
H(\ep y,\zeta_j)=H(\zeta_i,\zeta_j)+ \OO_{L^{\infty}(\Omega_{\ep})}(|\ep y - \zeta_i|^{\beta}), \quad \quad \forall\, y\in \Omega_{\ep}.
\end{equation}

Next, we consider two regimes: the first one close to the point $\zeta_i'$ and the second one when we are far from $\zeta_i'$. To be more precise, take any $\tilde{\A}\in (0,1)$ and $\D_{\ep}\in (0,(1-\tilde{\A}) c_{\ep}]$, where $c_{\ep}$ is defined in \eqref{distancezetaizetaj}. Notice that for any $j\neq i$ and any $y\in B_{\frac{\D_{\ep}}{\ep}}(\zeta'_i)$
\begin{eqnarray*}
|y-\zeta'_j|
\geq  \frac{|\zeta_i-\zeta_j|}{\ep} - |y-\zeta'_i| \geq \frac{\tilde{\A}\,|\zeta_i-\zeta_j|}{\ep},
\end{eqnarray*}
so that $|\ep y -\zeta_j|\geq \tilde{\A} c_{\ep}$.

\medskip
Using conditions \eqref{assumptionsdi} and \eqref{nottoclosezetaij}, for any $y\in B_{\frac{\D_{\ep}}{\ep}}(\zeta_i')$ and any $j\neq i$, we compute 
\begin{small}
\begin{eqnarray}\label{eqnasymptitoclog}
\ln\left(\frac{1}{(\ep^2 d^2_j + |\ep y-\zeta_j|^2)^2}\right)&=&-4\ln\left(|\ep y-\zeta_j|\right)+ \OO_{L^{\infty}\left(B_{\frac{\D_{\ep}}{\ep}}(\zeta_i')\right)}(\ep^2d_j^2 |y-\zeta'_j|^{-2})\nonumber\\
&=&-4\ln(|\ep y -\zeta_j|) + \OO_{L^{\infty}\left(B_{\frac{\D_{\ep}}{\ep}}(\zeta_i')\right)}(\ep^4 d_j^4)\nonumber\\
&=&-4\ln(|\zeta_i -\zeta_j|) + \OO_{L^{\infty}\left(B_{\frac{\D_{\ep}}{\ep}}(\zeta_i')\right)}\left(\ep^{4-\tilde{\A}} + \ep\,|y-\zeta_i'|\right).
\end{eqnarray}
\end{small}

Therefore, using expressions \eqref{HolderregularpartGreensFnct} and \eqref{eqnasymptitoclog}, we can choose $\tilde{\A}>1-\A$ such that for any $y\in B_{\frac{\D_{\ep}}{\ep}}(\zeta'_i)$ expression \eqref{vepfirstasymptotics} reads as
\begin{eqnarray}
v_{\ep}(y)&=&\ln\left(\frac{1}{(d^2_i + |y-\zeta'_i|^2)^2}\right)+ c_iH(\zeta_i, \zeta_i) + \sum_{j\neq i} \left(-4\ln(|\zeta_i -\zeta_j|)  + c_j\,H(\zeta_i, \zeta_j)\right)
\nonumber\\
&&+ \OO_{L^{\infty}\left(B_{\frac{\D_{\ep}}{\ep}}(\zeta_i')\right)}\left(\ep^{4-\tilde{\A}} + \ep \,d_i|y-\zeta_i'|\right)+ \OO_{L^{\infty}\left(\Omega_{\ep}\right)}(|\ep y - \zeta_i|^{\beta})\nonumber\\
&=&\ln\left(\frac{1}{(d^2_i + |y-\zeta'_i|^2)^2}\right)+ c_iH(\zeta_i, \zeta_i) 
+ \sum_{j\neq i} c_jG(\zeta_i,\zeta_j)\nonumber\\
&&+ \OO_{L^{\infty}\left(B_{\frac{\D_{\ep}}{\ep}}(\zeta_i')\right)}\left(\ep^{3+\A} + \ep^{\beta}|y-\zeta_i'|^{\beta}\right)\nonumber\\
&=& \ln\left(\frac{8d_i^2}{(d^2_i + |y-\zeta'_i|^2)^2}\right)+ \OO_{L^{\infty}\left(B_{\frac{\D_{\ep}}{\ep}}(\zeta_i')\right)}\left(\ep^{3+\A} + \ep^{\beta}|y-\zeta_i'|^{\beta}\right).\label{vepsecondasymptotics}
\end{eqnarray}

Directly from the identities in \eqref{vepsecondasymptotics}, we obtain that in $B_{\frac{\D_{\ep}}{\ep}}(\zeta_i')$
\begin{equation}\label{Wfirstestimate}
e^{v_{\epsilon}(y)}=\frac{8d_i^2}{(d_i^2 + |y-\zeta'_i|^2)^2}\,e^{\OO\left(\ep^{3+\A} + \ep^{\beta}|y-\zeta_i'|^{\beta}\right)}.
\end{equation}

On the other hand, from \eqref{assumptionsdi} and \eqref{nottoclosezetaij}, we can choose $\D_{\ep}$ such that for every $i=1,\ldots,m$, {$\frac{\D_{\ep}}{\ep d_i}$ is bounded below by $c \ep^{1-\A}d_i^{1-\A}$}. Hence, if $|y-\zeta_i'|\geq \frac{\D_{\ep}}{\ep}$, $\frac{|\ep y - \zeta_i|}{\ep d_i}\geq \frac{\D_{\ep}}{\ep d_i}$ and from \eqref{rescalingapprox} we find that that \begin{eqnarray*}
v_{\ep}(y)&=&4\ln(\ep) -\sum_{i=1}^m -4\ln(|\ep y - \zeta_i|) + \OO_{L^{\infty}}(\ep^2 d_i^2 |\ep y - \zeta_i|^{-2})\\
&=& 4\ln(\ep) - \sum_{i=1}^m 4\ln(|\ep y - \zeta_i|) + \OO_{L^{\infty}(\Omega)}(\ep^{\A}d_i^{\A})\\
&=& 4\ln(\ep) + \OO_{L^{\infty}(\Omega)}\left(\ln(\D_{\ep})\right).
\end{eqnarray*}

Hence, for $y\in \Omega_{\ep}-\cup_{i=1}^m B_{\frac{\D_{\ep}}{\ep}}(\zeta_i')$, we have that
$$
e^{v_{\ep}(y)}=\OO_{L^{\infty}(\Omega_{\ep}-\cup_{i=1}^m B_{\frac{\D_{\ep}}{\ep}}(\zeta_i'))}(\ep^{3+\A}).
$$

Denoting by $\chi_{B}$ the characteristic function of a set $B\subset \R^2$, we write for $y\in \Omega_{\ep}$ 
\begin{eqnarray*}
e^{v_{\ep}(y)}&:=&\sum_{i=1}^m \left(e^{v_{\ep}(y)}\chi_{B_{\frac{\D_{\ep}}{\ep}}(\zeta_i')}+e^{v_{\ep}(y)}\chi_{\Omega_{\ep}-\cup_{i=1}^m B_{\frac{\D_{\ep}}{\ep}}(\zeta_i')}\right)\\
&=&\sum_{i=1}^m\frac{8d_i^2}{(d_i^2 + |y-\zeta'_i|^2)^2}\,e^{\OO(\ep^{3+\A}+ |\ep y - \zeta_i|^{\beta})} +\OO(\ep^{3+\A})
\end{eqnarray*}
and asymptotics in \eqref{sizeW} follow.

\medskip
To find estimate \eqref{sizeSvep}, we use the fact that 
\begin{eqnarray*}
S(v_{\ep}(y))&=&e^{v_{\ep}(y)} -\sum_{i=1}^m e^{4\ln(\ep)+ U_i(\ep y)}\\
&=&\sum_{i=1}^m \left(e^{v_{\ep}(y)}-e^{4\ln(\ep)+U_i(\ep y)}\right)\chi_{B_{\frac{\D_{\ep}}{\ep}}(\zeta_i')}\\
&&+ \left(e^{v_{\ep}(y)}-\ep^4\sum_{i=1}^m e^{U_i(\ep y)}\right)\chi_{\Omega_{\ep}-\cup_{i=1}^m B_{\frac{\D_{\ep}}{\ep}}(\zeta_i')}\\
&=&\sum_{i=1}^m \frac{8d_i^2}{(d_i^2 + |y-\zeta_i'|^2)^2}\theta_{\ep}(y) + \OO(\ep^{3 + \A})
\end{eqnarray*} 
from where \eqref{sizeSvep} follows.
\end{proof}

\section{The reduction scheme}\label{Reductionscheme}

In this part we use the approximation \eqref{approx} described in section \ref{approximation} to solve equation \eqref{emdenfowlerhighdim} using a finite dimensional reduction reduction procedure. 

\medskip
We use the convention that $m=k+l$ for some $k,l \in \{0,\ldots,m\}$ and that 
$$
\zeta_1,\ldots,\zeta_k \in \Omega \quad \hbox{and}\quad \zeta_{k+1},\ldots,\zeta_{k+l} \in \pp \Omega.
$$

\medskip

It will be more convenient to work with the rescaling \eqref{rescaling}. Using the function described in \eqref{rescalingapprox}, we look for a solution $V_{\ep}$ of equation \eqref{emdenfowlerneumannbdcond22} having the form
$$
V_{\ep}(y):=v_{\ep}(y) + \phi(y), \quad \quad y\in \Omega_{\ep}
$$
so that $\phi$ must solve the nonlinear boundary value problem \begin{equation}\label{nonlineareqnphi1}
\Delta \phi -\ep^2 \phi = -S(v_{\ep})-e^{v_{\ep}}\phi -\ep \gamma(\ep y) \cdot \nabla\phi- N(\phi) \quad \hbox{in} \quad \Omega_{\ep}
\end{equation}
with the boundary condition
\begin{equation}
\label{boundaryconditionphi1}
\frac{\pp \phi}{\pp \nu}=0 \quad \hbox{on} \quad  \pp \Omega_{\ep},
\end{equation}
where we have denoted 
$$
N(\phi):= e^{v_{\ep}}\left[e^{\phi} -1 - \phi\right].
$$

\medskip
To solve \eqref{nonlineareqnphi1}-\eqref{boundaryconditionphi1}, we follow the developments from section 3 in \cite{DELPINOWEI}. 

\medskip 
For $i=1,\ldots,m$ fixed, we write 
$$
J_i:=\left\{
\begin{array}{ccc}
2,& \hbox{for } 1 \leq i \leq k\\
\\
1,& \hbox{for } k+1 \leq i \leq k+l. 
\end{array}
\right.
$$

Consider the following linear problem: given a function $h \in L^{\infty}(\Omega_{\ep})$, find $\phi \in C^1(\overline{\Omega_{\ep}})$ and constants $c_{ij}$ for $i=1,\ldots,m$ and for $j=1,J_i$ such that
\begin{equation}\label{lineareqanphi1}
\Delta \phi -\ep^2 \phi = -e^{v_{\ep}}\phi + h +
\sum_{i=1}^m \sum_{j=1,J_i}c_{ij}\chi_{ij} Z_{ij} \quad \hbox{in }\quad \Omega_{\ep}
\end{equation}
with the boundary and orthogonality conditions
\begin{equation}
\label{boundaryorthogonalityconditionphi1}
\frac{\pp \phi}{\pp \nu}=0 \quad \hbox{on} \quad  \pp \Omega_{\ep}, \quad \quad \int_{\Omega_{\ep}}\chi_{ij}Z_{ij}\phi =0, \quad \forall\, i=1,\ldots,m, \quad j=1,J_i,
\end{equation}
where the functions $Z_{ij},\chi_{\ij}$ are next defined.

\medskip
For $i=1,\ldots,m$, we set
$$
z_{i0}(y):= \frac{1}{d_i} - \frac{2 d_i}{d_i^2 + |y|^2}, \quad \quad z_{ij}= \frac{y_j}{d_i^2 + |y|^2}, \quad \forall j=1,J_i
$$

For any $i=1,\ldots,m$ fixed, Lemma 2.1 in \cite{ESPOSITOGROSSIPISTOIA} guarantees that the only solutions to 
$$
\Delta \phi + \frac{8d_i^2}{d_i^2 + |y|^2}\phi =0 \quad \hbox{in }\R^2, \quad \quad |\phi(y)|\leq C(1 +|y|^{\sigma}), \quad \hbox{for some }\sigma>0
$$
are the linear  combinations of $z_{ij}(y)$ for $j=0,1,2$. Observe that 
$$
\frac{8d_i^2}{d_i^2 + |y|^2}=e^{V(y)}, \quad y\in \R^2,
$$
where $V$ is the function in \eqref{LiouvilleEqnnoscaling}
with $d=d_i$ and $\zeta'=0$.

\medskip

Next, let $r_0>0$ be a large number and $\chi:\R \to [0,1]$  be a non-negative  smooth cut-off function so that 
$$
\chi(r)=\left\{
\begin{array}{ccc}
1,& \hbox{if} \quad r\leq r_0\\
\\
0,& \hbox{if } \quad r\geq r_0+1.
\end{array}
\right.
$$

For $i=1,\ldots,k$, we have $\zeta_i\in \Omega$ and we define
$$
\chi_i(y):=\chi(|y-\zeta_i'|), \quad \quad Z_{ij}(y):=\chi_i(y)\,z_{ij}(y-\zeta_i') , \quad j=1,2.
$$

As for $i=k+1, \ldots,k+l$, we have that $\zeta_i \in \pp \Omega$. For notational simplicity, assume for the moment that $\zeta_i=0$ and that the inner unit normal vector to $\pp \Omega$ at $\zeta_i$ is the vector ${\rm e}_2=(0,1)$. Hence, there exists $r_1>0$, $\D>0$ small and a function ${\rm p}: (-\D,\D)\to \R$ satisfying 
$$
{\rm p}\in C^{\infty}(-\D,\D), \quad {\rm p}(0)=0, \quad {\rm p}'(0)=0
$$ 
and such that   
$$
\Omega \cap B_{r_1}(\zeta) =\{(x_1,x_2)\,:\, -\D<x_1< \D, \quad {\rm p}(x_1)<x_2\}\cap B_{r_1}(0,0).
$$

Consider the flattening change of variables $F_i:\Omega \cap B_{r_1}(\zeta)\to \R^2$ defined by
$$
F_i:=(F_{i1},F_{i2}), \quad \quad \hbox{where}\quad F_{i1}:=x_1 + \frac{x_2 - {\rm p(x_1)}}{1 + |{\rm p'}(x_1)|^2}{\rm p}'(x_1) \quad \quad F_{i2}:= x_2 - G(x_1).
$$

At this point we remark that the radius $r_{1}$ must satisfy that $r_{1}\in (0,c_{\ep})$, where $c_{\ep}$ is defined in \eqref{distancezetaizetaj}.

\medskip
Throughout our discussions we will assume that $r_{1}\in (\frac{1}{2}c_{\ep},c_{\ep})$, so that $\frac{r_1}{\ep}\to \infty$ as $\ep\to 0$.

\medskip
Recalling that $\zeta_i'=\frac{\zeta_i}{\ep}$, we set for $y\in \pp \Omega_{\ep} \cap B_{\frac{r_1}{\ep}}(\zeta_i')$, 
$$
F^{\ep}_i(y):= \frac{1}{\ep}F_i(\ep y)
$$ 
and define
$$
\chi_{i}(y):= \chi(|F_i^{\ep}(y)|), \quad \quad Z_{ij}(y):=\chi_i(y)z_{ij}(F_i^{\ep}(y)).
$$

The following proposition accounts for the solvability of the linear problem \eqref{lineareqanphi1}-\eqref{boundaryorthogonalityconditionphi1}. For this we define the following norm
\begin{equation}\label{normh}
\|h\|_{*}:=\sup \limits_{y \in \Omega_{\ep}} \frac{|h(y)|}{\ep^2 \,+\,\sum_{i=1}^m \left(1 + |y - \zeta_i'|\right)^{-2-\sigma}}.
\end{equation}

\medskip

\begin{prop}\label{solvabilitylinearproblem}
Assume the conditions of Lemma \ref{sizeof the errorSvep} on the parameters $d_i$ and $\zeta_i$. For any $\ep>0$ small enough and any given $h\in L^{\infty}(\Omega_{\ep})$ there exists $\phi\in C^1(\overline{\Omega_{\ep}})$ and constants $c_{ij}\in \R$ where $\phi$ is the unique solution of \eqref{lineareqanphi1}. Moreover, there exists a constant $C>0$ independent of $\ep>0$ such that
$$
\|\nabla \phi\|_{L^{\infty}(\Omega_{\ep})}\,+\,\|\phi\|_{L^{\infty}(\Omega_{\ep})}\leq C\,\ln\left(\frac{1}{\ep}\right)\|h\|_{*}.
$$
\end{prop}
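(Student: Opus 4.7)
The plan is to follow the standard Lyapunov--Schmidt scheme for this class of problems (as in \cite{DELPINOWEI}), where the heart of the matter is the a priori estimate; existence then follows from Fredholm-type considerations in an orthogonal complement.

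\textbf{Step 1: A priori estimate via contradiction and blow-up.} I would argue by contradiction: suppose there exist sequences $\ep_n \to 0$, $h_n$ with $\|h_n\|_* \to 0$, and solutions $\phi_n$ with $\|\phi_n\|_{L^\infty(\Omega_{\ep_n})} = 1$, together with constants $c_{ij}^n$, satisfying the linear equation. The first task is to eliminate the constants $c_{ij}^n$. Testing equation \eqref{lineareqanphi1} against $Z_{kl}$ (or a suitable correction that satisfies the homogeneous Neumann condition), one uses orthogonality of the $z_{ij}$'s for the limit equation $\Delta z + e^V z = 0$ together with the decay estimate of $h_n$ in the $\|\cdot\|_*$ norm to deduce $c_{ij}^n \to 0$ fast enough (with an explicit rate controlled by $\|h_n\|_*$).

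\textbf{Step 2: Inner analysis around each $\zeta_i'$.} Around each concentration point $\zeta_i'$, I would rescale and consider $\tilde{\phi}_n(y) := \phi_n(y + \zeta_i')$ (or its composition with the flattening $F_i^{\ep_n}$ for boundary points). Using standard elliptic estimates and the bound $\|\phi_n\|_\infty \le 1$, one extracts a subsequence converging locally uniformly to a bounded solution $\phi_\infty$ of
\begin{equation*}
\Delta \phi_\infty + \frac{8 d_i^2}{(d_i^2 + |y|^2)^2}\phi_\infty = 0 \quad \text{in } \R^2 \text{ (resp.\ a half-plane with Neumann condition)},
\end{equation*}
which by the classification result quoted from \cite{ESPOSITOGROSSIPISTOIA} must be a linear combination of $z_{i0}, z_{i1}, z_{i2}$ (resp.\ $z_{i0}, z_{i1}$ in the boundary case). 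The orthogonality conditions in \eqref{boundaryorthogonalityconditionphi1} pass to the limit and kill the components along $z_{ij}$, $j=1,J_i$; the $z_{i0}$ coefficient is handled because $\int_{\R^2} e^V z_{i0}^2 > 0$ together with the constraint coming from integrating the equation against $Z_{i0}$-type corrections. So $\phi_\infty \equiv 0$, i.e.\ $\phi_n \to 0$ uniformly on bounded neighborhoods of each $\zeta_i'$.

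\textbf{Step 3: Outer estimate and the logarithmic loss.} Outside $\cup_i B_{R}(\zeta_i')$ (for $R$ large but fixed), the potential $e^{v_\ep}$ is small by Lemma \ref{sizeof the errorSvep}, and the equation essentially becomes $\Delta \phi - \ep^2 \phi = \tilde h_n$ with small boundary data on the inner boundaries. The key obstacle, and where the factor $\ln(1/\ep)$ enters, is that the operator $\Delta - \ep^2 I$ with Neumann conditions on the large domain $\Omega_\ep$ is nearly singular (constants are almost in the kernel). The natural barrier is a comparison function of the form
\begin{equation*}
\psi(y) := C\|h_n\|_* \Bigl(\ln(1/\ep) + \sum_i \omega_i(y)\Bigr),
\end{equation*}
where $\omega_i$ behaves like $(1 + |y - \zeta_i'|)^{-\sigma}$; applying the maximum principle and matching with the inner values from Step 2 produces the bound $\|\phi_n\|_\infty \le C \ln(1/\ep) \|h_n\|_* + o(1)$, contradicting $\|\phi_n\|_\infty = 1$. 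The gradient bound is then a standard consequence of interior and boundary elliptic Schauder estimates applied on balls of fixed radius. The main obstacle is precisely the accounting in this outer step: getting the sharp $\ln(1/\ep)$ and not something worse requires a careful barrier that uses the $\ep^2$ zero-order term together with the size $|\Omega_\ep| \sim \ep^{-2}$.

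\textbf{Step 4: Existence via Fredholm alternative.} Having the a priori estimate, existence is standard. I would reformulate the problem variationally: set $H := \{\phi \in H^1(\Omega_\ep) : \int \chi_{ij}Z_{ij}\phi = 0,\ \forall i,j\}$, endowed with the inner product $\int(\nabla \phi \cdot \nabla \psi + \ep^2 \phi \psi)$. The equation \eqref{lineareqanphi1}--\eqref{boundaryorthogonalityconditionphi1} is equivalent, by Riesz representation, to an identity of the form $\phi = T(\phi) + \tilde h$ in $H$, where $T$ is a compact operator (as the embedding $H^1 \hookrightarrow L^2$ is compact on the bounded domain $\Omega_\ep$ for each fixed $\ep$) encoding the $e^{v_\ep}\phi$ term. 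The a priori estimate from Steps 1--3 shows $\mathrm{Id} - T$ is injective, so by Fredholm it is an isomorphism, yielding existence and uniqueness. The quantitative bound on $\phi$ and $\nabla \phi$ then comes from the a priori estimate together with standard $W^{2,p}$ elliptic regularity.
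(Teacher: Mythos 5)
Your overall strategy coincides with the paper's, which gives essentially no details and simply invokes Proposition 3.1 of \cite{DELPINOWEI}: a priori estimate by contradiction and blow-up, classification of bounded kernel elements from \cite{ESPOSITOGROSSIPISTOIA}, barriers in the outer region, and a Fredholm argument on the orthogonal complement. There is, however, one genuine gap in your accounting, and it concerns the crux of the estimate.

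The gap is the origin of the factor $\ln(1/\ep)$. Your Step 3 attributes it to the outer problem for $\Delta-\ep^2$ on the large domain $\Omega_\ep$, via a barrier of size $\ln(1/\ep)\|h\|_*$. That is not where the loss occurs: a barrier of the form $C\|h\|_*\bigl(1+\sum_i\omega_i\bigr)$ with $\omega_i\sim(1+|y-\zeta_i'|)^{-\sigma}$ already controls the outer region at cost $O(\|h\|_*)$, since the right-hand side there is only of size $\ep^2\|h\|_*$ and $(\Delta-\ep^2)(-\|h\|_*)=\ep^2\|h\|_*$. The logarithm is generated by the dilation direction $z_{i0}$, which is \emph{not} among the orthogonality conditions in \eqref{boundaryorthogonalityconditionphi1} (the sums there run over $j=1,J_i$ only). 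Because $z_{i0}(y)\to 1/d_i\neq 0$ as $|y|\to\infty$, it cannot be localized by a cut-off without a large error; the standard remedy is to decompose $\phi=\tilde\phi+\sum_i e_i\hat Z_{i0}$, where $\hat Z_{i0}$ corrects $z_{i0}$ by a radial function behaving like $\ln|y-\zeta_i'|$ between the scales $O(1)$ and $O(1/\ep)$, to apply the no-loss estimate to $\tilde\phi$, and then to obtain $|e_i|\le C\ln(1/\ep)\|h\|_*$ by testing against $\hat Z_{i0}$, the relevant pairing being of order $(\ln(1/\ep))^{-1}$. Your Step 2 sentence about handling the $z_{i0}$ coefficient "because $\int e^V z_{i0}^2>0$ together with the constraint coming from integrating against $Z_{i0}$-type corrections" glosses over exactly this point: as written, the blow-up limit $\phi_\infty$ could still be a nonzero multiple of $z_{i0}$, and quantifying its size is the entire content of the logarithmic loss. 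A secondary point: in this paper the boundary concentration points require the flattening maps $F_i^\ep$ and the choice $r_1\in(\tfrac12 c_\ep,c_\ep)$ with $c_\ep$ possibly shrinking like $|\ln\ep|^{-\kappa}$; this is the "slight change" relative to \cite{DELPINOWEI} that the paper alludes to, and your blow-up and barrier steps should be checked to survive it (they do, since $r_1/\ep\to\infty$).
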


With our choice of the radius $r_{1}$, the proof of Proposition \ref{solvabilitylinearproblem} follows the same lines of the proof of Proposition 3.1 in \cite{DELPINOWEI} with only slight changes. We leave details to the reader. 

\medskip

Denote by $\zzeta:= (\zeta_1,\ldots,\zeta_m)$ and $T(h)=\phi$ the linear operator given by Proposition \ref{solvabilitylinearproblem}. Clearly $T(h)$ depends on ${\zzeta}$, so we write $$T(h)=T(h)({\zzeta}).
$$ 

Proceeding exactly as in section 3 in \cite{DELPINOKOWALCZYKMUSSO}, we obtain that the mapping $   {\zzeta}\to T(h)(   {\zzeta})$ is differentiable in $   {\zzeta}$ and 
$$
\|D_{   {\zzeta}}T(h)\|\leq C\ln\left(\frac{1}{\ep}\right)^{2}\|h\|_{*}.
$$

\medskip
As a direct application of Proposition \ref{solvabilitylinearproblem} and a fixed point argument, we solve the nonlinear problem

\begin{equation}\label{nonlinearlineareqanphi1}
\Delta \phi -\ep^2 \phi =  -S(v_{\ep})-e^{v_{\ep}}\phi -\ep \gamma(\ep y) \cdot \nabla\phi- N(\phi) +
\sum_{i=1}^m \sum_{j=1,J_i}c_{ij}\chi_{ij} Z_{ij} \quad \hbox{in }\quad \Omega_{\ep}
\end{equation}
with the boundary and orthogonality 
conditions in \eqref{boundaryorthogonalityconditionphi1}.

\begin{prop}\label{solvabilitynonlinearprojectedproblem}
Under assumptions in Propososition \ref{solvabilitylinearproblem}, given any $\A>0$ for every $\ep>0$ small there exists a solution $\phi$ and constants $c_{ij}\in \R^N$ satisfying \eqref{nonlinearlineareqanphi1}-\eqref{boundaryorthogonalityconditionphi1} and such that 
$$
\|\nabla \phi\|_{L^{\infty}(\Omega_{\ep})}\,+\,\|\phi\|_{L^{\infty}(\Omega_{\ep})}\leq C\,\ep^{\A}\ln\left(\frac{1}{\ep}\right).
$$

Even more, $\phi=\Phi({   {\zzeta}})$ is differentiable respect to ${   {\zzeta}}$  and 
$$
\|D_{{   {\zzeta}}}\Phi\|\leq C\ep^{\A}\ln\left(\frac{1}{\ep}\right)^{2}.
$$
\end{prop}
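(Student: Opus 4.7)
The plan is to recast the projected nonlinear problem \eqref{nonlinearlineareqanphi1}--\eqref{boundaryorthogonalityconditionphi1} as a fixed-point equation and apply a contraction argument based on Proposition \ref{solvabilitylinearproblem}. Let $T$ be the linear operator from Proposition \ref{solvabilitylinearproblem}, and set
$$
M(\phi):= -S(v_{\ep}) - \ep\,\gamma(\ep y)\cdot \nabla \phi - N(\phi),
$$
so that solving \eqref{nonlinearlineareqanphi1}--\eqref{boundaryorthogonalityconditionphi1} is equivalent to finding a fixed point of $\mathcal{A}(\phi):= T(M(\phi))$ in the space $X:=C^{1}(\overline{\Omega_{\ep}})$ with norm $\|\phi\|_{X}:=\|\phi\|_{L^{\infty}(\Omega_{\ep})}+\|\nabla \phi\|_{L^{\infty}(\Omega_{\ep})}$.

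The next step is to estimate each piece of $M(\phi)$ in the $*$-norm defined in \eqref{normh}. Choosing $\sigma\in(0,1)$, the pointwise bound \eqref{sizeSvep} in Lemma \ref{sizeof the errorSvep} gives $\|S(v_{\ep})\|_{*}\leq C\ep^{\A}$ directly, since $(1+|y-\zeta_i'|)^{-3}\leq (1+|y-\zeta_i'|)^{-2-\sigma}$. For the quadratic part, the elementary inequality $|e^{\phi}-1-\phi|\leq C|\phi|^{2}$ together with \eqref{sizeW} yields $|N(\phi)|\leq C\|\phi\|_{L^{\infty}}^{2}\sum_{i}(1+|y-\zeta_i'|)^{-4}$, and hence $\|N(\phi)\|_{*}\leq C\|\phi\|_{L^{\infty}}^{2}$. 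The gradient term is controlled by the boundedness of $\gamma$ and by the decay of $\nabla \phi$ away from the concentration points (inherited, at each iteration, from $T$), which allows one to absorb the factor $\ep$ into the denominator of $\|\cdot\|_{*}$ and obtain $\|\ep\gamma(\ep y)\cdot \nabla \phi\|_{*}\leq C\ep\,\|\nabla \phi\|_{L^{\infty}(\Omega_{\ep})}$.

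With these estimates in hand, Proposition \ref{solvabilitylinearproblem} gives
$$
\|\mathcal{A}(\phi)\|_{X}\leq C\ln\!\left(\tfrac{1}{\ep}\right)\left(\ep^{\A}+\ep\|\phi\|_{X}+\|\phi\|_{X}^{2}\right),
$$
so that for $K>0$ large and $\ep>0$ small, $\mathcal{A}$ maps the ball $B_{\rho}:=\{\phi\in X:\|\phi\|_{X}\leq \rho\}$ with $\rho:= K\ep^{\A}\ln(1/\ep)$ into itself. A completely analogous Lipschitz estimate $\|N(\phi_{1})-N(\phi_{2})\|_{*}\leq C\rho\,\|\phi_{1}-\phi_{2}\|_{L^{\infty}}$ together with the linearity of the remaining terms then shows that $\mathcal{A}$ is a contraction on $B_{\rho}$, and the Banach fixed point theorem produces the desired $\phi=\Phi(\zzeta)\in B_{\rho}$ satisfying the stated bound. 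For differentiability in $\zzeta$, I would apply the implicit function theorem to the map $(\phi,\zzeta)\mapsto \phi-T(M(\phi))(\zzeta)$: differentiability of $T$ in $\zzeta$ (noted in the remark preceding the proposition, with bound $C\ln(1/\ep)^{2}\|h\|_{*}$) and smooth dependence of $S(v_{\ep})$, $v_{\ep}$ and the cut-off functions $\chi_{ij},Z_{ij}$ on $\zzeta$ yield the claimed estimate $\|D_{\zzeta}\Phi\|\leq C\ep^{\A}\ln(1/\ep)^{2}$.

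The main obstacle is the control of the gradient term $\ep\,\gamma(\ep y)\cdot \nabla \phi$ in the $*$-norm: the factor $\ep^{2}$ in the denominator of $\|\cdot\|_{*}$ competes with the single factor of $\ep$ in front of $\nabla \phi$, so a naive $L^{\infty}$ estimate on $\nabla\phi$ is not enough. This is handled by exploiting that $\nabla\phi$, being the gradient of a solution to the linear projected problem, inherits from the right-hand side $h$ the same decay structure encoded by $\|\cdot\|_{*}$ away from the concentration points $\zeta_{i}'$. Once this point is absorbed, the scheme is a verbatim adaptation of \cite{DELPINOWEI} and \cite{DELPINOKOWALCZYKMUSSO}.
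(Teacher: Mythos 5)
Your proposal coincides with the paper's own proof, which is precisely a Banach fixed point argument modeled on Lemma 4.1 of \cite{DELPINOWEI}, built on Proposition \ref{solvabilitylinearproblem} and on controlling the drift term $\ep\,\gamma(\ep y)\cdot\nabla\phi$ in the $\|\cdot\|_{*}$ norm through the decay of $\nabla\phi$ (the paper records this as the inequality $\|\nabla \phi\|_*\leq C \|\nabla \phi\|_{L^{\infty}(\Omega_{\ep})}$, which is exactly the weighted-decay property you invoke). Your estimates for $S(v_{\ep})$, $N(\phi)$ and the implicit-function-theorem treatment of the dependence on $\zzeta$ are the standard ingredients the paper delegates to \cite{DELPINOWEI} and \cite{DELPINOKOWALCZYKMUSSO}.
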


\begin{proof}
We proceed using a fixed point argument, as in the proof of Lemma 4.1 in \cite{DELPINOWEI} using the fact that
$$
\|\nabla \phi\|_*\leq C \|\nabla \phi\|_{L^{\infty}(\Omega_{\ep})}.
$$
\end{proof}

\section{The energy estimate}\label{energy estimates}

  Our next step, consists in choosing the points $\zeta_i\in \overline{\Omega}$ so that the real numbers, (actually functions of $   {\zzeta}:=(\zeta_1,\ldots,\zeta_m)$\,) $c_{ij}$, in equation \eqref{nonlinearlineareqanphi1}, are identically zero. Thus leading to the solution of \eqref{nonlineareqnphi1}-\eqref{boundaryconditionphi1}.

\medskip  
Notice that the dimension of the approximate kernel of the linear problem \eqref{lineareqanphi1}-\eqref{boundaryorthogonalityconditionphi1} is $3k + 2l$. We get rid of $m=k+l$ elements of this approximate kernel by choosing parameters $d_1,\ldots,d_m$ satisfying \eqref{logconditiondi}. On the other hand, associated to the points $\zeta_i \in \Omega$, we must get rid off the constants $c_{i1}$ and $c_{i2}$, while for points $\zeta_i \in \pp \Omega$, we must get rid of only the constants $c_{i1}$.
 \\

Throughout this part, we take $\zeta_1, \ldots,\zeta_m \in \overline{\Omega}$ satisfying conditions \eqref{assumptionszetai} and \eqref{nottoclosezetaij}. In addition, we assume that for some $\kappa \geq 1$
\begin{equation}\label{closebutnotmuch}
c|\ln(\ep)|^{-\kappa}\leq |\zeta_i - \zeta_j| \leq C  , \quad \forall\, i,j =1,\ldots,m, \quad i\neq j.
\end{equation}
Since 
$$
c_{\ep}:=\min\{|\zeta_i -\zeta_j|\,:\, i,j=1,\ldots,m, \quad i\neq j\}
$$
we notice that $c_{\ep}\geq c|\ln(\ep)|^{-\kappa}$.

Since the approximation in \eqref{approx} depends on the points $\zeta_i$, we write
$$
u_{\ep}(   {\zzeta}):=u_{\ep}=\sum_{i=1}^m U_i + H_i, \quad \quad \hbox{in} \quad  \overline{\Omega},
$$
where 
\begin{equation*}U_{i}(x):= \ln
\left(\frac{8d_i^2}{\left(\ep^2d_i^2 + |x-\zeta_i|^2\right)^2}\right),\quad  \quad \hbox{for } x \in \R^2 
\end{equation*}
and the
$d_i$'s are real numbers satisfying
\begin{equation}\label{logconditiondi1}
\ln\left(8 d_i^2\right)= c_iH(\zeta_i,\zeta_i) \,+ \, \sum_{j\neq i} c_jG(\zeta_i,\zeta_j), \quad \quad \forall\,i=1,\ldots,m.
\end{equation}
Denote also 
$
\phi=\phi(   {\zzeta})$ and $c_{ij}(   {\zzeta})
$ 
the unique solution of \eqref{nonlinearlineareqanphi1}-\eqref{boundaryconditionphi1} and finally set $\tilde{\phi}(   {\zzeta}):= \phi(   {\zzeta})(\frac{x}{\ep})$.

\medskip

For any $u\in H^1(\Omega)$, we consider the energy
\begin{equation}\label{energy}
J(u):=\frac{1}{2}\int_{\Omega}a(x)\left(|\nabla u|^2 + u^2\right)dx \,-\,\ep^2\int_{\Omega}a(x)e^u
\end{equation} 
which is well defined due to the Moser-Trudinger inequality. It is well known that critical points of $J$ are nothing but solutions   of equation \eqref{emdenfowlerhighdim}.
Let us introduce the so-called {\emph {reduced energy}}
\begin{equation}\label{red-energy}
\mathcal{F}_{\ep}(   {\zzeta}):= J\left(u_{\ep}(   {\zzeta}) + \tilde{\phi}(   {\zzeta})\right).
\end{equation}

The aim of the next result is to understand the role played by $\mathcal{F}_{\ep}$ and its asymptotic behavior as $\ep\to0.$

\begin{prop}\label{variational reduction}
\begin{itemize}
\item[(i)]
If $   {\zzeta}=(\zeta_1,\ldots,\zeta_m),$ with the $\zeta_i$'s satisfying conditions  \eqref{assumptionszetai}, \eqref{nottoclosezetaij} and \eqref{closebutnotmuch}, is a critical point of the functional $\mathcal{F}_{\ep}$ then $u=u_{\ep}(   {\zzeta}) + \tilde{\phi}(   {\zzeta})$ is a critical point of $J$, i.e. a  solution of equation \eqref{emdenfowlerhighdim}.

\item[(ii)]
There holds true that
$$
\mathcal{F}_{\ep}(   {\zzeta})=\mathcal F_0(\zzeta) + \tilde{\theta}_{\ep}(   {\zzeta})
$$
$C^1-$uniformly in $\zzeta=(\zeta_1,\ldots,\zeta_m)$.
Here (given $c_0:=-4\pi(2-\ln8)$)
\begin{equation}\label{interior}\begin{aligned}
\mathcal F_0(\zzeta):= &\sum_{i=1}^m a(\zeta_i) \left(c_0+8\pi|\ln \ep|\right)  
-4\pi \sum_{i=1}^m
a(\zeta_i)\left[H(\zeta_i,\zeta_i) +\sum_{j\neq i}G(\zeta_i,\zeta_j)\right]\\
&\hbox{if $\zeta_1,\dots,\zeta_m\in\Omega$}\\
\end{aligned}\end{equation}
and
\begin{equation}\label{boundary}\begin{aligned}
\mathcal F_0(\zzeta):= &\sum_{i=1}^m a(\zeta_i) \left(c_0+8\pi|\ln \ep|\right)  
-2\pi \sum_{i=1}^m
a(\zeta_i)\left[H(\zeta_i,\zeta_i) +\sum_{j\neq i}G(\zeta_i,\zeta_j)\right]\\
&\hbox{if $\zeta_1,\dots,\zeta_m\in\pp\Omega$}\\
\end{aligned}\end{equation}
Moreover $\tilde{\theta}_{\ep}$ is a $C^1-$ function such that  
$
|\tilde{\theta}_{\ep}| + |\nabla _{   {\zzeta}} \tilde{\theta}_{\ep}| 
\to 0
$ as $\ep \to 0$,
\end{itemize}
\end{prop}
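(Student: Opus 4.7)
Part (i) follows by the standard Lyapunov--Schmidt argument. From Proposition \ref{solvabilitynonlinearprojectedproblem}, the function $u=u_\ep(\zzeta)+\tilde\phi(\zzeta)$ solves the full equation \eqref{emdenfowlerhighdim} if and only if every multiplier $c_{ij}(\zzeta)$ vanishes. Differentiating $\mathcal F_\ep(\zzeta)$ in $\zeta_k$ and using the chain rule together with the projected equation yields an expression of the form
\begin{equation*}
\partial_{\zeta_k}\mathcal F_\ep(\zzeta)=\sum_{i,j}\Lambda_{kij}(\zzeta,\ep)\,c_{ij}(\zzeta),
\end{equation*}
whose coefficient matrix $[\Lambda_{kij}]$, after suitable rescaling, converges as $\ep\to 0$ to an invertible matrix: the leading term of $\partial_{\zeta_k}u_\ep$ lies in the span of the kernel functions $Z_{kj}$, and $\|D_\zzeta\tilde\phi\|=o(1)$ by Proposition \ref{solvabilitynonlinearprojectedproblem}. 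Consequently $\nabla_\zzeta\mathcal F_\ep=0$ forces $c_{ij}(\zzeta)\equiv 0$, and $u$ solves \eqref{emdenfowlerhighdim}.

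For (ii), the first step is to replace $\mathcal F_\ep(\zzeta)=J(u_\ep+\tilde\phi)$ by $J(u_\ep(\zzeta))$. A Taylor expansion combined with the bound $\|\tilde\phi\|_{L^\infty}+\|\nabla\tilde\phi\|_{L^\infty}=O(\ep^\alpha|\ln\ep|)$ from Proposition \ref{solvabilitynonlinearprojectedproblem} and the error estimate on $S(v_\ep)$ from Lemma \ref{sizeof the errorSvep} yields
\begin{equation*}
\mathcal F_\ep(\zzeta)=J(u_\ep(\zzeta))+o(1),
\end{equation*}
uniformly in $\zzeta$ in the $C^1$ topology.

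The main computation is the asymptotic expansion of $J(u_\ep(\zzeta))$. Using \eqref{EquationPU} and integrating by parts one finds $\int_\Omega a(|\nabla u_\ep|^2+u_\ep^2)=\ep^2\sum_{i,j}\int_\Omega a\,e^{U_i}PU_j$, so
\begin{equation*}
J(u_\ep)=\frac{\ep^2}{2}\sum_{i,j}\int_\Omega a\,e^{U_i}PU_j-\ep^2\int_\Omega a\,e^{u_\ep}.
\end{equation*}
Each integral is then expanded by rescaling $x=\zeta_i+\ep d_i\,y$ and using the Liouville integrals $\int_{\R^2}\frac{8\,dy}{(1+|y|^2)^2}=\int_{\R^2}\frac{8\ln(1+|y|^2)}{(1+|y|^2)^2}dy=8\pi$. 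The diagonal terms $i=j$, together with \eqref{AsymptResiduePU}, produce the quantity $a(\zeta_i)(c_0+8\pi|\ln\ep|)-4\pi a(\zeta_i)H(\zeta_i,\zeta_i)$, while the off-diagonal ones concentrate near $\zeta_i$ and contribute $-4\pi a(\zeta_i)G(\zeta_i,\zeta_j)$, using the continuity of $G(\cdot,\zeta_j)$ off the diagonal. The calibration \eqref{logconditiondi} of the $d_i$'s is exactly what cancels the stray $\ln(8d_i^2)$ factors produced along the way. For boundary concentration points the bubble only sees a half-space at leading order, yielding the factor $1/2$ that turns $4\pi$ into $2\pi$ in \eqref{boundary}.

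The main obstacle I expect is upgrading these expansions from pointwise in $\zzeta$ to the $C^1$ topology. To differentiate the various error terms in $\zzeta$ while retaining smallness, I will need the $\zzeta$-regularity of $H(\cdot,\zeta)$ and of the Robin function $\zeta\mapsto H(\zeta,\zeta)$ provided by Theorem \ref{RegularPartInner}, Proposition \ref{asymptoticsof H} and Corollary \ref{corolario}, together with the derivative bound $\|D_\zzeta\Phi\|=O(\ep^\alpha|\ln\ep|^2)$ from Proposition \ref{solvabilitynonlinearprojectedproblem}. For boundary points $\zeta_i\in\partial\Omega$ the admissible variations are tangential, and the tangential $C^1$-regularity part of Theorem \ref{RegularPartInner}, combined with the boundary trace estimates of the appendix Lemma \ref{boundaryterm}, is what makes the argument go through.
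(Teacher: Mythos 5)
Your proposal is correct and follows essentially the same route as the paper: part (i) via the standard variational reduction argument (which the paper itself delegates to Lemmas 5.1--5.2 of del Pino--Wei), and part (ii) by reducing $\mathcal F_\ep$ to $J(u_\ep(\zzeta))$, splitting the energy into the self-interaction, cross-interaction and exponential terms, rescaling $x=\zeta_i+\ep d_i y$, invoking the Liouville integrals and Lemma \ref{expansionHdzeta}, and letting the calibration \eqref{logconditiondi} of the $d_i$'s produce the final coefficients. The only cosmetic difference is bookkeeping: in the paper the term $-4\pi a(\zeta_i)\left[H+\sum G\right]$ emerges only after substituting \eqref{logconditiondi} into the $\ln(\ep^2 d_i^2)$ contribution of the diagonal part (the raw diagonal and off-diagonal integrals carry $+4\pi$), which you acknowledge implicitly by noting that the calibration absorbs the $\ln(8d_i^2)$ factors.
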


\begin{proof}
  First of all, we prove $(i)$ and the fact that 
 there exists a $C^1$ differentiable function $\tilde{\theta}_{\ep}$ such that 
$$
\mathcal{F}_{\ep}(   {\zzeta})=J(u_{\ep}(   {\zzeta})) + \tilde{\theta}_{\ep}(   {\zzeta})
$$
and as $\ep \to 0$,
$$
|\tilde{\theta}_{\ep}| + |\nabla _{   {\zzeta}} \tilde{\theta}_{\ep}| 
\to 0
$$
uniformly in $\zzeta=(\zeta_1,\ldots,\zeta_m)$.
 The proof follows the same lines of Lemma 5.1  and Lemma 5.2 in \cite{DELPINOWEI}, with no changes. We leave details to the reader. We also refer the reader to \cite{WEIYEZHOU1,WEIYEZHOU2} for similar computations. 
 \\\\
  Then we analyze the asymptotic behavior of the energy $J(u_{\ep}(\zzeta))$. 
 We  only consider the case when the points belong to $\Omega$, because the proof   when the points lie on the boundary $\pp\Omega$ relies on similar arguments.  {Moreover, we only compute the $C^0-$expansion because the $C^1-$estimate can be obtained in a similar way.}

First of all,   since for every $i=1,\ldots,m$
$$
\rdiv(a(x)\nabla PU_i)-a(x)PU_i+\ep^2a(x)e^{U_i} =0\quad \quad \hbox{in} \quad \Omega
$$
$$
\frac{\pp PU_i}{\pp \nu}=0\quad \quad \hbox{on} \quad \pp \Omega
$$
we know that
for any $j,i=1,\ldots,m$, 
$$
\int_{\Omega}a(x)\left(\nabla PU_i\cdot \nabla PU_j + PU_i\cdot PU_j\right)dx = \ep^2\int_{\Omega}a(x)e^{U_i}PU_jdx.
$$

Hence, we compute
\begin{eqnarray*}
J(u_{\ep}(\zzeta))&=&\frac{1}{2}\int_{\Omega}a(x)\left(|\sum_{i=1}^m \nabla PU_i|^2 + |\sum_{i=1}^m PU_i|^2\right)- \ep^2\int_{\Omega}a(x)e^{\sum_{i=1}^m PU_i}\\
&=&\frac{1}{2}\sum_{i=1}^m \int_{\Omega}a(x)\left(\left|\nabla PU_i\right|^2 + \left|PU_i\right|^2\right)\\
&&+ \frac{1}{2}\sum_{i,j=1 i\neq j}^m \int_{\Omega}a(x)\left(\nabla PU_i\cdot \nabla PU_j + PU_i\cdot PU_j\right)-\ep^2\int_{\Omega}a(x)e^{\sum_{i=1}^m PU_i}
\\
&=&\underbrace{\sum_{i=1}^m\frac{\ep^2}{2}\int_{\Omega}a(x)e^{U_i}PU_i dx}_{I} + \underbrace{\sum_{i,j \,\,i\neq j}\frac{\ep^2}{2}\int_{\Omega}a(x)e^{U_i}PU_jdx}_{II} -\underbrace{\ep^2\int_{\Omega}a(x)e^{\sum_{i=1}^m PU_i}dx}_{III}.
\end{eqnarray*}

\medskip

We first compute $I$. Fix $i=1,\ldots,m$ so that 
$$
\ep^2\int_{\Omega}a(x)e^{U_i}PU_idx=\int_{\Omega}a(x)\frac{8\ep^2d_i^2}{(\ep^2 d_i^2 + |x-\zeta_i|^2)^2}\left(U_i(x) + H_i(x)\right)dx.
$$

From Lemma \ref{expansionHdzeta}, for any $\A\in (0,1)$
$$
U_i(x)+H_i(x)=\ln\left(\frac{1}{(\ep^2 d_i^2 + |x-\zeta_i|^2)^2}\right) + H(x,\zeta_i) + \OO(\ep^{\A}), \quad \quad x\in\Omega.
$$

Using the change of variables $x=\zeta_i +\ep d_i y$, we obtain 
$$
\ep^2\int_{\Omega}a(x)e^{U_i}PU_idx
$$

$$
=\int_{\Omega_{\ep d_i} -\frac{\zeta_i}{\ep d_i}}a(\zeta_i + \ep d_i y)\frac{8}{(1+|y|^2)^2}\left(-\ln(\ep^4d_i^4)+\ln\left(\frac{1}{(1+|y|^2)^2}\right)+ H(\zeta_i + \ep d_i y,\zeta_i) + \OO(\ep^{\A})\right)dy
$$

$$
= \int_{\Omega_{\ep d_i} -\frac{\zeta_i}{\ep d_i}}a(\zeta_i)\frac{8}{(1+|y|^2)^2}\left(-\ln(\ep^4d_i^4)\,+\,\ln\left(\frac{1}{(1+|y|^2)^2}\right)+ H(\zeta_i,\zeta_i) + \OO(\ep^{\A}d_i^{\A}|y|^{\A})\right)dy
$$

\begin{equation}
=\int_{\R^{2}}a(\zeta_i)\frac{8}{(1+|y|^2)^2}\left(-\ln(\ep^4d_i^4)\,+\,\ln\left(\frac{1}{(1+|y|^2)^2}\right)+ H(\zeta_i,\zeta_i) \right)dy + \OO(\ep^{\A}d_i^{\A}).
\end{equation}

Since 
$$
\int_{\R^2}\frac{8}{(1+|y|^2)^2}dy=8\pi \quad  \quad \hbox{and} \quad \quad \int_{\R^2}\frac{8}{(1+|y|^2)^2}\ln\left(\frac{1}{(1+|y|^2)^2}\right)dy=-16\pi,
$$
we find that
\begin{equation}\label{estimateI}
I=-8\pi\sum_{i=1}^m a(\zeta_i)\left(\ln(\ep^2 d_i^2)+1\right) + 4\pi\sum_{i=1}^ma(\zeta_i)H(\zeta_i,\zeta_i) + \OO(\ep^{\A}).
\end{equation}
Here we also used the fact that the parameters $d_i$'s satisfy
$ 
\frac{1}{C}\leq d_i \leq |\ln\left({\ep}\right)|^C
$ 
and 
$ 
\lim_{\ep\to 0^+} \ep d_i =0
$ 
(as it follows directly from \eqref{regularpartGreen} and Proposition \eqref{asymptoticsof H} and  expression and \eqref{logconditiondi}).

Next, we compute $II$. First we notice for $i\neq j$ that 
$$
\ep^2\int_{\Omega}a(x)e^{U_i}PU_jdx
$$
\begin{equation*}
=\int_{\Omega}a(x)\frac{8 \ep^2d_i^2}{(\ep^2d_i^2+|x-\zeta_i|^2)^2}\left(\ln\left(\frac{1}{(\ep^2d_j^2+|x-\zeta_j|^2)^2}\right)
+ H(x,\zeta_j) + \OO(\ep^{\A}d_j^{\A})\right)dy.
\end{equation*}

Recall that 
$$
c_{\ep}:=\min\{|\zeta_i -\zeta_j|\,:\, i,j=1,\ldots,m, \quad i\neq j\}.
$$ 

On the other hand, for $|x -\zeta_j|\geq c_{\ep}$, we have that 
\begin{eqnarray*}
\ln\left(\frac{1}{(\ep^2 d^2_j + |x-\zeta_j|^2)^2}\right)&=&-4\ln\left(|x-\zeta_j|\right)+ \OO_{L^{\infty}(B_{c_{\ep}}(\zeta_i))}(\ep^2d_j^2 |y-\zeta'_j|^{-2})\\
&=&-4\ln(|\ep y -\zeta_j|) + \OO_{L^{\infty}(B_{\frac{c_{\ep}}{\ep}}(\zeta_i'))}(\ep^2 d_j^2 |y-\zeta'_j|^{-2})\\
&=&-4\ln(|\zeta_i -\zeta_j|) + \OO_{L^{\infty}(B_{\frac{c_{\ep}}{\ep}}(\zeta_i'))}\left(\ep^{\A}\right).
\end{eqnarray*}

Also 
$$
H(x,\zeta_j)=H(\zeta_i,\zeta_j)+ \OO_{L^{\infty}(\Omega)}(|x - \zeta_j|^{\A}), \quad \quad \forall\, x\in \Omega.
$$
so that 
\begin{eqnarray*}
\ep^2\int_{\Omega}a(x)e^{U_i}PU_jdx&=&\int_{\Omega}a(\zeta_i)\frac{8}{(1+|y|^2)^2}\left(-4\ln\left(|\zeta_i-\zeta_j|\right)
+ H(\zeta_i,\zeta_j) + \OO(\ep^{\A})\right)dy\\
&=&8\pi a(\zeta_i)G(\zeta_i,\zeta_j) + \OO(\ep^{\A}).
\end{eqnarray*}

Therefore,
\begin{equation}\label{estimateII}
II=\sum_{i,j,\,\,i\neq j}^m 4\pi \,a(\zeta_i)G(\zeta_i,\zeta_j) + \OO(\ep^{\A}).
\end{equation}

Finally, we compute $III$. To do this we appeal to Lemma \ref{sizeof the errorSvep} to obtain that for fix $i=1,\ldots,m$ and for any $x\in B_{c_{\ep}}(\zeta_i)$ 
\begin{eqnarray}
III&=&\ep^2\int_{\Omega}a(x)e^{\sum_{j=1}^m  U_j(x) +H_j(x)}dx \nonumber\\
&=&\ep^2 \sum_{i=1}^m\int_{\Omega}a(x)e^{\sum_{j=1}^m U_j(x) +H_j(x)}\chi_{B_{c_{\ep}}(\zeta_i)}dx + \ep^2\int_{\Omega}a(x)e^{\sum_{j=1}^m U_j(x) +H_j(x)}\chi_{\Omega-\cup_{i=1}^m B_{c_{\ep}}(\zeta_i)}dx\nonumber\\
&=&8\pi \sum_{i=1}^m a(\zeta_i) + \OO(\ep^{\A}).\label{estimateIII}
\end{eqnarray}

Finally putting together estimates \eqref{estimateI}, \eqref{estimateII} and  \eqref{estimateIII} we obtain that 
$$
J(u_{\ep}(\zzeta)) = -8\pi\sum_{i=1}^m a(\zeta_i)\left(2+\ln(\ep^2 d_i^2)\right) +4\pi \sum_{i=1}^m
a(\zeta_i)\left[H(\zeta_i,\zeta_i) +\sum_{j\neq i}G(\zeta_i,\zeta_j)\right] + \OO(\ep^{\A})
$$
and using condition \eqref{logconditiondi1} the desired estimate  follows.
 \end{proof}

\section{Proofs of Theorems}\label{proofoftheorems}

\begin{proof}[Proof of Theorem \ref{theo2}.] 
Let
 $
 {\bfs}:=( {s}_1,\ldots, {s}_m)\in (\pp \Omega)^m $ and $ \bft:=(t_1,\ldots,t_m)\in (\R_+)^m .$  
Let us consider the   \emph{configuration space} 
$$
\Lambda_{\D}:=\{
(\bfs,\bft)\in (\pp \Omega )^m\times (\R_+)^m\,:\, |s_i-s_j|\ge \delta, \ \delta<t_i<1/\delta\}, 
$$
for some $\delta>0$ small,   independent of $\ep>0.$

For any point in $(\bfs ,\bft )\in \Lambda_{\D}$, we set
\begin{equation}\label{zetai}
\zeta_i:=s _{i} + |\ln(\ep)|^{-1}\,t _i\,\nu( { s_{i}}), \quad \quad \forall i=1,\ldots,m.
\end{equation}

Observe that \eqref{assumptionsdi}, \eqref{assumptionszetai}, \eqref{logconditiondi} and \eqref{nottoclosezetaij} hold true.
By Proposition \ref{variational reduction} we have to find a critical point $\zzeta^\ep:=(\zeta_1^\ep,\dots,\zeta_m^\ep)\in \Omega\times\dots\times \Omega$ of the reduced energy $\mathcal F_\ep $ where each  $\zeta_i^\ep$ is as in \eqref{zetai}. If we use the parametrization of the points $\zeta_i$ given in \eqref{zetai}, we are lead to find a critical point $(\bfs^\ep,\bft^\ep)$ of the reduced energy $\mathcal F_\ep(\bfs,\bft).$

By \eqref{interior} and the property of Robin's function we deduce that $\mathcal F_\ep$ reduces to 
 \begin{equation}\label{cruciale} \mathcal F_\ep(\bfs,\bft)= 8\pi|\ln\ep|\left[\sum_{i=1}^m a(s_i)+\Upsilon_\ep(\bfs)\right] + 16\pi\left[\sum_{i=1}^m a(s_i) \ln t_i+t\partial_\nu a(s_i)\right]+\Theta_\ep(\bfs,\bft),  
\end{equation}
 where the smooth functions $\Upsilon_\ep(\bfs)$ only depends on $\bfs $ while $ \Theta_\ep(\bfs,\bft)$ depends on both $\bfs$ and $\bft$ and they are higher order terms, namely
  $|\Upsilon_\ep|,|\nabla  \Upsilon_\ep|,|\Theta_\ep|,|\nabla  \Theta_\ep|\to0$ as $\ep\to0.$ The proof of this claim is postponed to the end.
 
\medskip 
Once the estimate \eqref{cruciale} is proved, the claim easily follows by degree theory. Indeed, using \eqref{cruciale}, let us introduce the continuous functions
  \begin{align}\label{cru3} 
&  {\nabla_{s_i}\mathcal F_\ep(\bfs,\bft)\over |\ln\ep|}= \underbrace{8\pi \nabla  a(s_i )}_{:=\mathcal S_i(s_i)}+o(1),\ i=1,\dots,m\\
 \label{cru4} &  {\nabla_{t_i}\mathcal F_\ep(\bfs,\bft) }= \underbrace{16\pi \left[{a(s_i )\over t_i}+\partial_\nu a(s_i) \right]}_{:=\mathcal T_i(s_i,t_i)}+o(1),\ i=1,\dots,m .
 \end{align}
Under assumption {\it (A1)}, we know that $a$ has $m$ different strict local minima or local maxima points $\zeta^*_1,\dots,\zeta_m^*\in \pp \Omega.$ Set $s_i^*:=\zeta_i^*.$ Therefore, for some $\rho>0$ small enough, the \emph{Brouwer degree} $ \mathrm {deg}(\nabla a, B(s^*_i,\rho),0)$ is well defined and (see, for example, Corollary 2 in \cite{AMANN})
 \begin{equation}\label{cru5}
  \mathrm {deg}(\mathcal S_i, B(s^*_i,\rho),0)=\pm1\not=0\ \hbox{for any}\ i=1,\dots,m.
 \end{equation}

On the other hand, since $\partial_\nu a(\zeta^*_i)<0$, we can choose $\rho$ small enough so that for any $s_i\in B(s^*_i,\rho)$ there exists a unique
   $t_i=t_i(s_i)=-{a(s_i)\over\partial_\nu a(s_i)}$ such that
    $\mathcal T_i(s_i,t_i)=0.$  
  
   Set $t_i^*:=t_i(s^*_i)$ and let $\rho$ smaller if necessary, so that the Brouwer degree  
   $$\mathrm {deg}(   (\mathcal S_i, \mathcal T_i),
    B(s^*_i,\rho)\times B(t_i^*,\rho) ,0) 
    $$ 
    is well defined. Since $\partial_{t_i}\mathcal T_i(s_i,t_i(s_i))<0,$ by \eqref{cru5} (see, for example, Lemma 5.7 in \cite{mopi}) we immediately get that  
    \begin{equation}\label{cru6}
  \mathrm {deg}(   (\mathcal S_i, \mathcal T_i),
    B(s_i^*,\rho)\times B(t_i^*,\rho) ,0)\not=0\ \hbox{for any}\ i=1,\dots,m.
 \end{equation}
 Finally, by \eqref{cru6} using the properties of Brouwer degree, we get
    \begin{equation}\label{cru7}\begin{aligned}
 & \mathrm {deg}(   (\mathcal S_1, \mathcal T_1),\dots,(\mathcal S_m, \mathcal T_m) , 
    \left(B(s_1^*,\rho)\times B(t_1^*,\rho)\right)\times\dots\times \left(B(s_m^*,\rho)\times B(t_m^*,\rho)\right),0)\\
    &=\mathrm {deg}(   (\mathcal S_1, \mathcal T_1),
    B(s_1^*,\rho)\times B(t_1^*,\rho) ,0)\times\dots\times\mathrm {deg}(   (\mathcal S_m, \mathcal T_m),
    B(s_m^*,\rho)\times B(t_m^*,\rho) ,0)\\
    &\not=0.
 \end{aligned}\end{equation}
 Combining \eqref{cru7} with \eqref{cru3} and \eqref{cru4} we deduce that if $\ep$ is small enough there exists  $(\bfs^\ep,\bft^\ep)$ such that
 $\nabla \mathcal F_\ep (\bfs^\ep,\bft^\ep)=0$. In particular, $\bfs^\ep=(s_1^\ep,\dots,s_m^\ep)\to (\zeta^*_1,\dots, \zeta^*_m)$ 
 as $\ep\to0.$
 That concludes the proof.
  \\
  
 Let us prove \eqref{cruciale}.
 
Using a smooth extension of the function $a(x)$ we can perform a Taylor expansion around every point $s_i$ to obtain that 
\begin{equation}\label{taylora(x)}
a(\zeta_i)=a(s _i) + |\ln(\ep)|^{-1}\,t _i \pp_{\nu}a(s _i) + \OO([|\ln(\ep)|^{-1}t _i]^2).
\end{equation}
On the other hand, from Corollary \ref{corolario} and the regularity in \eqref{integrabilityR} of the function $R(z)$ described in \eqref{ellipticequationR20}, we find that for any $\A\in (0,1)$ and any $\zeta_i$ as in \eqref{zetai},
   {\begin{equation}\label{expasionrobin}
H(\zeta_i,\zeta_i)=4\ln\left(\ln\left(\frac{1}{\ep}\right)\right)+4\ln\left(\frac{1}{2\,t_i}\right)
\,+\,{\rm z}(s_i)\,+\,\OO\left([|\ln(\ep)|^{-1}t_i]^{\A}\right).
\end{equation}}

On the other hand, since $R\in C^{\infty}(\R^2-\{0\})$, we can use expression \eqref{regularpartGreen}, Proposition \ref{asymptoticsof H} and the fact that $|s_i-s_j|\ge c>0$, to obtain the expansion 
\begin{equation}\label{boundedgreen}\begin{aligned}
G(\zeta_i,\zeta_j)&= - 4\ln\left(|s_i-s_j|\right)\,+\,\tilde{z}(s_i,s_j)
\\ &
+\frac{1}{\ln\left(\frac{1}{\ep}\right)}\left[4 t_j\,R(s_i-s_j)\cdot \left(D\gamma(s_j)\cdot \nu(s_j)\right)+ \nabla \tilde{z}(s_i,s_j)\cdot (t_i\nu(s_i),t_j\nu(s_j))\right]\\
&+  o\left(|\ln(\ep)|^{-1}[|t_i|+ |t_j|]\right).
\end{aligned}
\end{equation} 

Hence, from \eqref{interior}, putting together expressions \eqref{taylora(x)}, \eqref{expasionrobin} and \eqref{boundedgreen}, we get
$$\begin{aligned}
\mathcal F_0(\bfs,\bft)&=
 \sum_{i=1}^m\left[{\rm c}_0 + {\rm c}_1\ln\left(\ln\left(\frac{1}{\ep}\right)\right) +{\rm c}_2\ln\left(\frac{1}{\ep}\right)\right]a(s_i)\\ &
+16\pi\sum_{i=1}^m\left[\ln\left(t_i\right)a(s_i) + \left(1 + \frac{\ln\left(t_i\right)}{\ln\left(\frac{1}{\ep}\right)}\right) t_i\,\pp_{\nu}a(s_i)\right]\\ &
-4\pi\sum_{i=1}^m \left[ {\rm z}(s_i)  - \sum_{j\neq i} 4\ln\left(|s_i -s_j|\right) + \tilde{z}(s_i,s_j)\right]\left(a(s_i)\,+\,\frac{t_i}{\ln\left(\frac{1}{\ep}\right)}\pp_{\nu}a(s_i)\right)
\\ &+\OO\left(|\ln(\ep)|^{-\A}|\bft|^{\A}\right)\end{aligned}
$$
for some constants ${\rm c}_0,{\rm c}_1$ and $,{\rm c}_2>0$ and estimate \eqref{cruciale} follows.\\

 This concludes the proof of the Theorem.
\end{proof}
 
 \bigskip 
We next proceed with the proof of Theorem \ref{theo3}.

\bigskip
\begin{proof}[Proof of Theorem \ref{theo3}.]  
  By Proposition \ref{variational reduction} we have to find a critical point $\zzeta^\ep:=(\zeta_1^\ep,\dots,\zeta_m^\ep)\in\partial\Omega\times\dots\times\partial\Omega$ of the reduced energy $\mathcal F_\ep.$ By \eqref{boundary} we have that $\mathcal F_\ep$ reduces to 
  $$\mathcal F_\ep(\zzeta)=|\ln\ep|\left[8\pi\sum\limits_{i=1}^m a(\zeta_i)+o(1)\right]$$
  $C^1-$uniformly in $ \left\{(\zeta_1,\dots,\zeta_m)\in\pp\Omega\times\dots\times\pp\Omega\ :\  |\zeta_i-\zeta_j|\ge c\ \hbox{for any }\ i\not=j\right\}$. The claim follows from the fact that $a$ has $m$ different strict local maxima or local minima points  $\zeta^*_1,\dots,\zeta^*_m$ on the boundary $\pp\Omega$ which are stable under $C^1-$perturbation, using a degree argument as in the proof of Theorem \ref{theo2}. In particular, there exists a critical point $(\zeta_1^\ep,\dots,\zeta_m^\ep)$ of $\mathcal F_\ep$ such that each $\zeta_i^\ep\to \zeta^*_i$ for $i=1,\dots,m.$
\end{proof}

  \bigskip
We next proceed with the proof of Theorem \ref{theo4}.

\bigskip
\begin{proof}[Proof of Theorem \ref{theo4}.]  By Proposition \ref{variational reduction} we have to find a critical point $\zzeta^\ep:=(\zeta_1^\ep,\dots,\zeta_m^\ep)\in\partial\Omega\times\dots\times\partial\Omega$ of the reduced energy $\mathcal F_\ep.$ By \eqref{boundary} we have that $\mathcal F_\ep$ reduces to 
  $$\mathcal F_\ep(\zzeta)= 8\pi|\ln\ep| \sum\limits_{i=1}^m a(\zeta_i)-2\pi\sum_{j,i=1\atop j\neq i}^m a(\zeta_i) G(\zeta_i,\zeta_j)+o(1) $$
 $C^0-$uniformly in $ \left\{(\zeta_1,\dots,\zeta_m)\in\pp\Omega\times\dots\times\pp\Omega\ :\ |\zeta_i-\zeta_j|\ge c|\ln\ep|^{-\kappa}\ \hbox{for any }\ i\not=j\right\}$.

Arguing exactly as in Lemma 7.1 of \cite{WEIYEZHOU2} we prove that $\mathcal F_\ep$ has a local maximum point $(\zeta_1^\ep,\dots,\zeta_m^\ep)$ such that each $\zeta_i^\ep\to \zeta_0$ as $\ep\to0.$  
\end{proof}

\section{Appendix}
The following technical lemma is rather standard, but for the sake of completeness, we include the details of the proof.

\begin{lemma}\label{boundaryterm}
For every $x, \zeta \in \pp\Omega$, define
$g(x,\zeta)$ as
\begin{equation*}
g(x,\zeta):=
\left\{
\begin{array}{ccc}
\frac{\nu(x)\cdot(x-\zeta)}{|x -\zeta|^2},& \quad x \in \pp \Omega-\{\zeta\},\\
\\
\frac{1}{2}k_{\pp \Omega}(\zeta), & \quad x = \zeta,
\end{array}
\right.
\end{equation*}
where $k_{\pp \Omega}(y)$ is the signed curvature of $\pp \Omega$ at $y$. Then for every $\zeta \in 
\pp \Omega$, $g(\cdot,\zeta)\in  C^{\infty}(\pp \Omega)$. Even more, the mapping $\zeta \in \pp \Omega \mapsto g(\cdot, \zeta)$ 
belongs to $C^1(\pp \Omega; C^1(\pp \Omega))$. 
\end{lemma}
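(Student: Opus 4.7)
The only nontrivial issue is the removable singularity at $x=\zeta$; off the diagonal, $g$ is a smooth composition of smooth objects, so both the smoothness in $x$ and the $C^1$ dependence on $\zeta$ are automatic. The plan is therefore to work in a local arc-length parametrization $\gamma\colon(-\delta,\delta)\to\pp\Omega$ of $\pp\Omega$ around an arbitrary fixed base point, introduce the lifted function $\tilde g(s,t):=g(\gamma(s),\gamma(t))$ for $s\neq t$, and show that $\tilde g$ extends to a \emph{jointly} $C^\infty$ function on $(-\delta,\delta)\times(-\delta,\delta)$. Both assertions of the lemma then drop out: fixing $t$ gives $g(\cdot,\gamma(t))\in C^\infty$ locally near $\gamma(t)$, while the joint smoothness of $\tilde g$ yields the map $t\mapsto \tilde g(\cdot,t)\in C^1\bigl((-\delta,\delta);C^1((-\delta,\delta))\bigr)$; a partition-of-unity argument combined with the trivial off-diagonal behavior then globalizes to $\pp\Omega$.

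The crux is to factor out the singularity. Write $h(s,t):=\nu(\gamma(s))\cdot(\gamma(s)-\gamma(t))$ and $d(s,t):=|\gamma(s)-\gamma(t)|^2$. For $d$, one checks $d(s,s)=0$, $\pp_t d(s,s)=0$, and $\pp_{tt} d(s,s)=2|\gamma'(s)|^2=2$, so a parametric Hadamard lemma (the integral form of Taylor's theorem in the variable $t-s$) gives $d(s,t)=(s-t)^2\tilde d(s,t)$ with $\tilde d$ jointly smooth and $\tilde d(s,s)=1$. For the numerator, $h(s,s)=0$ is trivial and the crucial identity $\pp_t h(s,s)=-\nu(\gamma(s))\cdot\gamma'(s)=0$ follows from the orthogonality of the inner normal and the unit tangent; the same Hadamard argument produces $h(s,t)=(s-t)^2\tilde h(s,t)$ with $\tilde h$ jointly smooth. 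Since $\tilde d$ does not vanish near the diagonal, $\tilde g=\tilde h/\tilde d$ extends smoothly across $\{s=t\}$.

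It remains to identify the diagonal value. The integral representation yields $\tilde h(s,s)=\tfrac12\pp_{tt}h(s,s)=-\tfrac12\nu(\gamma(s))\cdot\gamma''(s)$, and the Frenet-type relation $\gamma''=\pm k_{\pp\Omega}\,\nu$ for an arc-length curve then gives $\tilde h(s,s)=\tfrac12 k_{\pp\Omega}(\gamma(s))$, so the extended $\tilde g$ matches the prescribed value $\tfrac12 k_{\pp\Omega}(\zeta)$ on the diagonal. The only place requiring real care is pinning down the sign in the Frenet identity against the inner-normal convention used throughout the paper; past that, the argument is mechanical, and the global statements on $\pp\Omega$ follow from standard partition-of-unity bookkeeping applied to finitely many local arc-length charts.
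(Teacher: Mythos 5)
Your proof is correct. The core singularity-removal device is the same as the paper's — parametrize $\partial\Omega$ locally, observe that both the numerator $\nu(x)\cdot(x-\zeta)$ and the denominator $|x-\zeta|^2$ vanish to second order at $x=\zeta$ (the former because $\nu\perp$ tangent), and factor out the quadratic vanishing via Taylor's theorem with integral remainder so that $g$ becomes a ratio of smooth functions with nonvanishing denominator. The difference is in how the two variables are handled: the paper fixes $\zeta$, normalizes it to the origin with $\nu(\zeta)=(0,1)$, writes the boundary as a graph $x_2={\rm p}(x_1)$, and carries out the one-variable factorization ${\rm p}(x_1)=x_1^2{\rm r}(x_1)$, ${\rm p}'(x_1)=x_1{\rm q}(x_1)$, identifying the diagonal value as $-\tfrac12{\rm p}''(0)$; the $C^1$ dependence on $\zeta$ is then only asserted by appeal to smoothness of the tangent bundle. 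You instead work in arc-length coordinates and prove a \emph{joint} two-variable Hadamard factorization $h(s,t)=(s-t)^2\tilde h(s,t)$, $d(s,t)=(s-t)^2\tilde d(s,t)$ with $\tilde d(s,s)=1$, which yields joint smoothness of $\tilde g=\tilde h/\tilde d$ across the diagonal and hence delivers the $C^1(\partial\Omega;C^1(\partial\Omega))$ statement directly rather than as an afterthought. That is a genuinely cleaner treatment of the second assertion of the lemma; the price is only the Frenet sign bookkeeping you already flag, which must be matched to whatever convention makes the paper's value $-\tfrac12{\rm p}''(0)=\tfrac12 k_{\partial\Omega}$ hold.
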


\begin{proof} Recall that $\pp \Omega$ is smooth. Let $\zeta \in \pp \Omega$ be arbitrary. After a translation and a rotation, if necessary, we may assume that $\zeta=(0,0)\in \R^2$ and that $\nu(\zeta)=(0,1)$. Hence, there exists $R>0$, $\D>0$ small and a function ${\rm p}: (-\D,\D)\to \R$ satisfying 
$$
{\rm p}\in C^{\infty}(-\D,\D), \quad {\rm p}(0)=0, \quad {\rm p}'(0)=0
$$ 
and such that   
$$
\Omega \cap B_R(\zeta) =\{(x_1,x_2)\,:\, -\D<x_1< \D, \quad {\rm p}(x_1)<x_2\}\cap B_{R}(0,0).
$$

\medskip
The inner unit normal vector in $\pp \Omega \cap B_R(\zeta)$ is computed as
$$
\nu(x_1,{\rm p}(x_1))=\frac{(-{\rm p}'(x_1),1)}{\sqrt{1 +|{\rm p}'(x_1)|^2 }}, \quad x_1 \in (-\D,\D),
$$
so that  for $x_1\in (-\D,\D)$
\begin{equation}\label{smoothnessboundarycondition}
g(x_1,{\rm p}(x_1),\zeta)=\frac{1}{\sqrt{1 +|{\rm p}'(x_1)|^2 }}\cdot
\frac{{\rm p}(x_1) - x_1\,{\rm p}'(x_1)}{x_1^2 + {\rm p}^2(x_1)}.
\end{equation}

Since ${\rm p}$ is smooth, we use Taylor's expansion together with  the fact that ${\rm p}(0)={\rm p}'(0)=0$ to find that
\begin{equation}\label{redefinitionbdcondregularpart}
{\rm p}(x_1) = x_1^2\int_{0}^1(1-t) {\rm p}''(t\, x_1)dt\,=\,x_1^2 \,{\rm r}(x_1), \quad \quad {\rm p}'(x_1)=x_1\int_0^1 {\rm p}''(t\,x_1)dt\,=\, x_1\,{\rm q}(x_1),
\end{equation}
where
$$
{\rm r}(x_1):=\int_{0}^1(1-t) {\rm p}''(t\, x_1)dt, \quad \quad {\rm q}(x_1):=\int_0^1 {\rm p}''(t\,x_1)dt, \quad \quad  \hbox{for } \quad -\D<x_1<\D.
$$

\medskip 
Putting together \eqref{smoothnessboundarycondition} and \eqref{redefinitionbdcondregularpart} 
we obtain
\begin{equation}\label{eqrmpsmooth}
\frac{{\rm p}(x_1) - x_1\,{\rm p}'(x_1)}{x_1^2 + {\rm p}(x_1)^2}=\frac{{\rm r}(x_1) - {\rm q}(x_1)}{1 + x_1^2\,{\rm r}^2(x_1)}
\end{equation}
which is a smooth function in $(-\D,\D)$.

\medskip
By succesive differentiation of the identity in \eqref{eqrmpsmooth}, we conclude that $g(x,\zeta)$ is smooth and also notice that 
$$
\lim_{x_1\to 0}g((x_1,p(x_1)),\zeta)=-\frac{1}{2}{\rm p}''(0)=\frac{1}{2}k_{\pp \Omega}(0,0).
$$ 
Finally from the smoothness of the tangent bundle of $\pp \Omega$, $T(\pp \Omega)$, we obtain that $\zeta \mapsto g(\cdot,\zeta)$ belongs to $C^1(\pp \Omega; C^1(\pp \Omega))$
and this concludes the proof of the lemma. Using
\end{proof}

\medskip

{\bf Acknowledgments: } The research of the first author was supported by the Grant 13-00863S of the Grant Agency of the Czech Republic. 
The research of the second author was partially supported by GNAMPA.

\end{document}